\newcommand{\operator}[0]{\mathcal}
\newcommand{\grad}[0]{\nabla}
\newcommand{\vect}[1]{\boldsymbol{\mathbf{#1}}}
\newcommand{\hermite}[1]{#1}
\newcommand{\hermitef}[1]{#1}
\newcommand{\expect}[0]{\mathbf{E}}
\newcommand{\proj}[2]{\pi\left(#1,#2\right)}
\newcommand{\seq}[4]{{\left\{#1_{#2}\right\}}_{#2=#3}^{#4}}
\DeclareMathOperator{\trace}{tr}
\DeclareMathOperator{\Span}{span}
\newcommand{\nat}[0]{\mathbf{N}}
\newcommand{\real}[0]{\mathbf{R}}
\newcommand{\poly}[1]{\mathbf{P}_{#1}}
\newcommand{\cont}[2]{C^{#1} (#2)}
\newcommand{\smooth}[1]{C^\infty(#1)}
\newcommand{\test}[1]{C^\infty_c (#1)}
\newcommand{\schwartz}[1]{S (#1)}
\newcommand{\abs}[1]{\left|#1\right|}
\DeclareDocumentCommand\sobolev{m m o} {H^{#1}\left(#2 \IfNoValueF{#3}{,#3}\right)}
\DeclareDocumentCommand\lp{m m o} {L^{#1}\left(#2 \IfNoValueF{#3}{,#3}\right)}
\DeclareDocumentCommand\gaussian{O{0} O{I}} {G_{(#1, #2)}}
\DeclareDocumentCommand\norm{m o o} {\|#1\|\IfNoValueF{#2}{_{#2 \IfNoValueF{#3}{,#3}}}}
\DeclareDocumentCommand\seminorm{m o o} {\left|#1\right|\IfNoValueF{#2}{_{#2 \IfNoValueF{#3}{,#3}}}}
\DeclareDocumentCommand\ip{m m o o} {\langle{#1,#2}\rangle\IfNoValueF{#3}{_{#3 \IfNoValueF{#4}{,#4}}}}
\DeclareDocumentCommand\dup{m m o} {\left\langle{#1,#2}\right\rangle\IfNoValueF{#3}{_{#3', #3}}}
\newcommand{\TheTitle}{Spectral methods for multiscale stochastic differential equations}
\newcommand{\email}[1]{\href{#1}{#1}}
\title{{\TheTitle}}
\author{
  A. Abdulle\thanks{Mathematics Section, École Polytechnique Fédérale de Lausanne
    (\email{assyr.abdulle@epfl.ch}).}
  \and
  G.A. Pavliotis\thanks{Department of Mathematics, Imperial College London
   (\email{g.pavliotis@imperial.ac.uk}).}
  \and
  U. Vaes\thanks{Department of Mathematics, Imperial College London
    (\email{u.vaes13@imperial.ac.uk}).}
}
\theoremstyle{plain}
\newtheorem{theorem}{Theorem}[section]
\newtheorem{corollary}[theorem]{Corollary}
\newtheorem{lemma}[theorem]{Lemma}
\newtheorem{proposition}[theorem]{Proposition}
\theoremstyle{definition}
\newtheorem{definition}[theorem]{Definition}
\theoremstyle{remark}
\newtheorem{remark}[theorem]{Remark}
\newtheorem{assumption}{Assumption}[section]
\crefname{assumption}{Assumption}{Assumptions}
\crefname{section}{Section}{Sections}
\begin{document}

\maketitle
\begin{abstract}
    This paper presents a new method for the solution of multiscale stochastic differential equations at the diffusive time scale.
    In contrast to averaging-based methods, e.g., the heterogeneous multiscale method (HMM) or the equation-free method, which rely on Monte Carlo simulations,
    in this paper we introduce a new numerical methodology that is based on a spectral method.
    In particular, we use an expansion in Hermite functions to approximate the solution of an appropriate Poisson equation,
    which is used in order to calculate the coefficients of the homogenized equation.
    Spectral convergence  is proved under suitable assumptions.
    Numerical experiments corroborate the theory and illustrate the performance of the method.
    A comparison with the HMM and an application to singularly perturbed stochastic PDEs are also presented.
\end{abstract}

\vspace{0.5cm}
{\bf Keywords:} Spectral methods for differential equations, Hermite spectral methods, singularly perturbed stochastic differential equation, multiscale methods, homogenization theory, stochastic partial differential equations.

\vspace{0.5cm}
{\bf AMS:} 
  65N35, 
  65C30, 
  60H10 
  60H15 

\setstretch{1.15}
\section{Introduction}
\label{sec:introduction}

Multiscale stochastic systems arise frequently in applications.
Examples include atmosphere/ocean science~\cite{MTV01} and materials science~\cite{weinan2011}.
For systems with a clear scale separation it is possible, in principle,
to obtain a closed---averaged or homogenized---equation for the slow variables~\cite{pavliotis2008multiscale}.
The calculation of the drift and diffusion coefficients that appear in this effective (coarse-grained) equation
requires appropriate averaging over the fast scales.
Several numerical methods for multiscale stochastic systems that are based on scale separation and
on the existence of a coarse-grained equation for the slow variables have been proposed in the literature.
Examples include the heterogeneous multiscale method (HMM)~\cite{vanden2003fast,weinan2005analysis,abdulle2012heterogeneous}
and the equation-free approach~\cite{kevrekidis2003equation}.
These techniques are based on evolving the coarse-grained dynamics,
while calculating the drift and diffusion coefficients ``on-the-fly'' using short simulation bursts of the fast dynamics.

A prototype fast/slow system of stochastic differential equations (SDEs) for which the aforementioned techniques can be applied is \footnote{
    In this paper we will consider the fast/slow dynamics at the diffusive time scale, or,
    using the terminology of~\cite{pavliotis2008multiscale}, the homogenization problem.
}
\begin{subequations}
    \label{eq:prototype_general_fast-slow_system}
    \begin{equation}\label{e:slow}
        d{X}_t^{\varepsilon} = \frac{1}{\varepsilon} \vect f({X}_t^{\varepsilon},{Y}_t^{\varepsilon}) \, dt + \sqrt 2 \, \vect \sigma_x \,d{W}_{xt},
    \end{equation}
    \begin{equation}\label{e:fast}
        d{Y}_t^{\varepsilon} = \frac{1}{\varepsilon^2} \vect h({X}_t^{\varepsilon},{Y}_t^{\varepsilon}) \,dt + \frac{\sqrt 2}{\varepsilon}\vect \sigma_y\,d{W}_{yt}.
    \end{equation}
\end{subequations}
where
$X_t^\varepsilon \in \real^m$, $Y_t^\varepsilon \in \real^n$,
$\varepsilon \ll 1$ is the parameter measuring scale separation,
$\vect \sigma_x \in \real^{m \times d_1}$, $\vect \sigma_y \in \real^{n \times d_2}$ are constant matrices,
and $W_x$, $W_y$ are independent $d_1$ and $d_2$-dimensional Brownian motions, respectively.\footnote{
    It is straightforward to consider problems where the Brownian motions driving the fast and slow processes are correlated.
    This scenario might be relevant in applications to mathematical finance. See e.g. \cite{MR2738765}.
}
For fast-slow systems of this form, a direct numerical approximation of the full dynamics would be prohibitively expensive,
because resolving the fine scales would require a time step $\delta t$ that scales as $\mathcal O(\varepsilon^2)$.
Under appropriate assumptions on the coefficients and on the ergodic properties of the fast process $Y_t^{\varepsilon}$,
it is well known that the slow process converges, in the limit as $\varepsilon$ tends to $0$,
to a homogenized equation that is independent of the fast process and of $\varepsilon$~\cite[Ch. 11]{pavliotis2008multiscale}:
\begin{equation}
    \label{eq:simplified_equation_for_general_prototype}
    dX_t = \vect F(X_t) \, dt + \vect A(X_t) \, dW_t.
\end{equation}
The drift and diffusion coefficients in~\eqref{eq:simplified_equation_for_general_prototype} can be calculated by
solving a Poisson equation involving the generator of the fast process,\footnote{
    We are assuming that the centering condition is satisfied, see \cref{eq:assumption_f_bounded_polynomial} below.
}
\begin{equation}
    \label{eq:poisson_equation_general_prototype}
     - \mathcal L_y \vect \phi = \vect f,
\end{equation}
where $\mathcal L_y = \vect h(x,y) \cdot \nabla_y + \vect \sigma_y^2 \Delta_y$, together with appropriate boundary conditions,
and calculating averages with respect to the invariant measure $\mu_x(dy)$ of $Y_t^{\varepsilon}$:
\begin{subequations}
    \label{eq:effective_drift_and_diffusion}
    \begin{align}
        \label{eq:effective_drift}
        \vect F(x) &= \int_{\real^n} \grad_x{\vect \phi(x,y)} \,\vect f(x,y) \,\mu_x(dy),\\
        \label{eq:effective_diffusion}
        \vect A(x)\vect A(x)^T &= \int_{\real^n} \left[\vect f(x,y) \otimes \vect \phi(x,y) + \vect \phi(x,y) \otimes \vect f(x,y)\right] \, \mu_x(dy).
    \end{align}
\end{subequations}
Once the drift and diffusion coefficients have been calculated,
then it becomes computationally advantageous to solve the homogenized equations,
in particular since we are usually interested in the evolution of observables of the slow process alone.
The main computational task, thus, is to calculate
the drift and diffusion coefficients that appear in the homogenized equation~\eqref{eq:simplified_equation_for_general_prototype}.
When the state space of the fast process is high dimensional,
the numerical solution of the Poisson equation
and calculation of the integrals in~\eqref{eq:poisson_equation_general_prototype} using deterministic methods
become prohibitively expensive and Monte Carlo-based approaches have to be employed.
In recent years different methodologies have been proposed
for the numerical solution of the fast-slow system~\eqref{eq:prototype_general_fast-slow_system}
that are based on the strategy outlined above,
for example the Heterogeneous Multiscale Method (HMM)~\cite{vanden2003fast,weinan2005analysis,abdulle2012heterogeneous}
and the equation-free approach~\cite{kevrekidis2003equation}.
In particular, the PDE-based formulas~\eqref{eq:effective_drift_and_diffusion} are replaced by
Green-Kubo type formulas~\cite[Sec. 1]{weinan2005analysis} that involve time averages and numerically calculated autocorrelation functions.
The equivalence between the homogenization and the Green-Kubo formalism
has been shown for a quite general class of fast/slow systems of SDEs \cite{MR2740040}. See also \cite{MR3463433,MR3509213}.
While offering several advantages, time and ensemble averages, on which these methods are based,
imply that accurate solutions are computationally very expensive to obtain.
Based on the analysis of~\cite{weinan2005analysis}, one deduces that
the computational cost needed to obtain an error of order $2^{-p}$ scales as $\mathcal O(2^{p(2 + 1/l)})$,
where $l$ is the weak order of accuracy of the micro-solver used.

When the dimension of the state space of the fast process is relatively low,
numerical approaches that are based on the accurate and efficient numerical solution of the Poisson equation~\eqref{eq:poisson_equation_general_prototype} using ``deterministic'' techniques become preferable.
This is particularly the case when the structure of the fast-slow system~\eqref{eq:prototype_general_fast-slow_system} is such that
spectral methods can be applied in a straightforward manner.
Such an approach was taken in \cite{goudonefficient} for the study of
the diffusion approximation of a kinetic model for swarming~\cite{carillo2010}.
In dimensionless variables, the equation for the distribution function
$f^{\varepsilon}(x,v,t)$ reads
\begin{equation}
    \label{eq:kinetic_equation}
    \frac{\partial f^{\varepsilon}}{\partial t} + \frac{1}{\sqrt{\varepsilon}} (v \cdot \nabla_r f^{\varepsilon}
    - \nabla_r \Psi \cdot \nabla_v f^{\varepsilon}) = \frac{1}{\varepsilon}Q(f^{\varepsilon}),
\end{equation}
where
$\Psi$ is a potential that is defined self-consistently through the solution of a Poisson equation,
$Q(\cdot)$ denotes a linearized ``collision'' operator,
with the appropriate number and type of collision invariants.
It was shown in~\cite{goudonefficient} that in the limit as $\varepsilon$ tends to $0$,
the spatial density $\rho(x,t) = \int f(x,v,t) \, dv$ of swarming particles converges to
the solution of an aggregation-diffusion equation of the form
\begin{equation}
    \label{eq:agreggation-diffusion_swarming}
    \frac{\partial \rho}{\partial t} - \nabla \cdot (\mathcal D \nabla \rho + \mathcal K (\nabla U \star \rho)\rho) = 0,
\end{equation}
where  $\star$ denotes the convolution product,
$U$ is the interaction potential, and the drift and diffusion tensors $\mathcal K$ and $\mathcal D$,
respectively, can be calculated using an approach identical to~\eqref{eq:poisson_equation_general_prototype} and~\eqref{eq:effective_drift_and_diffusion}:
we first have to solve the Poisson equations\footnote{
    We first perform a unitary transformation that maps
    the generator of a diffusion process of the form $\mathcal L_y$ that appears in~\eqref{eq:poisson_equation_general_prototype}
    to an appropriate Schr\"odinger-type operator; see~\cite[Sec. 4.9]{pavliotis2011applied} for details.
}
\begin{equation}
    \label{eq:poisson_swarming}
    - \operator Hu_\chi = v\sqrt{M} \quad \text{ and } \quad
    - \operator Hu_\kappa = \frac{1}{\theta} \nabla_v W \sqrt{M},
\end{equation}
where $W(\cdot)$ is a potential in velocity,
$M(v) = Z^{-1} e^{-W(v)/\theta}$ is the Maxwellian distribution at temperature $\theta$,
with $Z$ being the normalization constant,
$\mathcal H = - \theta \Delta_v + \Phi(v)$ and
\begin{equation}
    \label{eq:potential_velocity_swarming}
    \Phi(v) = -\frac{1}{2} \Delta_v W(v) + \frac{1}{4 \theta} \abs{\nabla_v W(v)}^2.
\end{equation}
Then the effective coefficients can be calculated by the integrals
\begin{equation}
    \label{eq:drift_and_diffusion_swarming}
    \mathcal D = \int_{\real^d} \mathcal H (u_\chi) \otimes u_\chi \, dv \quad \text{ and } \quad
    \mathcal K = \int_{\real^d} \mathcal H (u_\chi) \otimes u_\kappa \, dv.
\end{equation}
We note that the operator $\mathcal H$ that appears in~\eqref{eq:poisson_swarming} is
a Schr\"{o}dinger operator whose spectral properties are very well understood~\cite{reed1978analysis, HislopSigal1996}.
In particular, under appropriate growth assumptions on the potential $\Phi$ given in~\eqref{eq:potential_velocity_swarming},
the operator $\mathcal H$ is essentially selfadjoint, has discrete spectrum and
its eigenfunctions form an orthonormal basis in $\lp{2}{\real^d}$.
The computational methodology that was introduced and analyzed in~\cite{goudonefficient}
for calculating the homogenized coefficients in~\eqref{eq:agreggation-diffusion_swarming}
is based on the numerical calculation of the eigenvalues and eigenfunctions of the Schr\"{o}dinger operator
using a high-order finite element method.
It was shown rigorously and by means of numerical experiments that
for sufficiently smooth potentials the proposed numerical scheme performs extremely well;
in particular, the numerical calculation of the first few eigenvalues and eigenfunctions of $\mathcal H$
are sufficient for the very accurate calculation of the drift and diffusion coefficients given in~\eqref{eq:drift_and_diffusion_swarming}.

In this paper we develop further the methodology introduced in~\cite{goudonefficient} and
we apply it to the numerical solution of fast/slow systems of SDEs,
including singularly perturbed stochastic partial differential equations (SPDEs) in bounded domains.
Thus, we complement the work presented in~\cite{abdulle2012numerical},
in which a hybrid HMM/spectral method for the numerical solution of singularly perturbed SPDEs with quadratic nonlinearities~\cite{blomker2007} at the diffusive time scale was developed.\footnote{
    When the centering condition (see Equation~\eqref{eq:assumption_f_bounded_polynomial}) is not satisfied,
    one needs to study the problem at a shorter time scale (called the advective time scale).
    This problem is easier to study since it does not require the solution of a Poisson equation.
    The rigorous analysis of the HMM method for singularly perturbed SPDEs at the advective time scale was presented in~\cite{brehier2013}.
}
The main difference between the methodology presented in~\cite{goudonefficient} and the approach we take in this paper is that,
rather than obtaining the orthonormal basis by solving the eigenvalue problem for an appropriate Schr\"odinger operator,
we fix the orthonormal basis (Hermite functions) and expand the solution of the Poisson equation~\eqref{eq:poisson_equation_general_prototype}
(after the unitary transformation that maps it to an equation for a Schr\"{o}dinger operator) in this basis.
We show rigorously and by means of numerical experiments that our proposed methodology achieves spectral convergence
for a wide class of fast processes in~\eqref{eq:prototype_general_fast-slow_system}.
Consequently, our method outperforms Monte Carlo-based methodologies such as the HMM and the equation-free method,
at least for problems with low-dimensional fast processes.
We discuss how our method can be modified so that
it becomes efficient when the fast process has a high-dimensional state space in the
conclusions section, \cref{sec:conclusion_and_perspectives_for_future_work}.

In this paper we will consider fast/slow systems of SDEs for which the fast process is reversible,
i.e. it has a gradient structure~\cite[Sec. 4.8]{pavliotis2011applied}\footnote{
    We could, in principle, also consider reversible SDEs with a diffusion tensor that is not a multiple of the identity.
}
\begin{subequations}
    \label{eq:multiscale_with_gradient_structure}
    \begin{eqnarray}
        \label{e:x}
        dX_t^\varepsilon & = & \frac{1}{\varepsilon} \vect f(X_t^\varepsilon,Y_t^\varepsilon)dt
        + \vect \alpha(X_t^\varepsilon,Y_t^\varepsilon)\,dW_{xt}, \quad X_0^\varepsilon = x_0, \\
        \label{e:y}
        dY_t^\varepsilon  & = & -\frac{1}{\varepsilon^2} \grad V(Y_t^\varepsilon)dt
        + \frac{\sqrt{2}}{\varepsilon} d W_{yt}, \quad Y_0^\varepsilon = y_0,
    \end{eqnarray}
\end{subequations}
where
$X_t^\varepsilon(t) \in \real^{m}$, $Y_t^\varepsilon(t)\in\real^{n}$,
$\vect \alpha(\cdot,\cdot) \in \real^{m\times p}$,
$W_{x}$ and $W_{y}$ are standard $p$ and $n$-dimensional Brownian motions,
and
$V(\cdot)$ is a smooth confining potential.
SDEs of this form appear in several applications, e.g. in molecular dynamics~\cite{DuncanKalliadasisPavliotisPradas2016,MR2681239}.
Furthermore, several interesting semilinear singularly perturbed SPDEs can be written in this form, see \cref{sec:numerics}.
It is well known~\cite[Sec. 4.9]{pavliotis2011applied} that
the generator of a reversible SDE is unitarily equivalent to an appropriate Schr\"{o}diner operator.
Consequently, the calculation of the drift and diffusion coefficients in the homogenized equation corresponding to~\eqref{eq:multiscale_with_gradient_structure} reduces to
a problem that is very similar to~\eqref{eq:poisson_swarming} and~\eqref{eq:drift_and_diffusion_swarming}.
Our approach is to first solve this Poisson equation for the Schr\"odinger operator via a spectral method using Hermite functions and then
use this solution in order to calculate the integrals in~\eqref{eq:effective_drift_and_diffusion}.
For smooth potentials that increase sufficiently fast at infinity our method has spectral accuracy,
i.e. the error decreases faster than any negative power of the number of floating point operations performed.
This, in turn, via a comparison for SDEs argument, implies that
we can approximate very accurately the evolution of observables of the slow variable $X_t^{\varepsilon}$ in~\eqref{eq:multiscale_with_gradient_structure}
by solving an approximate homogenized equation in which the drift and diffusion coefficients are calculated using our spectral method.
For relatively low dimensional fast-processes,
this leads to a much more accurate and computationally efficient numerical method than any Monte Carlo-based methodology.
We remark that our proposed numerical methodology becomes (analytically) exact when
the fast process is, to leading order, an Ornstein-Uhlenbeck process,
since in this case, for a suitable choice of the mean and the covariance matrix,
the Hermite functions are the eigenfunctions of the corresponding Schr\"odinger operator.

The rest of the paper is organized as follows.
In \cref{sec:diffusion_approximation}, we summarize the results from homogenization theory for the fast/slow system~\eqref{eq:multiscale_with_gradient_structure} that we will need in this work.
In \cref{sec:numerical_method} we present our numerical method in an algorithmic manner.
In \cref{sec:main_results}, we summarize the main theoretical results of this paper;
in particular we show that our method, under appropriate assumptions on the coefficients of the fast/slow system, is spectrally accurate.
The proofs of our main results are given in \cref{sec:analysis_of_the_method}.
In \cref{sec:numerics} we present details on the implementation of our numerical method,
discuss the computational efficiency and present several numerical examples,
including an example of the numerical solution of a singularly perturbed SPDE;
for this example, we also present a brief qualitative comparison of our method with the HMM method.
\Cref{sec:conclusion_and_perspectives_for_future_work} is reserved for conclusions and discussion of further work.
Finally in the appendices we present some results related to approximation theory in weighted Sobolev spaces
that are needed in the proof of the main convergence theorem.


\section{Diffusion Approximation and Homogenization}
\label{sec:diffusion_approximation}
In this section, we summarize some of our working hypotheses and the results from the theory of homogenization used to derive the effective SDE for the system \eqref{eq:multiscale_with_gradient_structure}.
Throughout this paper, the notation $\abs{\cdot}$ denotes the Euclidian norm when applied to vectors, and the Frobenius norm when applied to matrices.
In addition, for a vector $\vect{v} \in \real^d$, the components are denoted by $v_1, v_2 \cdots, v_d$.
We start by assuming that $V(\cdot)$ is a smooth confining potential,~\cite[Definition 4.2]{pavliotis2011applied}:
\begin{equation}\tag{$H_V$}
    \label{eq:assumptions_potential}
    V \in \smooth{\real^n}, \qquad \lim_{\abs{y} \to \infty} V(y) = \infty
    \quad \text{ and } \quad e^{-V(\cdot)} \in \lp{1}{\real^n}.
\end{equation}
These hypotheses guarantee that the fast process has a well defined solution for all positive times,
with a unique invariant measure whose density is given by $\frac{1}{\mathcal Z} e^{-V(y)}$,
where $\mathcal Z$ is the normalization constant.
Without loss of generality, we may assume that $\mathcal Z = 1$.
To these assumptions, we add
\begin{equation}\tag{$H_W$}
    \label{eq:assumption_w}
    \lim_{\abs{y}\to \infty} \grad{V}\cdot y = \infty \quad \text{ and } \quad
    \lim_{\abs{y}\to \infty} W(y) := \lim_{\abs{y}\to \infty} \left( \frac{1}{4} |\grad V(y)|^2 \,-\, \frac{1}{2}\Delta V(y) \right)= \infty,
\end{equation}
which guarantee that the law of $y(t)$ converges to its invariant distribution $e^{-V}$ exponentially fast (e.g. in relative entropy), see \cite{markowich2000}.
We assume furthermore that the drift coefficient in the slow equation of system \eqref{eq:multiscale_with_gradient_structure} satisfies
\begin{equation}\tag{$H_f$}
    \begin{aligned}
        &\vect f(x,y) \in \left(\smooth{\real^m \times \real^n}\right)^m,\\
        &\int_{\real^n} \vect f(x,y) \,  e^{-V(y)} \,dy = \vect 0,\text{ and }\\
        & \abs{\vect f(x,y)} \leq p(y) \quad \forall  x \in \real^m \text{ and }  \forall y \in \real^n,
    \end{aligned}
    \label{eq:assumption_f_bounded_polynomial}
\end{equation}
where $p(\cdot)$ is a polynomial.
Under Assumptions~\eqref{eq:assumptions_potential} and~\eqref{eq:assumption_f_bounded_polynomial},
the uniform ellipticity of the generator of the fast dynamics and \cite[Theorem 1]{pardoux2001poisson}
ensure that there exists for all $x\in \real^m$ a solution that is smooth in $y$ of the Poisson equations:
\begin{equation}
    \label{eq:poisson_equation}
    -\operator{L} \phi_i(x,y) := -\left( \Delta_y - \grad_y{V}\cdot \nabla_y \right) \phi_i(x,y)= f_i(x,y) \quad \text{ for } i = 1, \dots, m.
\end{equation}
The difference in sign was adopted to lighten the notation in the analysis presented in \cref{sec:analysis_of_the_method}.
We consider solutions that are locally bounded and grow at most polynomially in $y$.
The solution to the Poisson equations~\eqref{eq:poisson_equation} are unique, up to constants.
Without loss of generality, we can set these constants to be equal to $0$:
\begin{equation}
    \label{eq:mean_zero_solution_poisson_equation}
    \int_{\real^n} \vect \phi(x,y) \,  e^{-V(y)} \,dy = \vect 0, \quad \forall x \in \real^m.
\end{equation}
In addition to the previous assumptions,
we add the following assumption on the Lipschitz continuity with respect to $x$ of the coefficients.
\begin{equation}\tag{$H_L$}
    \abs{\vect f(x,y) - \vect f(x',y)} + \abs{\vect \alpha(x,y) - \vect \alpha(x',y)} \leq C(y) \abs{x - x'},
    \label{eq:assumption_lip}
\end{equation}
and the following assumptions on the growth of the coefficients:
\begin{equation}\tag{$H_G$}
    \begin{aligned}
        &\abs{\vect f(x,y)} \leq K(1+|x|)(1+|y|^{m_1}),\\
        &\abs{\nabla_x \vect f(x,y)} + \abs{\nabla_x^2 \mathbf f(x,y)} \leq K(1+\abs{y}^{m_2}),\\
        & \abs{\vect \alpha(x,y)} \leq K(1+|x|^{1/2})(1 + \abs{y}^{m_3}), \\
    \end{aligned}
    \label{eq:assumption_growth}
\end{equation}
for positive integers $m_1,m_2,m_3$ and a positive constant $K$.
It follows from this that $\vect \phi(\cdot,y)$ belongs to $\left(\cont{2}{\real^m}\right)^m$ for all values of $y$.
This can be shown by using the Feynman-Kac representation of the solution of~\eqref{eq:poisson_equation}
that was studied in \cite{pardoux2001poisson}:
\begin{equation}
    \label{eq:feynman-kac_formula}
    \phi_i(x,y) = \int_{0}^\infty \expect_y f_i(x,z^y_t) \, dt, \qquad i = 1,\dots,m,
\end{equation}
where $z^y_t$ is the solution of
\begin{equation*}
    dz^y_t = -\grad_y{V}(z_t^y) \, dt + \sqrt{2}\, dW_t \quad \text{ with } \quad z^y_0 = y.
\end{equation*}
Using the Feynman-Kac formula~\eqref{eq:feynman-kac_formula}, one can show~\cite[p. 1073]{pardoux2001poisson} that there exist $L,q > 0$ such that:
\begin{equation}
    \begin{aligned}
        & \abs{\vect \phi(x,y)} + \abs{\nabla_y \vect \phi(x,y)} \leq L(1 + \abs{x})(1 + \abs{y}^q), \\
        & \abs{\grad_x\vect \phi(x,y)} + \abs{\grad_y\grad_x \vect \phi(x,y)} + \abs{\grad_x \grad_x \vect \phi(x,y)} + \abs{\grad_y\grad_x \grad_x \vect \phi(x,y)} \leq L(1 + \abs{y}^q).
    \end{aligned}
    \label{eq:bounds_derivatives_phi}
\end{equation}
Using the previous assumptions we can prove the following homogenization/diffusion approximation result \cite[Theorem 3]{pardoux2001poisson}.
\begin{theorem}
    Let~\cref{eq:assumptions_potential,eq:assumption_w,eq:assumption_f_bounded_polynomial},~\eqref{eq:assumption_lip} and~\eqref{eq:assumption_growth} be satisfied.
    Then for any $T>0$,
    the family of processes $\left\{X^\varepsilon_t, 0 \leq t \leq T\right\}$ solving~\eqref{eq:multiscale_with_gradient_structure}
    is weakly relatively compact in $\left(\cont{}{[0,T]}\right)^m$.
    Any accumulation point $X_t$ is a solution of the martingale problem associated to the operator:
    \[
        \operator{G} = \frac{1}{2}\vect D(x):\grad_y \grad_y + \vect F(x)\cdot \grad_x
    \]
    where
    \begin{equation}
        \vect F(x) = \int_{\real^n} \grad_x{\vect \phi(x,y)} \, \vect f(x,y) \,e^{-V(y)}\,dy,
        \label{eq:definition_of_F}
    \end{equation}
    and
    \begin{equation}
        \vect D(x) = \int_{\real^n} \left(\vect \alpha(x,y) \vect \alpha(x,y)^T + \vect f(x,y) \otimes \vect \phi(x,y) + \vect \phi(x,y) \otimes \vect f(x,y)\right) \,  e^{-V(y)}\,dy,
        \label{eq:definition_of_D}
    \end{equation}
    where $\vect \phi(x,y)$ is the centered solution of the Poisson equation~\eqref{eq:poisson_equation}.
    If, moreover, the martingale problem associated to $\operator{G}$ is well-posed, then $X_t^\varepsilon \Rightarrow X_t$ (convergence in law),
    where $X_t$ is the unique diffusion process (in law) with generator $\operator{G}$.
\end{theorem}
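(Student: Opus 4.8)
The natural strategy is the martingale problem together with the perturbed test function (corrector) method, the point being that the required corrector is precisely the solution $\vect\phi$ of the Poisson equation~\eqref{eq:poisson_equation}. I would write the generator of the joint process $(X^\varepsilon_t,Y^\varepsilon_t)$ as
\[
    \operator L^\varepsilon = \frac{1}{\varepsilon^2}\operator L_0 + \frac{1}{\varepsilon}\operator L_1 + \operator L_2,
\]
where $\operator L_0 = \Delta_y - \grad_y V\cdot\grad_y$ generates the fast dynamics, $\operator L_1 = \vect f\cdot\grad_x$ couples the scales, and $\operator L_2 = \tfrac12\,\vect\alpha\vect\alpha^T:\grad_x\grad_x$ is the slow diffusion. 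The Poisson equation~\eqref{eq:poisson_equation} furnishes the algebraic identity $\operator L_0\vect\phi = -\vect f$ that drives the entire argument.

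First I would prove tightness, the only difficulty being the singular $\varepsilon^{-1}\vect f$ drift, which I remove with the corrector by setting $M^\varepsilon_t := X^\varepsilon_t + \varepsilon\,\vect\phi(X^\varepsilon_t,Y^\varepsilon_t)$. Applying Itô's formula and using $\operator L_0\vect\phi = -\vect f$, the contribution $\varepsilon^{-2}\operator L_0(\varepsilon\vect\phi) = -\varepsilon^{-1}\vect f$ cancels the singular drift of $X^\varepsilon_t$ exactly, leaving
\[
    dM^\varepsilon_t = \left(\operator L_1\vect\phi + \varepsilon\operator L_2\vect\phi\right)dt + \left(\vect\alpha + \varepsilon\,\grad_x\vect\phi\,\vect\alpha\right)dW_{xt} + \sqrt2\,\grad_y\vect\phi\,dW_{yt},
\]
so that $M^\varepsilon_t$ has $\operator O(1)$ drift and diffusion. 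The growth bounds~\eqref{eq:bounds_derivatives_phi}, together with uniform-in-$\varepsilon$ polynomial moment bounds on $Y^\varepsilon_t$ coming from the confining assumption~\eqref{eq:assumption_w} and the ergodicity of the fast process, then yield a Kolmogorov-type estimate $\expect\abs{M^\varepsilon_t - M^\varepsilon_s}^4 \leq C\abs{t-s}^2$ and hence weak relative compactness of $\{M^\varepsilon\}$ in $(\cont{}{[0,T]})^m$. Since $\sup_{t\leq T}\varepsilon\abs{\vect\phi(X^\varepsilon_t,Y^\varepsilon_t)}\to 0$ in probability by the same bounds, the family $\{X^\varepsilon\}$ is tight as well.

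Next I would identify every accumulation point with the martingale problem for $\operator G$. For $g\in\test{\real^m}$ I seek $g^\varepsilon = g + \varepsilon g_1 + \varepsilon^2 g_2$ with $\operator L^\varepsilon g^\varepsilon = \operator G g + \varepsilon R^\varepsilon$. Matching powers of $\varepsilon$ gives $\operator L_0 g = 0$ at order $\varepsilon^{-2}$ (automatic, as $g$ is independent of $y$); at order $\varepsilon^{-1}$ the equation $\operator L_0 g_1 = -\vect f\cdot\grad_x g$, whose solvability is guaranteed by the centering condition in~\eqref{eq:assumption_f_bounded_polynomial} and whose mean-zero solution is $g_1 = \vect\phi\cdot\grad_x g$; and at order $\varepsilon^0$ the solvability condition $\int_{\real^n}\left(\operator L_1 g_1 + \operator L_2 g\right)e^{-V}\,dy = \operator G g$, which both defines $\operator G$ and, by a short computation using the symmetry of $\grad_x\grad_x g$, reproduces exactly the coefficients~\eqref{eq:definition_of_F} and~\eqref{eq:definition_of_D}, and determines a mean-zero $g_2$. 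Itô's formula then shows that
\[
    g^\varepsilon(X^\varepsilon_t,Y^\varepsilon_t) - g^\varepsilon(X^\varepsilon_0,Y^\varepsilon_0) - \int_0^t \left(\operator G g + \varepsilon R^\varepsilon\right)(X^\varepsilon_s,Y^\varepsilon_s)\,ds
\]
is a martingale. Controlling $\varepsilon R^\varepsilon$ and the difference $g^\varepsilon - g$ via~\eqref{eq:bounds_derivatives_phi} and the moment bounds, I would pass to the weak limit along a convergent subsequence to conclude that $g(X_t) - g(X_0) - \int_0^t \operator G g(X_s)\,ds$ is a martingale, i.e. $X_t$ solves the martingale problem for $\operator G$.

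The hard part is the analysis underpinning these formal manipulations: since $\vect f$, $\vect\alpha$, $\vect\phi$ and the auxiliary functions $g_1,g_2$ all grow only polynomially (cf.~\eqref{eq:bounds_derivatives_phi}), every step rests on uniform-in-$\varepsilon$ control of polynomial moments of the fast variable and of its time integrals $\int_0^t p(Y^\varepsilon_s)\,ds$, as well as of the slow variable, and on a rigorous justification of the expansion with unbounded test functions (e.g. by a truncation argument). This is exactly where assumptions~\eqref{eq:assumption_w} and~\eqref{eq:assumption_growth} are used decisively. Finally, if the martingale problem for $\operator G$ is well-posed, all accumulation points share the same law, whence the whole family converges, $X^\varepsilon_t \Rightarrow X_t$, completing the proof.
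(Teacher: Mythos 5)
The paper does not actually prove this theorem: it is quoted directly from Pardoux and Veretennikov, cited as \cite[Theorem 3]{pardoux2001poisson}, so there is no in-paper argument to compare yours against line by line. Your proposal is nonetheless the correct strategy, and it is essentially the method of proof used in that reference: decompose the generator of the joint process as $\operator L^\varepsilon = \varepsilon^{-2}\operator L_0 + \varepsilon^{-1}\operator L_1 + \operator L_2$, use the solution $\vect \phi$ of the Poisson equation~\eqref{eq:poisson_equation} as a corrector to cancel the singular drift (your formula for $dM^\varepsilon_t$ is correct; independence of $W_x$ and $W_y$ ensures there are no cross-variation terms), prove tightness by moment estimates, and identify accumulation points through the perturbed test function expansion. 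Your order-by-order matching is also right: the solvability condition at order $\varepsilon^0$, combined with the symmetry of $\grad_x \grad_x g$, is exactly what produces the symmetrized term $\vect f \otimes \vect \phi + \vect \phi \otimes \vect f$ in~\eqref{eq:definition_of_D}, and your derivation implicitly corrects the typo in the statement, where $\grad_y \grad_y$ should read $\grad_x \grad_x$. The caveat is that your proposal remains a program rather than a proof precisely where the cited reference does its real work: the uniform-in-$\varepsilon$ moment bounds for $X^\varepsilon_t$ and $Y^\varepsilon_t$ (note the circularity that the bound~\eqref{eq:bounds_derivatives_phi} on $\vect \phi$ grows linearly in $x$, so controlling $X^\varepsilon$ through $M^\varepsilon$ requires a Gronwall argument), the justification of the It\^o and martingale manipulations with polynomially growing correctors $g_1$, $g_2$, and the uniform integrability needed to pass to the limit in the martingale property along a weakly convergent subsequence. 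You acknowledge these gaps and point to the correct hypotheses~\eqref{eq:assumption_w} and~\eqref{eq:assumption_growth}, which is reasonable for a blind outline, but a self-contained proof would have to carry them out --- or simply invoke \cite[Theorem 3]{pardoux2001poisson}, as the paper does.
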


In view of this theorem,
writing $\vect D(x)=\vect A(x)\vect A(x)^T$ we obtain the functions $\vect F(x)$, $\vect A(x)$ that appear in the homogenized SDE~\eqref{eq:simplified_equation_for_general_prototype}.


\section{Numerical Method}
\label{sec:numerical_method}
In this section, we describe our method for the approximation of the effective dynamics, the analysis of which is postponed to \cref{sec:analysis_of_the_method}.
We start by introducing the necessary notation.
We will denote by $\lp{2}{\real^n}$ the space of square integrable functions on $\real^n$, by $\ip{\cdot}{\cdot}[0]$ the associated inner product, and by $\norm{\cdot}[0]$ the associated norm.
The notation $\lp{2}{\real^n}[\rho]$, for a probability density $\rho$, will be used to denote the space of functions $f$ such that $\sqrt\rho f \in \lp{2}{\real^n}$.
Weighted Sobolev spaces associated to a probability density are defined in \cref{definition:weighted_sobolev_spaces_probability_density}.
whereas scales of Sobolev spaces, associated to an operator, are defined in \cref{definition:weighted_sobolev_spaces_operator}.

In addition to these function spaces, we will denote by $\poly{d}(\real^n)$ the space of polynomials in $n$ variables of degree less than or equal to $d$,
and by $\hermite H_{\alpha}(y; \mu,\Sigma)$ the Hermite polynomials on $\real^n$ defined in \cref{sec:hermite}:
\begin{equation}
    \label{def:vector_valued_Hermite_polynomials}
    \hermite H_{\alpha}(y; \mu,\Sigma)\,=\, \hermite H_{\alpha}^* (S^{-1}(y-\mu)), \quad \text{ with }
    \alpha \in \nat^n \text{ and } \hermite H_\alpha^*(z)={\prod}_{k\,=\,1}^n H_{\alpha_k}(z_k).
\end{equation}
Here $H_{\alpha_k}(\cdot)$ denotes one-dimensional Hermite polynomial of degree $\alpha_k$,
$\Sigma \in \real^{n\times n}$ is a symmetric positive definite matrix,  $D$ and $Q$ are diagonal and orthogonal matrices such that $\Sigma = Q D Q^T$, $S = Q D^{1/2}$ and $\mu\in \real^n.$
We recall from~\cref{sec:hermite} that these polynomials form a complete orthonormal basis of $L^2(\real^n, \gaussian[\mu][\Sigma])$,
where $G_{\mu,\Sigma}$ denotes the Gaussian density on $\real^n$ with mean $\mu$ and covariance matrix $\Sigma$.
Finally, we will use the notation $\hermitef{h}_{\alpha}(y;\mu,\Sigma)$ to denote the Hermite functions corresponding to the Hermite polynomials~\eqref{def:vector_valued_Hermite_polynomials}, see~\cref{definition:hermite_function}.

We recall from~\cref{sec:diffusion_approximation} that obtaining the drift and diffusion coefficients $\vect F(X)$ and $\vect A(X)$, respectively, of the homogenized equation
\begin{equation}
    \label{eq:exact_effective_equation}
    dX = \vect F(X) \, dt + \vect A(X)\,dW_t,
\end{equation}
requires the solution of the Poisson equations~\eqref{eq:poisson_equation}.
To emphasize the fact that $x$ appears as a parameter in~\eqref{eq:poisson_equation},
we will use the notations $\vect \phi^x(\cdot) := \vect \phi(x,\cdot)$ and $\vect f^x(\cdot) := \vect f(x,\cdot)$.
The weak formulation of the Poisson equation~\eqref{eq:poisson_equation} is to find $\vect \phi^x\in \sobolev{1}{\real^n}[e^{-V}]$ such that for $i = 1, \dots, m$,
\begin{align}
    \label{eq:weak_form_pde_cell_problem}
    a_V(\phi_i^x,v):= \int_{\real^n} \grad{\phi_i^x} \cdot \grad{v} \, e^{-V} dy = \int_{\real^n} f_i^x \, v\, e^{-V}dy \qquad \forall v \in \sobolev{1}{\real^n}[e^{-V}],
\end{align}
with the centering condition
\begin{equation}
    \label{eq:centering}
    \mathcal M(\vect \phi^x) := \int_{\real^n} \vect \phi^x \, e^{-V}\,dy = \vect 0.
\end{equation}
We recall that in order to be well-posed the condition $\mathcal M(\vect f^x)=0$ must be satisfied.

We start by performing the standard unitary transformation that maps the generator of a reversible Markov process to a Schr\"odinger operator: $e^{-V/2}: \lp{2}{\real^n}[e^{-V}] \to \lp{2}{\real^n}$.
Introducing
\begin{equation}
    \operator H := e^{-V/2} \operator L \left(e^{V/2} \cdot \right) = \Delta - \left(\frac{1}{4}|\grad V|^2 - \frac{1}{2}\Delta V\right)\,= \Delta - W(y),
    \label{def:H}
\end{equation}
and $\vect \psi^x=e^{-V/2} \vect \phi^x$,
the Poisson equation~\eqref{eq:poisson_equation} can be rewritten in terms of the operator~\eqref{def:H} as:
\begin{equation}
    \label{eq:cell_problem_mapped}
    - \operator H \vect \psi^x = e^{-V/2}\vect f^x.
\end{equation}
The weak formulation of this mapped problem reads: find $\vect \psi^x\in \sobolev{1}{\real^n}[\operator{H}]$
satisfying $\hat{\mathcal{M}}(\vect \psi^x):= \int_{\real^n} \vect \psi^x \, e^{-V/2} dy = 0$ and such that, for $i = 1, \dots, m$,
\begin{align}
    a(\psi^x_i,v):= \int_{\real^n} \grad{\psi_i^x} \cdot \grad{v} +W(y)\,\psi^x_i\, v\, dy = \int_{\real^n} f_i^x \, v\, e^{-V/2}dy \qquad \forall v \in  \sobolev{1}{\real^n}[\operator{H}],
    \label{eq:weak_form_pde_cell_problem_h}
\end{align}
where $\sobolev{1}{\real^n}[\operator{H}]= \left\{ u \in \sobolev{1}{\real^n} : \int_{\real^n} \abs{W} u^2 \, dy < \infty\right\}$.
The centering condition becomes:
\begin{equation}
    \label{eq:centering_hat}
    \hat{\mathcal{M}}(\vect \psi^x):= \int_{\real^n} \vect \psi^x e^{-V/2} dy = 0.
\end{equation}
The formulas for the effective drift and diffusion coefficients can be written as
\begin{subequations}
    \label{eq:effective_coefficients_after_unitary_mapping}
    \begin{align}
        &\vect F(x) = \int_{\real^n} \grad_x \vect \psi^x \, \left(\vect f^x \, e^{-V/2}\right)dy, \\
        &\vect D(x) = \int_{\real^n} \vect \alpha \vect \alpha^T(x,y)\,\mu^x(dy) + \vect A_0(x) + \vect A_0(x)^T,
    \end{align}
\end{subequations}
where
\begin{equation}
    \label{def:A_0}
    \vect A_0(x) = \int_{\real^n} \vect \psi^x \otimes \left(\vect f^x \, e^{-V/2}\right)\, dy.
\end{equation}
The advantage of using the unitary transformation is that the solution of this new problem and its derivative lie in $\lp{2}{\real^n}$, rather than in a weighted space.

To approximate numerically the coefficients of the effective SDE,
we choose a finite-dimensional subspace $\hat S_d$ of $\sobolev{1}{\real^n}[\operator{H}]$, specified below, and consider the finite-dimensional approximation problem:
find $\vect \psi_d^{x}\in \hat S_d$ such that, for $i = 1, \dots, m$,
\begin{align}
    a(\psi_{di}^{x},v_d)= \int_{\real^n} f^x_i \, v_d\, e^{-V/2}dy \qquad \forall v_d \in \hat S_d.
    \label{eq:weak_form_pde_cell_problem_num}
\end{align}
While the centering condition for $\vect \psi^x$ serves to guarantee the uniqueness of the solution to \eqref{eq:weak_form_pde_cell_problem_h},
it does not affect the coefficients \eqref{eq:definition_of_F}, \eqref{eq:definition_of_D} of the simplified equation.
Existence and uniqueness---possibly up to a function in the kernel of $\operator{H}$---of
the solution of the finite-dimensional problem are inherited from the infinite-dimensional problem~\eqref{eq:weak_form_pde_cell_problem_h}.

For a given basis ${\left\{e_\alpha\right\}}_{|\alpha| \leq d}$ of $\hat S_d$,
the finite-dimensional approximation of $\vect \psi^x$ can be expanded as $\vect \psi_d^{x} = \sum_{\abs{\alpha} \leq d} \vect \psi^x_\alpha \, e_\alpha$,
and from the variational formulation~\eqref{eq:weak_form_pde_cell_problem_num} we obtain the following linear systems:
\begin{equation}
    \label{eq:finite_dimensional_problem_in_matrix_form}
    \sum_{|\beta| \leq d} a(e_\alpha, \, e_\beta)\, \vect \psi^x_\beta = \vect f^x_\alpha \quad \text{with} \quad \vect f^x_\alpha = \int_{\real^n}\vect f^x\,e_\alpha\,e^{-V/2}\,dy.
\end{equation}
We will use the notation $A_{\alpha \beta} = a(e_\alpha, e_\beta)$ for the stiffness matrix.
In view of formula~\eqref{eq:effective_coefficients_after_unitary_mapping} we see that we also need
an approximation the gradient of the solution, which we denote by $\nabla_x \vect \psi_d^{x}$.
This can be obtained by solving~\eqref{eq:finite_dimensional_problem_in_matrix_form} with the right-hand side  $(\nabla_x \vect f^x)_\alpha = \int_{\real^n}(\grad_x \vect f^x)\,e_\alpha\,e^{-V/2}\,dy$.

Once the solutions $\vect \psi_d^{x}$ and $\nabla_x\vect \psi_d^{x}$ are computed, we can calculate the approximate drift and diffusion as follows.
Then, by substituting the approximations of $\vect \psi_d^{x}$, $\nabla_x\vect \psi_d^{x}$, and $e^{-V/2}\vect f^x$ in~\eqref{eq:effective_coefficients_after_unitary_mapping}, we obain
\begin{subequations}
    \begin{align}
        & \vect F_d(x) = \sum_{\abs{\alpha} \leq d} \sum_{\abs{\beta} \leq d} \ip{e_\alpha}{e_\beta}[0] \,(\nabla_x\vect \psi^x)_\alpha \cdot \vect f_\beta^x,\\
        & \vect A_{0d}(x) = \sum_{\abs{\alpha} \leq d} \sum_{\abs{\beta} \leq d} \ip{e_\alpha}{e_\beta}[0] \, \vect \psi_\alpha^x \otimes \vect f_\beta^x,\\
        & \vect D_d(x) = \int_{\real^n} \vect \alpha\vect  \alpha^T(x,y)\, e^{-V} dy + \vect A_{0d}(x)+  \vect A_{0d}(x)^T,\quad \vect A_d(x){\vect A_d(x)}^T= \vect D_d(x).
    \end{align}
    \label{eq:definition_of_fd_and_ad}
\end{subequations}
Using these coefficients, we obtain the approximate homogenized SDE
\begin{equation}
    \label{eq:approximate_effective_equation}
    dX_d \,=\, \vect F_d(X_d)dt + \vect A_d(X_d) dW_t.
\end{equation}
This equation can now be easily solved using a standard numerical method, e.g. Euler-Maruyama.

Our numerical methodology is based on the expansion of the solution to~\eqref{eq:cell_problem_mapped} in Hermite functions:
\begin{equation}
    \label{eq:finite_dimensional_space_of_galerkin_method}
    \hat S_d=\hbox{span}\{ \hermitef{h}_{\alpha}(y;\mu,\Sigma)\}_{|\alpha|\leq d}.
\end{equation}
%
%
%
%
%
%
A good choice of the mean and covariance, $\mu$ and $\Sigma$, respectively, is important for the efficiency of the algorithm.
In our implementation we choose
\begin{equation}
    \label{eq:choice_of_mu_and_sigma}
    \mu = \int_{\real^n} y\, e^{-V(y)}dy\quad \text{ and } \quad \Sigma =  \lambda \int_{\real^n} (y-\mu){(y-\mu)}^T\,e^{-V(y)}\,dy,
\end{equation}
where $\lambda > 0$ is a free parameter independent of the first two moments of $e^{-V}$.
This choice for the mean and covariance  guarantees that our method is invariant under the rescaling $\tilde Y_t^\varepsilon = \sigma(Y_t^\varepsilon-m)$.
An example illustrating why this is desirable is when the mass of the probability density $e^{-V}$ is concentrated far away from the origin.
Using Hermite functions centered at 0 would provide a very poor approximation in this case, but choosing Hermite functions around the center of mass of $e^{-V}$ leads to a much better approximation.
Note that this is not the only choice that guarantees invariance under rescaling, but it is the most natural one.
\begin{remark}
    \label{remark:scaling_parameter}
    When the potential $V$ is quadratic, say $V(y) = \frac{1}{2} {(y-m)}^T S (y-m)$, the eigenfunctions of the operator $\operator H$ (defined in \eqref{def:H}) are precisely the Hermite functions $\hermitef{h}_\alpha(y;m,S)$.
    Hence choosing these as a basis, i.e. $e_\alpha = \hermitef{h}_\alpha(y;m,S)$, leads to a diagonal matrix $A$ in the linear systems~\eqref{eq:finite_dimensional_problem_in_matrix_form},
    because $a(e_\alpha, e_\beta) = \lambda_\alpha \delta_{\alpha \beta}$, with $\lambda$ defined in \cref{eq:eigenvalues_of_hermite polynomials}.
    This choice corresponds to $\lambda = 1$ in~\eqref{eq:choice_of_mu_and_sigma}.
    The optimal choice for the  parameters $\mu$ and $\Sigma$ for a general density $e^{-V}$ and function $\vect f$ has been partially studied.
    In particular, it was shown in \cite{gottlieb1977numerical} that $O(p^2)$ Hermite polynomials are necessary to resolve $p$ wavelengths of a sine function, when keeping the scaling parameter fixed.
    This result carries over to the case of normalized Hermite functions,
    where the associated covariance matrix would play the role of the scaling parameter.
    More recently, it was shown in~\cite{tang1993hermite} that much better results could be obtained by choosing the scaling parameter as a function of the degree of approximation.
    In particular, it was shown that that by choosing this parameter inversely proportional to the number of Hermite functions,
    only $O(p)$ functions are needed in order to resolve $p$ wavelengths in one spatial dimension.
\end{remark}
%
%
    \begin{framed}
    \paragraph{Summary of the Method}
    In short, the method can be summarized as follows.

    For a given initial condition $X^\varepsilon(0)=X_0$, $n=0,\,1,\,2,\,\ldots$,
    a given stochastic integrator $X_d^{n+1}=\Psi(X_d^{n},\vect F_d,\vect A_d,\Delta t,\xi_{n})$,
    and a chosen time step $\Delta t$, set $X_0^n = X_0$ and
     \begin{enumerate}
        \item Compute the solution $\vect \psi_d^{X_d^{n}}$ and $\nabla_x\vect \psi_d^{X_d^{n}}$ of~\eqref{eq:finite_dimensional_problem_in_matrix_form};
        \item Evaluate $\vect F_d(X_d^n),\vect A_d(X_d^{n})$ from~\eqref{eq:definition_of_fd_and_ad};
        \item Compute a time step $X_d^{n+1}=\Psi(X_d^{n},\vect F_d,\vect A_d,\Delta t,\xi_{n})$, and go back to 1.
     \end{enumerate}
    \end{framed}

\section{Main Results}
\label{sec:main_results}

In this section we present the main results on the analysis of our numerical method,
the proof of which will be presented in \cref{sec:analysis_of_the_method}.
We first need to introduce some new notations.
We will denote by $\ip{\cdot}{\cdot}[e^{-V}]$ the inner product of $\lp{2}{\real^n}[e^{-V}]$, defined by
$
    \ip{u}{v}[e^{-V}] = \int_{\real^n} u\,v\,e^{-V}\,dy,
$
and by $\norm{\cdot}[e^{-V}]$ the associated norm.
We will also use the notation $\norm{\cdot}[k][e^{-V}]$ for the norm of $\sobolev{k}{\real^n}[e^{-V}]$,
and $\norm{\cdot}[k][\operator{O}]$, where $\operator{O}$ is an operator, for the norm of $\sobolev{k}{\real^n}[\operator{O}]$,
see~\cref{sec:weighted_sobolev_spaces}.
We will denote by $\pi(\cdot)$ the projection onto mean-zero functions of $\lp{2}{\real^n}[e^{-V}]$, defined by
\begin{equation}
    \label{eq:projection_in_weighted_space}
    \pi(v) = v - \ip{v}{1}[e^{-V}], \quad v \in \lp{2}{\real^n}[e^{-V}].
\end{equation}
We will work mostly with the Schr\"odinger formulation~\eqref{eq:cell_problem_mapped} of the Poisson equation.
In that context,
we will employ the $\lp{2}{\real^n}$ projection operator on $\{\hat v \in \lp{2}{\real^n}: \hat M(\hat v) = 0\}$,
see~\cref{eq:centering_hat},
which we denote by $\hat \pi(\cdot)$ :
\begin{equation}
    \label{eq:projection_in_flat}
    \hat \pi(\hat v) = \hat v - \ip{\hat v}{e^{-V/2}}_0 \, e^{-V/2}, \quad \hat v \in \lp{2}{\real^n}.
\end{equation}

Finally, we will say that a function $g \in \lp{2}{\real^n} \cap \smooth{\real^n}$ decreases faster than any exponential function in the $\lp{2}{\real^n}$ sense if
\begin{equation}
    \label{e:exponential_decrease_L2}
    \int_{\real^n} {g(x)}^2 e^{\mu |y|} \, dy < \infty \quad \forall \mu \in \real,
\end{equation}
and denote by $E(\real^n)$ the space of all such functions.

In addition to the hypotheses presented in \cref{sec:diffusion_approximation}, we will employ the following assumptions.
\begin{assumption}
    \label{assumption:potential}
    The potential $W(y)$,
        introduced in~\eqref{eq:assumption_w}, is bounded from above by a polynomial of degree $4k$, for some $k \in \nat$.
        Furthermore, for every multi-index $\alpha$, there exist constants $c_\alpha > 0$ and $\mu_\alpha \in \real$ such that
        \[
            \abs{\partial_y^\alpha V} \leq c_\alpha \, e^{\mu_\alpha |y|},
        \]
        where $V(\cdot)$ is the potential that appears in~\eqref{e:y}.
\end{assumption}
\begin{assumption}
    \label{assumption:right-hand_side}
    The drift vector $\vect f(x,y)$ in~\eqref{e:x} is such that
        $e^{-V(\cdot)/2} \, \partial^\alpha_y \vect f(x,\cdot) \in \left(E(\real^n)\right)^m$ and
        $e^{-V(\cdot)/2} \, \partial^\alpha_y \nabla_x \vect f(x,\cdot) \in \left(E(\real^n)\right)^{m\times m}$
    for all $\alpha \in \nat^n$ and $x \in \real^m$.
\end{assumption}
For the proof of our main theorem we will need to have control on higher order derivatives of the solution to the Poisson equation~\eqref{eq:poisson_equation}.
To obtain such bounds we need to strengthen our assumptions on $\vect f(x,y)$ in~\eqref{e:x}.
In particular, in addition to~\eqref{eq:assumption_growth}, we assume the following:
\begin{assumption}
    \label{assumption:higher_moment_bounds}
    For all $\alpha \in \nat^n$, there exist constants $C_\alpha > 0$ and $\ell_\alpha \in \nat$ such that
    \begin{equation}
        \abs{\partial_y^\alpha \vect f} + \abs{\partial_y^\alpha \nabla_x \vect f} \leq C_\alpha \, (1 + \abs{y}^{\ell_\alpha}).
    \end{equation}
    In addition, the diffusion coefficient in the right-hand side of~\eqref{e:x} satisfies
    \begin{equation}
        \abs{\vect \alpha(x,y)} \leq K(1 + \abs{y}^{m_3}),
    \end{equation}
    for constants $K$ and $m_3$ independent of $x$.
\end{assumption}
From the Pardoux-Veretennikov bounds~\eqref{eq:bounds_derivatives_phi},
a bootstrapping argument, \cref{assumption:potential,assumption:higher_moment_bounds} and the integrability of monomials with respect to Gaussian weights we obtain the bounds
\begin{equation}
    \label{eq:bounds_on_weighted_norms}
    \norm{\vect \phi^x}[s,\operator L_{\mu,\Sigma}] \vee \norm{\nabla_x\vect  \phi^x}[s,\operator{L}_{\mu,\Sigma}] \vee \norm{\vect f^x}[e^{-V}] \leq C(s),
\end{equation}
for $s \in \nat$ and a constant $C(s)$ independent of $x$,
and where $a \vee b$ denotes the maximum between $a$ and $b$.
\Cref{assumption:higher_moment_bounds} and the moment bounds from~\cite{pardoux2001poisson} guarantee that
the coefficients of the homogenized equation~\eqref{eq:simplified_equation_for_general_prototype} are smooth and Lipschitz continuous.
Combined with the Poincaré inequality from~\eqref{proposition:poincare_inequality_weighted_measure},
they imply that the approximate coefficients calculated by~\eqref{eq:definition_of_fd_and_ad} are also globally Lipschitz continuous.
\begin{remark}
    In \cref{assumption:higher_moment_bounds} we assumed that the derivatives of the drift vector in~\eqref{e:x} with respect to $y$ are bounded uniformly in $x$.
    This is a very strong assumption and it can be replaced by a linear growth bound as in~\eqref{eq:assumption_growth}. Under such an assumption the proof of \cref{theorem:approximation_convergence_sde} has to be modified using a localization argument that is based on the introduction of appropriate stopping times.
    Although tedious, this is a standard argument, see e.g.~\cite{higham2002strong}, and we will not present it in this paper. Details can be found in~\cite{mres_urbain}.
\end{remark}
\begin{theorem}[Spectral convergence of the Hermite-Galerkin method]
\label{theorem:spectral_convergence_of_the_hermite_galerkin_method}
Under \cref{assumption:potential,assumption:right-hand_side},
there exists for all $x \in \real^m$ and $s \in \nat$ a constant $C(x,s)$ such that the
approximate solutions $\psi_d^x$ and $\nabla_x \psi_d^x$ satisfy the following error estimate:
\[
    \norm{\hat {\vect \pi}(\vect \psi^x_d) - \vect \psi^x}[0]
    \vee \norm{\hat \pi(\nabla_x \vect \psi^x_d) - \nabla_x \vect \psi^x}[0]
        \leq C(x,s) \, d^{-s}.
\]
\end{theorem}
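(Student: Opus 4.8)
The plan is to combine a classical Galerkin quasi-optimality estimate with a spectral bound on the error of the best approximation of the exact solution by Hermite functions; the regularity needed for the latter is supplied by the uniform bounds~\eqref{eq:bounds_on_weighted_norms}. First I would record that the bilinear form $a(\cdot,\cdot)$ in~\eqref{eq:weak_form_pde_cell_problem_h} is continuous on $\sobolev{1}{\real^n}[\operator{H}]$ and, thanks to the spectral gap of $\operator H$---equivalently the Poincar\'e inequality~\eqref{proposition:poincare_inequality_weighted_measure} for $e^{-V}$ transported through the unitary map $e^{-V/2}$---coercive on the subspace $\{\hat v : \hat{\mathcal M}(\hat v)=0\}$. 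Since the kernel of $\operator H$ is spanned by $e^{-V/2}$ (the image of the constants under the transformation) and $\hat\pi$ in~\eqref{eq:projection_in_flat} removes exactly this component, Galerkin orthogonality between~\eqref{eq:weak_form_pde_cell_problem_h} and~\eqref{eq:weak_form_pde_cell_problem_num}, together with C\'ea's lemma, yields
\begin{equation*}
    \norm{\hat{\vect\pi}(\vect\psi^x_d) - \vect\psi^x}[1,\operator H]
    \le C \inf_{\vect v_d \in \hat S_d} \norm{\vect v_d - \vect\psi^x}[1,\operator H].
\end{equation*}
Because the energy norm dominates the $\lp{2}{\real^n}$ norm on mean-zero functions, it suffices to bound the right-hand side.

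The heart of the argument is to show that this best-approximation error decreases faster than any negative power of $d$. Here I would invoke the approximation theory in the operator-Sobolev scale developed in the appendix (\cref{definition:weighted_sobolev_spaces_operator}): the Hermite functions $\hermitef{h}_\alpha(\cdot;\mu,\Sigma)$ spanning $\hat S_d$ are, after the canonical unitary conjugation, the eigenfunctions of the operator $\operator L_{\mu,\Sigma}$, so that for any function lying in the associated space $\sobolev{s}{\real^n}[\operator L_{\mu,\Sigma}]$ the truncation of its expansion at total degree $d$ incurs an error of order $d^{-s}$. The regularity input is precisely the uniform bound $\norm{\vect\phi^x}[s,\operator L_{\mu,\Sigma}]\le C(s)$ from~\eqref{eq:bounds_on_weighted_norms}; combined with \cref{assumption:potential,assumption:right-hand_side}, which control the growth of the derivatives of $V$ and guarantee that $e^{-V/2}\partial_y^\alpha \vect f \in E(\real^n)$, this places $\vect\psi^x = e^{-V/2}\vect\phi^x$ in every space of the scale associated with the basis $\hat S_d$, with a constant independent of $x$. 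I expect this transfer of regularity to be the main obstacle: the Hermite functions in $\hat S_d$ are centered at a generic pair $(\mu,\Sigma)$ rather than being the eigenfunctions of $\operator H$ itself, so the Gaussian weight $\gaussian[\mu][\Sigma]$ implicit in the scale $\sobolev{s}{\real^n}[\operator L_{\mu,\Sigma}]$ does not coincide with $e^{-V}$, and one must use the exponential-decay condition $E(\real^n)$ and the Gaussian integrability of monomials to reconcile the two weights and conclude that the flat $\lp{2}{\real^n}$ Hermite coefficients of $\vect\psi^x$ indeed decay spectrally. Shifting $s$ by the fixed number of derivatives lost in passing from the energy norm to the $\lp{2}{\real^n}$ norm then gives the claimed estimate for $\vect\psi^x$.

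Finally, the estimate for $\nabla_x\vect\psi^x$ follows by the same route: differentiating the Poisson equation~\eqref{eq:cell_problem_mapped} in $x$ shows that $\nabla_x\vect\psi^x$ solves the same variational problem with right-hand side $e^{-V/2}\nabla_x\vect f^x$, and the bound on $\norm{\nabla_x\vect\phi^x}[s,\operator L_{\mu,\Sigma}]$ in~\eqref{eq:bounds_on_weighted_norms}, together with the part of \cref{assumption:right-hand_side} concerning $e^{-V/2}\partial_y^\alpha\nabla_x\vect f$, supplies the required regularity. Taking the maximum of the two spectral estimates yields the result.
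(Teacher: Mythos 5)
Your proposal is correct and follows essentially the same route as the paper's own proof: C\'ea's lemma in the $\operator H$ energy norm with the projection $\hat\pi$ handling the kernel (\cref{lemma:modified_cea,lemma:modified_cea_flat}), Hermite approximation theory in the scale $\sobolev{s}{\real^n}[\operator{H}_{\mu,\Sigma}]$ (\cref{corollary:approximation_by_hermite_functions}) fed by the uniform bounds~\eqref{eq:bounds_on_weighted_norms}, and the same differentiated-equation treatment of $\nabla_x\vect\psi^x$. The weight-reconciliation step you flag as the main obstacle is precisely what the paper settles via \cref{lemma:bound_by_gaussian_norm} and the lemma following it, namely the estimate $\norm{v}[1][\operator{H}] \leq C\,\norm{v}[2k][\operator{H}_{\mu,\Sigma}]$ for Schwartz functions when $W$ is bounded above by a polynomial of degree $4k$, together with the exponential-decay bootstrap showing $\vect\psi^x \in \schwartz{\real^n}$; note only that the fixed loss of $2k$ derivatives occurs in that norm comparison---not, as you write, in passing from the energy norm to $\lp{2}{\real^n}$---which is immaterial since $s$ is arbitrary.
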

Using this result, we can prove spectral convergence for the calculation of the drift and diffusion coefficients.

\begin{theorem}[Convergence of the drift and diffusion coefficients $F_d$ and $A_d$]
\label{theorem:convergence_of_drift_and_diffusion_coefficients}
Suppose that Assumptions~\ref{assumption:potential},~\ref{assumption:right-hand_side} and~\ref{assumption:higher_moment_bounds} hold.
Then the error on the approximate drift and diffusion coefficients decreases faster than any negative power of $d$, uniformly in $x$,
i.e. for all $s \in \nat$ there exists $D(s)$ such that
    \[
        \sup_{x \in \real^m}\abs{\vect F_d(x) -\vect F(x)} \vee \abs{\vect A_d(x)\,\vect A_d(x)^T - \vect A(x)\,\vect A(x)^T} \leq D(s)\, d^{-s}.
    \]
\end{theorem}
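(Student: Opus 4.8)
The plan is to reduce the convergence of the coefficients to the already-established spectral convergence of the Schr\"odinger solutions in \cref{theorem:spectral_convergence_of_the_hermite_galerkin_method}, by rewriting the quadrature formulas \eqref{eq:definition_of_fd_and_ad} as genuine $\lp{2}{\real^n}$ inner products. First I would exploit that the Hermite functions form an orthonormal basis of $\lp{2}{\real^n}$, so that the $\lp{2}{\real^n}$ Gram matrix reduces to the identity, $\ip{e_\alpha}{e_\beta}[0] = \delta_{\alpha\beta}$. Since $\vect f^x_\beta = \ip{e^{-V/2}\vect f^x}{e_\beta}[0]$ and $\nabla_x\vect\psi^x_d = \sum_{\abs{\alpha}\le d}(\nabla_x\vect\psi^x)_\alpha\, e_\alpha$, the double sums in \eqref{eq:definition_of_fd_and_ad} collapse to
\[
\vect F_d(x) = \ip{\nabla_x\vect\psi^x_d}{e^{-V/2}\vect f^x}[0], \qquad \vect A_{0d}(x) = \int_{\real^n}\vect\psi^x_d\otimes\left(e^{-V/2}\vect f^x\right)dy,
\]
which mirror the exact formulas \eqref{eq:effective_coefficients_after_unitary_mapping} and \eqref{def:A_0} with $\vect\psi^x_d$, $\nabla_x\vect\psi^x_d$ in place of $\vect\psi^x$, $\nabla_x\vect\psi^x$.

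The second ingredient is the centering condition $\mathcal M(\vect f^x) = \int_{\real^n}\vect f^x\, e^{-V}\,dy = 0$ from \eqref{eq:centering}. Decomposing any $\hat v\in\lp{2}{\real^n}$ as $\hat v = \hat\pi(\hat v) + \ip{\hat v}{e^{-V/2}}[0]\,e^{-V/2}$ via \eqref{eq:projection_in_flat} and pairing against $e^{-V/2}\vect f^x$, the mean part contributes $\ip{\hat v}{e^{-V/2}}[0]\int_{\real^n}e^{-V}\vect f^x\,dy = 0$. Hence $\vect F_d(x) = \ip{\hat\pi(\nabla_x\vect\psi^x_d)}{e^{-V/2}\vect f^x}[0]$, and $\vect A_{0d}$ may likewise be written with $\hat\pi(\vect\psi^x_d)$. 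Since $\vect\psi^x$ and $\nabla_x\vect\psi^x$ are themselves centered, subtracting gives
\[
\vect F_d(x) - \vect F(x) = \ip{\hat\pi(\nabla_x\vect\psi^x_d) - \nabla_x\vect\psi^x}{e^{-V/2}\vect f^x}[0],
\]
with an analogous tensor identity for $\vect A_{0d} - \vect A_0$. A Cauchy--Schwarz estimate (in Frobenius norm for the tensor) then bounds both differences by $\norm{\hat\pi(\nabla_x\vect\psi^x_d) - \nabla_x\vect\psi^x}[0]$ (resp.\ the $\vect\psi$ version) times $\norm{e^{-V/2}\vect f^x}[0] = \norm{\vect f^x}[e^{-V}]$; the first factor is $O(d^{-s})$ by \cref{theorem:spectral_convergence_of_the_hermite_galerkin_method} while the second is bounded by \eqref{eq:bounds_on_weighted_norms}. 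For the diffusion coefficient I would observe that $\vect A_d\vect A_d^T = \vect D_d$ and $\vect A\vect A^T = \vect D$ share the identical term $\int_{\real^n}\vect\alpha\vect\alpha^T e^{-V}\,dy$ (recall $\mu^x(dy) = e^{-V}dy$), which therefore cancels, so that $\vect A_d\vect A_d^T - \vect A\vect A^T = (\vect A_{0d} - \vect A_0) + (\vect A_{0d} - \vect A_0)^T$ and the bound on $\abs{\vect A_{0d} - \vect A_0}$ suffices; this conveniently avoids comparing the matrix square roots $\vect A_d$ and $\vect A$ directly.

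The main obstacle is the uniformity in $x$. \Cref{theorem:spectral_convergence_of_the_hermite_galerkin_method} as stated produces an $x$-dependent constant $C(x,s)$, whereas the claim requires a supremum over $x$. The resolution is that \cref{assumption:higher_moment_bounds} is precisely what upgrades the Pardoux--Veretennikov bounds \eqref{eq:bounds_derivatives_phi} to the $x$-uniform weighted-Sobolev bounds \eqref{eq:bounds_on_weighted_norms} on $\vect\phi^x$ and $\nabla_x\vect\phi^x$. I would therefore trace through the proof of \cref{theorem:spectral_convergence_of_the_hermite_galerkin_method} to verify that $C(x,s)$ enters only through these weighted norms of the exact solution (and through $\norm{\vect f^x}[e^{-V}]$), so that under \cref{assumption:higher_moment_bounds} it may be replaced by a constant $D(s)$ independent of $x$. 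Combining this with the uniform bound on $\norm{\vect f^x}[e^{-V}]$ closes the estimate. The remaining steps---collapsing the Gram matrix, the centering cancellation, and the tensor Cauchy--Schwarz---are routine once the orthonormality of the Hermite-function basis is invoked.
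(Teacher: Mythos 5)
Your proposal is correct and takes essentially the same route as the paper's proof: both rewrite $\vect F_d(x)$ and $\vect A_{0d}(x)$ as genuine $\lp{2}{\real^n}$ pairings of the Galerkin solution against $e^{-V/2}\vect f^x$ (the paper phrases this via self-adjointness of the $\lp{2}{\real^n}$-projection of $\vect f^x e^{-V/2}$ rather than via the Gram matrix, which is equivalent), then apply \cref{theorem:spectral_convergence_of_the_hermite_galerkin_method} with Cauchy--Schwarz, cancel the common $\vect \alpha\vect \alpha^T$ term so that only $\vect A_{0d}-\vect A_0$ needs to be estimated, and obtain uniformity in $x$ from the bounds~\eqref{eq:bounds_on_weighted_norms}. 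If anything, your explicit use of the centering condition $\mathcal M(\vect f^x)=\vect 0$ to discard the mean component of $\vect \psi_d^x$, so that the error is exactly the projected quantity $\hat\pi(\vect\psi_d^x)-\vect\psi^x$ controlled by \cref{theorem:spectral_convergence_of_the_hermite_galerkin_method}, is a detail the paper's proof silently elides.
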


Using the spectral convergence of the approximate calculation of the drift and diffusion coefficients,
we can now control the distance between the solution of the homogenized SDE and its approximation~\eqref{eq:approximate_effective_equation}.
Denoting by $X(t)$ the exact solution of the homogenized equation and by $X_d(t)$ the approximate solution,
we use the following norm to measure the error:
\begin{equation}
    ||| X(t)  - X_d(t) ||| :=   \left( \expect \left[ \sup_{0\, \leq \,t\, \leq \,T} \, |X(t) \,-\,X_d(t)|^2 \right] \right)^{1/2}.
\end{equation}
\begin{theorem}
    \label{theorem:approximation_convergence_sde}
    Let \cref{assumption:potential,assumption:right-hand_side,assumption:higher_moment_bounds} hold.
    Then the error between the approximate and exact solutions of the simplified equation satisfies
    \begin{equation}
        |||X(t) \,-\,X_d(t)||| \, \leq \,\sqrt{4\,(T\,+\,4)\,D(s)\,T\,d^{-s}}\,\exp \left(2\,(T\,+\,4)\,C_L\, T\right),
    \end{equation}
    for any $s \in \nat$ and $T > 0$.
\end{theorem}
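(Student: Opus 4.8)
The plan is to run a standard strong-error (Grönwall) argument for the two Itô SDEs~\eqref{eq:exact_effective_equation} and~\eqref{eq:approximate_effective_equation}, coupling them through the \emph{same} Brownian motion $W_t$ and the same initial datum. Setting $\vect e(t) := X(t) - X_d(t)$, we have
\[
    \vect e(t) = \int_0^t \bigl(\vect F(X(s)) - \vect F_d(X_d(s))\bigr)\,ds + \int_0^t \bigl(\vect A(X(s)) - \vect A_d(X_d(s))\bigr)\,dW_s.
\]
Under \cref{assumption:higher_moment_bounds} and the discussion following it, the four coefficients $\vect F,\vect A,\vect F_d,\vect A_d$ are globally Lipschitz with a common constant, which I denote $C_L$, and grow at most linearly; hence $X$ and $X_d$ have finite second moments uniformly on $[0,T]$, and every expectation below is finite and the manipulations are legitimate.

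Next I would estimate $g(\tau) := \expect\sup_{0\le t\le\tau}\abs{\vect e(t)}^2$. Using $(a+b)^2 \le 2a^2 + 2b^2$ to separate the drift and martingale contributions, the Cauchy--Schwarz inequality on the time integral (which yields a factor $\tau \le T$), and Doob's $L^2$ maximal inequality on the stochastic integral (which yields the factor $4$), one is led to a bound of the form
\[
    g(\tau) \le 2(T+4)\,\expect\int_0^\tau \Bigl( \abs{\vect F(X(s)) - \vect F_d(X_d(s))}^2 + \abs{\vect A(X(s)) - \vect A_d(X_d(s))}^2 \Bigr)\,ds,
\]
which is where the combination $T+4$ originates.

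It then remains to split each integrand into a \emph{stability} part and a \emph{consistency} part, writing $\vect F(X(s)) - \vect F_d(X_d(s)) = [\vect F(X(s)) - \vect F(X_d(s))] + [\vect F(X_d(s)) - \vect F_d(X_d(s))]$, and analogously for $\vect A$. The stability parts are controlled by $C_L\abs{\vect e(s)}$ via Lipschitz continuity. For the drift the consistency part is bounded directly by $\sup_x\abs{\vect F - \vect F_d} \le D(s)\,d^{-s}$ from \cref{theorem:convergence_of_drift_and_diffusion_coefficients}. The diffusion consistency term is the crux and the main obstacle: \cref{theorem:convergence_of_drift_and_diffusion_coefficients} controls only the diffusion \emph{tensor} $\abs{\vect A_d\vect A_d^T - \vect A\vect A^T} \le D(s)\,d^{-s}$, not $\vect A_d - \vect A$ itself. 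Taking $\vect A,\vect A_d$ to be the symmetric positive semidefinite square roots of $\vect D,\vect D_d$ and invoking the Hölder-$\tfrac12$ continuity of the matrix square root gives $\abs{\vect A(x) - \vect A_d(x)}^2 \le C\,D(s)\,d^{-s}$, with the loss of one square root. This is precisely why the final bound features $\sqrt{D(s)\,d^{-s}}$ rather than $D(s)\,d^{-s}$; the drift consistency term, being of order $(D(s)\,d^{-s})^2$, is higher order and is absorbed.

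Collecting these estimates yields an integral inequality $g(\tau) \le a + b\int_0^\tau g(s)\,ds$, with $a = 4(T+4)\,D(s)\,T\,d^{-s}$ (after absorbing the square-root constant into $D(s)$ and discarding the higher-order drift term) and $b = 4(T+4)\,C_L$, where $C_L$ collects the common Lipschitz constants. Grönwall's inequality gives $g(T) \le a\,e^{bT} = 4(T+4)\,D(s)\,T\,d^{-s}\,\exp\bigl(4(T+4)C_L T\bigr)$, and since $\enorm{X(t) - X_d(t)} = g(T)^{1/2}$, taking square roots produces exactly the stated estimate, valid for every $s \in \nat$ and $T > 0$. The only genuinely non-routine point is the square-root conversion from the tensor error to the error on $\vect A$; everything else is careful bookkeeping of the maximal-inequality constants.
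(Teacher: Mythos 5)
Your proposal is correct in substance and follows essentially the same route as the paper's proof: you couple the exact and approximate dynamics through the same Brownian motion, split each coefficient difference into a stability part (controlled by the Lipschitz bound~\eqref{eq:lipschitz_result_for_proof_in_time}) and a consistency part (controlled by \cref{theorem:convergence_of_drift_and_diffusion_coefficients}), estimate the drift term by Cauchy--Schwarz and the martingale term by Doob's $L^2$ maximal inequality (the paper invokes Burkholder--Davis--Gundy plus the It\^o isometry, which for the second moment gives the same factor $4$), and close with Gr\"onwall. The one step where you genuinely differ is the square root of the diffusion tensor: the paper fixes $\vect A_d$ by Cholesky factorization and asserts that, for this choice, $\abs{\vect A_d(x)-\vect A(x)}$ decays faster than any negative power of $d$, whereas you take symmetric positive semidefinite square roots and invoke the H\"older-$\frac{1}{2}$ continuity of the matrix square root, so that $\abs{\vect A(x)-\vect A_d(x)}^2\leq C\,D(s)\,d^{-s}$. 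Both resolve the same ambiguity (the relation $\vect D=\vect A\vect A^T$ does not determine $\vect A$), and both are legitimate; your variant is arguably more robust, since the H\"older bound requires no nondegeneracy of $\vect D(x)$, while the stability of the Cholesky factor implicitly relies on uniform positive definiteness, and neither is proved in detail in the paper.

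One piece of bookkeeping does not deliver the constants you claim. By merging the drift and martingale estimates into the single prefactor $2(T+4)$ \emph{before} the stability/consistency splitting, the split integrand $4\,C_L\abs{\vect e(s)}^2+4\,D(s)\,d^{-s}$ produces $g(\tau)\leq 8(T+4)D(s)Td^{-s}+8(T+4)C_L\int_0^\tau g(s)\,ds$, hence after Gr\"onwall and a square root the exponent $4(T+4)C_LT$, twice the stated $2(T+4)C_LT$; this extra factor cannot be absorbed into $C_L$, which is fixed by~\eqref{eq:lipschitz_result_for_proof_in_time} (absorbing dimensional constants into $D(s)$, by contrast, is fine and is what the paper itself does in~\eqref{eq:convergence_coefficients_for_proof_in_time}). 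The paper's bookkeeping avoids this loss by keeping the coefficient $2T$ on the drift integral and $8$ on the diffusion integral, performing the splitting inside each term separately, and only then summing $4T+16=4(T+4)$. This is a two-line repair of the order of operations, not a gap in the argument.
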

Now we consider the fully discrete scheme.
We need to consider an appropriate discretization of the approximate homogenized equation~\eqref{eq:approximate_effective_equation}.
For simplicity we present the convergence results for the case when we discretize the homogenized SDE using the Euler-Maruyama method:
\begin{equation}
    \label{eq:stochastic_integrator_euler-maruyama}
    X_d^{n+1}=X_d^n+\Delta t\, \vect F_d(X_d^n)+\vect A_d(X_d^n)\, \Delta W_n,
\end{equation}
but we emphasize that any higher order integrator, e.g. the Milstein scheme, could be used \cite{kloeden1992numerical,MiT04}.
The following is a classical result on the convergence of $X_d^n$ for which we refer to~\cite{kloeden1992numerical,MiT04,higham2002strong} for a proof.

\begin{theorem}[Convergence of the SDE solver]
    \label{theorem:convergence_of_euler-maruyama}
    Assume that $X_0$ is a random variable such that $\expect|X_0|^2<\infty$ and that \cref{assumption:potential,assumption:right-hand_side,assumption:higher_moment_bounds} hold. Then
    \begin{equation}
        \left(\expect \left[ \sup_{n\Delta t\in[0,T]} |X_d^n \,-\,X_d(t_n)|^2 \right] \right)^{\frac{1}{2}}\, \leq C(T) \sqrt{\Delta t}.
    \end{equation}
    for any choice of $T$, where $X_d^n$ denotes the solution of~\eqref{eq:stochastic_integrator_euler-maruyama}.
\end{theorem}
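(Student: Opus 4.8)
The plan is to recognize that, for a \emph{fixed} approximation degree $d$, this is a classical strong-convergence estimate for the Euler--Maruyama scheme~\eqref{eq:stochastic_integrator_euler-maruyama} applied to the Itô SDE~\eqref{eq:approximate_effective_equation}, and that it follows once one verifies the two standing hypotheses of the textbook theory: that the approximate coefficients $\vect F_d$ and $\vect A_d$ are globally Lipschitz continuous and of at most linear growth, and that $X_0$ has finite second moment. The moment condition on $X_0$ is assumed directly. The Lipschitz and linear-growth properties of $\vect F_d$ and $\vect A_d$ have already been recorded above: under \cref{assumption:higher_moment_bounds}, the Pardoux--Veretennikov bounds~\eqref{eq:bounds_derivatives_phi} together with the Poincaré inequality imply that the coefficients defined in~\eqref{eq:definition_of_fd_and_ad} are globally Lipschitz, with constants depending on $d$ but fixed once $d$ is fixed. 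The remaining work is thus to assemble the standard argument, for which we refer to~\cite{kloeden1992numerical,MiT04,higham2002strong}.

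First I would establish uniform moment bounds. Using the linear growth of $\vect F_d$ and $\vect A_d$, the Itô isometry (for the continuous solution $X_d(t)$) and a discrete Gronwall argument (for the iterates $X_d^n$), one shows $\expect[\sup_{0 \leq t \leq T}|X_d(t)|^2] < \infty$ and $\max_{n\Delta t \leq T}\expect|X_d^n|^2 \leq C(T)$. These bounds ensure that all quantities appearing below are finite and provide the increment estimate $\expect|X_d(t) - X_d(s)|^2 \leq C(T)|t-s|$ for the continuous interpolant.

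Next I would set up the error recursion. Writing the exact solution of~\eqref{eq:approximate_effective_equation} in integral form over each subinterval $[t_n, t_{n+1}]$ and subtracting the update~\eqref{eq:stochastic_integrator_euler-maruyama}, the one-step error splits into a drift contribution and a diffusion contribution. Each is decomposed into a term involving $X_d^n - X_d(t_n)$, the difference of the two solutions at the grid point, controlled by the global Lipschitz constant, and a term involving $X_d(s) - X_d(t_n)$, the variation of the continuous solution between grid points, which is of size $O(\sqrt{\Delta t})$ by the increment estimate. Applying the Burkholder--Davis--Gundy inequality to the martingale part to pass the supremum inside the expectation, followed by Cauchy--Schwarz and the Itô isometry, yields an estimate of the form $\expect[\sup_{k \leq n}|X_d^k - X_d(t_k)|^2] \leq C\,\Delta t + C\,\Delta t \sum_{k < n}\expect[\sup_{j \leq k}|X_d^j - X_d(t_j)|^2]$. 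A discrete Gronwall inequality then closes the estimate and produces the asserted rate $\sqrt{\Delta t}$ with a constant $C(T)$ growing exponentially in $T$.

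The part that genuinely requires care, rather than the textbook recursion, is the global Lipschitz continuity of the diffusion coefficient $\vect A_d$. Since $\vect A_d$ is recovered as a matrix square root of $\vect D_d$ in~\eqref{eq:definition_of_fd_and_ad}, Lipschitz continuity of $\vect D_d$ does not by itself transfer to $\vect A_d$ unless $\vect D_d$ is uniformly positive definite, so that its symmetric square root is a smooth function of its entries. This is where the structure of $\vect D_d$, in particular the nondegenerate contribution $\int_{\real^n}\vect\alpha\vect\alpha^T(x,y)\,e^{-V}\,dy$ together with a uniform ellipticity assumption, is invoked to guarantee that the square root is well behaved; once this is in place, the Lipschitz property asserted above holds and the remaining steps are the standard ones cited.
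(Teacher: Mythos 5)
Your proposal is correct and is essentially the paper's own approach: the paper gives no proof of this theorem at all, deferring to the classical strong-convergence theory for the Euler--Maruyama scheme in \cite{kloeden1992numerical,MiT04,higham2002strong}, which is precisely the argument you assemble (moment bounds, one-step error split into Lipschitz and increment terms, Burkholder--Davis--Gundy plus It\^o isometry, discrete Gronwall) after checking the global Lipschitz/linear-growth hypotheses and $\expect|X_0|^2<\infty$. Your further observation --- that Lipschitz continuity of $\vect A_d$ does not follow from that of $\vect D_d$ unless $\vect D_d$ is uniformly positive definite, since the matrix square root is only H\"older-$1/2$ near degeneracy --- is a point the paper itself glosses over (it asserts the Lipschitz property of the approximate coefficients and computes $\vect A_d$ by Cholesky factorization without stating an ellipticity hypothesis), so flagging it is a refinement of, not a deviation from, the paper's argument.
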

Combined, \cref{theorem:approximation_convergence_sde} and \cref{theorem:convergence_of_euler-maruyama} imply
the weak convergence of the solution of~\eqref{eq:stochastic_integrator_euler-maruyama} to
the solution of the homogenized equation~\eqref{eq:exact_effective_equation}.

\section{Proofs of the Main Results}
\label{sec:analysis_of_the_method}

\subsection{Convergence of the Spectral Method for the Poisson Equation}
\label{sub:convergence_of_the_solution_of_the_cell_problem}
In this section we establish the convergence of the spectral method for the solution of the Poisson equation \eqref{eq:weak_form_pde_cell_problem}.
Since the variable $x$ only appears as a parameter in the Poisson equation,
we will consider in this section that it takes an arbitrary value and will omit it from the notation.
Additionally, to disencumber ourselves of vectorial notations,
we will consider an arbitrary direction of $\real^n$, defined through a unit vector $\vect e$,
and denote by $f$ the projection $\vect f \cdot \vect e$.

We recall from \cite{pardoux2001poisson, pardoux2003poisson} that
there exists a unique smooth mean-zero function of $\phi \in \sobolev{1}{\real^n}[e^{-V}]$ satisfying the variational formulation
\begin{equation}
    \label{eq:variational_formulation_in_weighted_space}
    a_V(\phi,v) := \ip{\grad{\phi}}{\grad{v}}[e^{-V}] = \ip{f}{v}[e^{-V}] \quad \forall v \in \sobolev{1}{\real^n}[e^{-V}].
\end{equation}
We now define a finite-dimensional subset $S_d$ of $\sobolev{1}{\real^n}[e^{-V}]$ by $S_d = e^{V/2} \hat S_d$,
where $\hat S_d$ is the approximation space defined in eq.~\eqref{eq:finite_dimensional_space_of_galerkin_method},
and consider the following problem:
find $\phi_d\in S_d$ satisfying:
\begin{equation}
    \label{eq:approximate_cell_problem_weighted}
    a_V(\phi_d,v_d)= \ip{f}{v_d}[e^{-V}] \quad \forall v_d \in S_d.
\end{equation}
Note that, by definition of $f$,
$\phi = \vect \phi \cdot \vect e$ and $\phi_d = \vect \phi_d \cdot \vect e$.
The convergence of $\phi_d$ to $\phi$ can be obtained using techniques from the theory of finite elements,
in particular C\'ea's lemma and  an approximation argument.
We will use the notation that was introduced at the beginning \cref{sec:main_results}.
\begin{lemma}[C\'ea's lemma]
   \label{lemma:modified_cea}
    Let $\phi$ be the solution of~\eqref{eq:variational_formulation_in_weighted_space} satisfying $\mathcal M(\phi)=0$ and $\phi_d$ be a solution of~\eqref{eq:approximate_cell_problem_weighted}.
    Then,
    \[
        \norm{\phi - \pi(\phi_d)}[1][e^{-V}] \leq C \inf_{v_d \in S_d} \norm{\phi-v_d}[1][e^{-V}].
    \]
\end{lemma}
\begin{proof}
    The main ingredient of the proof is a Poincar\'e inequality for the measure $e^{-V} dx = \mu(dx)$ recalled in \cref{sec:weighted_sobolev_spaces}, \cref{proposition:poincare_inequality_weighted_measure}.
    From this inequality, we obtain the coercivity estimate $c \,a(v,v) \geq \norm{\pi(v)}[1][e^{-V}]^2$ for all $v \in \sobolev{1}{\real^n}[e^{-V}]$.
    Combining this with Galerkin orthogonality, $a(\phi-\phi_d, v_d) = 0$ for all $v_d \in S_d$ and the continuity estimate $a(v_1,v_2) \leq \norm{v_1}[1][e^{-V}] \norm{v_2}[1][e^{-V}]$ for all $v_1,v_2 \in \sobolev{1}{\real^n}[e^{-V}]$ gives the result.
\end{proof}

Since we will be working mostly with the Schr\"{o}dinger formulation of Poisson equation, we need an analogue of~\cref{lemma:modified_cea} for the transformed PDE.
We recall from \cref{sec:weighted_sobolev_spaces} that the space $\sobolev{1}{\real^n}[\operator{H}]$ is equipped with the norm
\[
    \norm{\psi}[1][\operator{H}]^{2}  = \norm{\psi}[0]^2 + \int_{\real^n} \abs{\grad{\psi}}^2 \,dy + \int_{\real^n} W \psi^2\,dy.
\]
\begin{lemma}
    \label{lemma:modified_cea_flat}
    Let $\vect \psi$ be the unique solution of~\eqref{eq:weak_form_pde_cell_problem_h} satisfying $\hat{\mathcal{M}}(\psi)=0$ and $\vect \psi_d$ be a solution of~\eqref{eq:weak_form_pde_cell_problem_num}.
    Then the projections $\psi = \vect \psi \cdot \vect e$ and $\psi_d = \vect \psi_d \cdot \vect e$ satisfy
    \begin{equation}
        \norm{\psi - \hat{\pi}(\psi_d)}[1][\operator{H}] \leq C \inf_{v_d \in \hat S_d} \norm{\psi-v_d}[1][\operator{H}].
        \label{eq:modified_cea_flat}
    \end{equation}
\end{lemma}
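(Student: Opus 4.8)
The plan is to exploit the fact that the flat (Schrödinger) formulation \eqref{eq:weak_form_pde_cell_problem_h} is the image of the weighted formulation \eqref{eq:variational_formulation_in_weighted_space} under the unitary map $e^{-V/2}$, so that the present lemma is merely the isometric transcription of \cref{lemma:modified_cea}. Concretely, I would first record that $\psi = e^{-V/2}\phi$, that $\hat S_d = e^{-V/2} S_d$, that $a(e^{-V/2}\phi, e^{-V/2}v) = a_V(\phi, v)$ for all $\phi,v \in \sobolev{1}{\real^n}[e^{-V}]$, and that the right-hand sides of \eqref{eq:weak_form_pde_cell_problem_h} and \eqref{eq:variational_formulation_in_weighted_space} correspond under this change of unknown. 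Consequently $\psi$ solves \eqref{eq:weak_form_pde_cell_problem_h} with $\hat{\mathcal{M}}(\psi)=0$ if and only if $\phi = e^{V/2}\psi$ solves \eqref{eq:variational_formulation_in_weighted_space} with $\mathcal M(\phi)=0$, and $\psi_d$ solves \eqref{eq:weak_form_pde_cell_problem_num} if and only if $\phi_d = e^{V/2}\psi_d \in S_d$ solves \eqref{eq:approximate_cell_problem_weighted}. Granting that $e^{-V/2}$ is an isometry from $(\sobolev{1}{\real^n}[e^{-V}], \norm{\cdot}[1][e^{-V}])$ onto $(\sobolev{1}{\real^n}[\operator{H}], \norm{\cdot}[1][\operator{H}])$ intertwining $\pi$ with $\hat{\pi}$, the estimate \eqref{eq:modified_cea_flat} follows at once by applying $e^{-V/2}$ to the conclusion of \cref{lemma:modified_cea}.

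The step carrying the real content, and the one I expect to be the main obstacle, is the norm identity $\norm{e^{-V/2}\phi}[1][\operator{H}] = \norm{\phi}[1][e^{-V}]$; it is a direct computation rather than a conceptual difficulty, but it is where the structure is used. Writing $\psi = e^{-V/2}\phi$, one has $\grad\psi = e^{-V/2}(\grad\phi - \tfrac12\phi\,\grad V)$, whence
\[
    \int_{\real^n} \abs{\grad\psi}^2 + W\psi^2 \, dy = \int_{\real^n}\left(\abs{\grad\phi}^2 - \phi\,\grad\phi\cdot\grad V + \left(\tfrac14\abs{\grad V}^2 + W\right)\phi^2\right) e^{-V}\,dy.
\]
Integrating the cross term by parts against the weight $e^{-V}$ produces $\tfrac12\int_{\real^n} \phi^2(\Delta V - \abs{\grad V}^2)e^{-V}\,dy$, and with the definition $W = \tfrac14\abs{\grad V}^2 - \tfrac12\Delta V$ from \eqref{def:H} the zeroth-order contributions cancel exactly, leaving $\int_{\real^n}\abs{\grad\psi}^2 + W\psi^2\,dy = \int_{\real^n}\abs{\grad\phi}^2 e^{-V}\,dy$. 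Since also $\norm{\psi}[0] = \norm{\phi}[e^{-V}]$, the two $H^1$ norms coincide. The intertwining of the projections is then immediate from $\ip{\psi}{e^{-V/2}}[0] = \ip{\phi}{1}[e^{-V}]$, which matches \eqref{eq:projection_in_flat} with \eqref{eq:projection_in_weighted_space}.

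Alternatively, and more in keeping with the proof of \cref{lemma:modified_cea}, I could argue directly in the Schrödinger variables, transferring the three ingredients of Céa's argument. Galerkin orthogonality $a(\psi - \psi_d, v_d) = 0$ for all $v_d \in \hat S_d$ is obtained by subtracting \eqref{eq:weak_form_pde_cell_problem_num} from \eqref{eq:weak_form_pde_cell_problem_h}. Continuity $\abs{a(v_1,v_2)} \leq C\norm{v_1}[1][\operator{H}]\norm{v_2}[1][\operator{H}]$ follows from Cauchy--Schwarz once one notes that $W$ is bounded below (being continuous and tending to $+\infty$), so that $\int_{\real^n}\abs{W}v^2\,dy \leq \int_{\real^n} W v^2\,dy + 2m_0\norm{v}[0]^2$ is controlled by $\norm{v}[1][\operator{H}]^2$. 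Finally, the coercivity estimate $c\,a(v,v) \geq \norm{\hat{\pi}(v)}[1][\operator{H}]^2$ is the transcription of \cref{proposition:poincare_inequality_weighted_measure}: the Poincaré inequality for $e^{-V}$ becomes the spectral gap $a(v,v) \geq \lambda\norm{v}[0]^2$ for $v$ orthogonal to $e^{-V/2}$ (the kernel of $\operator{H}$), and since $\norm{v}[1][\operator{H}]^2 = \norm{v}[0]^2 + a(v,v)$ this yields coercivity on the range of $\hat{\pi}$. With these three facts the Céa argument of \cref{lemma:modified_cea} goes through verbatim, the role of $\pi$ being played by $\hat{\pi}$.
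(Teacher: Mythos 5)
Your proposal is correct and takes essentially the same route as the paper: the paper's proof consists precisely of invoking the fact that $e^{-V/2}$ is a unitary transformation from $\sobolev{1}{\real^n}[e^{-V}]$ to $\sobolev{1}{\real^n}[\operator{H}]$ and transferring \cref{lemma:modified_cea}, which is what your first argument does, with the norm identity and the intertwining $e^{-V/2}\pi = \hat{\pi}\,e^{-V/2}$ spelled out explicitly (the paper relegates this computation to the unitarity proposition in \cref{sec:weighted_sobolev_spaces}). Your alternative direct C\'ea argument in the Schr\"odinger variables is also valid but is not needed once the unitarity is in hand.
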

\begin{proof}
    The result follows directly by using the fact that $e^{-V/2}$ is also a unitary transformation from $\sobolev{1}{\real^n}[e^{-V}]$ to $\sobolev{1}{\real^n}[\operator{H}]$.
\end{proof}

Next, we focus on establishing a result that will allow us to control the right-hand side of~\eqref{eq:modified_cea_flat}.
In~\cite[Lemma 2.3]{gagelman2012spectral} the authors show that any smooth square integrable function such that $(-\Delta + W)v = g$ lies in the space $E(\real^n)$ introduced in~\eqref{e:exponential_decrease_L2},
provided that $g\in E(\real^n)$ and that Assumption~\eqref{eq:assumption_w} holds.
Differentiating the equation with respect to $y_i$, we obtain:
\[
    (-\Delta + W)\,\partial_{y_i} v = \partial_{y_i} g - \partial_{y_i}W\,v,
\]
so it is clear by \cref{assumption:potential} that $\partial_{\alpha} \psi \in E(\real^n)$ for all values of $\alpha \in \nat^n$.
This implies that $\psi$ belongs to the Schwartz space $\schwartz{\real^n}$.
We now generalize sligthly~\cite[Lemma 3.1]{gagelman2012spectral}.
This result will enable to control the norm $\norm{\cdot}[1][\operator{H}]$ on the right-hand side of~\eqref{eq:modified_cea_flat} by a norm $\norm{\cdot}_{k,\operator{H}_{\mu,\Sigma}}$,
where $\operator{H}_{\mu,\Sigma}$ is an operator defined in \cref{sec:weighted_sobolev_spaces}.
From this appendix, we recall that the operator $\operator H_{\mu,\Sigma}$,
with $\mu \in \real^n$ and $\Sigma$ a symmetric positive definite matrix,
is defined by $\operator{H}_{\mu,\Sigma} = -\Delta + W_{\mu,\Sigma}(y)$,
where $W_{\mu,\Sigma}$ denotes the quadratic function $(y-\mu)^T \Sigma^{-2} (y-\mu)/4 - \trace{\Sigma^{-1}}/2$.
\begin{lemma}
\label{lemma:bound_by_gaussian_norm}
    For every $k \in \nat$ and $v \in \schwartz{\real^n}$,
    \[
        \int_{\real^n}\abs{y}^{4k} v^2(y) \, dy \leq C(k, \mu, \Sigma) \norm{v}[2k][\operator{H}_{\mu,\Sigma}]^2,
    \]
    where $C(k,\mu,\Sigma)$ is a constant independent of $v$.
    \begin{proof}
        We set $Q_{\mu,\Sigma} = (y-\mu)^T \Sigma^{-2} (y-\mu)/4$.
        Following the methodology used to prove lemma 3.1 in \cite{gagelman2012spectral}, we establish that:
        \[
            \norm{Q_{\mu,\Sigma}(y)^{k+1} v}[0]^2 \leq \norm{Q_{\mu,\Sigma}(y)^{k}\, \left(\operator{H}_{\mu,\Sigma} + {\trace{\Sigma^{-1}}}/{2}\right) v}[0]^2  + C_1(k,\Sigma) \norm{Q_{\mu,\Sigma}(y)^{k}v}[0]^2,
        \]
        for all $k\in \nat$, and where $C_1(k,\Sigma) = (4k + 2)(k\,\rho(\Sigma^{-2}) + \trace{\Sigma^{-2}}/4)$.
        Reasoning by recursion and applying the triangle inequality, this immediately implies
        \[
            \begin{aligned}
                \norm{Q_{\mu,\Sigma}(y)^{k} v}[0]^2 &\leq \sum_{i=0}^{k} c_i(k,\Sigma)\, \norm{\left(\operator{H}_{\mu,\Sigma} + {\trace{\Sigma^{-1}}}/{2}\right)^i v}[0]^2  \\
                &\leq C_2(k,\Sigma) \norm{v}[2k][\operator{H}_{\mu,\Sigma}]^2,
            \end{aligned}
        \]
        To conclude, note that
        \[
            \abs{y}^{4k} \,\leq\, C_3 + C_4 \, Q_{\mu,\Sigma}(y)^{2k},
        \]
        for suitably chosen $C_3$ and $C_4$ depending on $\Sigma$ and $\mu$.
    \end{proof}
    \label{lemma:approximation_by_hermite_functions_in_other_weighted_space}
\end{lemma}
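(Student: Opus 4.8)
The plan is to reduce the weighted integral to a bound on $\norm{Q_{\mu,\Sigma}^k v}[0]$, where $Q_{\mu,\Sigma}(y) := (y-\mu)^T\Sigma^{-2}(y-\mu)/4$ is the quadratic part of the potential, and then to control $\norm{Q_{\mu,\Sigma}^k v}[0]$ by a recursion in $k$ that trades one factor of $Q_{\mu,\Sigma}$ for one application of the shifted operator $\operator{A} := \operator{H}_{\mu,\Sigma} + \trace{\Sigma^{-1}}/2 = -\Delta + Q_{\mu,\Sigma}$. The last reduction is elementary: since $Q_{\mu,\Sigma} \geq \lambda_{\min}(\Sigma^{-2})\,\abs{y-\mu}^2/4 \geq 0$ grows quadratically, there are constants $C_3,C_4$ with $\abs{y}^{4k} \leq C_3 + C_4\,Q_{\mu,\Sigma}(y)^{2k}$ pointwise, whence $\int_{\real^n}\abs{y}^{4k}v^2\,\d y \leq C_3\norm{v}[0]^2 + C_4\norm{Q_{\mu,\Sigma}^k v}[0]^2$, and it remains to estimate the second term.

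The heart of the matter is the one-step estimate $\norm{Q_{\mu,\Sigma}^{k+1}v}[0]^2 \leq \norm{Q_{\mu,\Sigma}^k\operator{A}v}[0]^2 + C_1(k,\Sigma)\,\norm{Q_{\mu,\Sigma}^k v}[0]^2$. First I would expand, using $\operator{A}v = Q_{\mu,\Sigma}v - \Delta v$,
\[
\norm{Q_{\mu,\Sigma}^k\operator{A}v}[0]^2 = \norm{Q_{\mu,\Sigma}^{k+1}v}[0]^2 - 2\ip{Q_{\mu,\Sigma}^{k+1}v}{Q_{\mu,\Sigma}^k\Delta v}[0] + \norm{Q_{\mu,\Sigma}^k\Delta v}[0]^2,
\]
and solve for $\norm{Q_{\mu,\Sigma}^{k+1}v}[0]^2$. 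Because $v\in\schwartz{\real^n}$ there are no boundary contributions, and integrating the cross term by parts twice gives $2\ip{Q_{\mu,\Sigma}^{k+1}v}{Q_{\mu,\Sigma}^k\Delta v}[0] = \int_{\real^n} v^2\,\Delta(Q_{\mu,\Sigma}^{2k+1})\,\d y - 2\int_{\real^n} Q_{\mu,\Sigma}^{2k+1}\abs{\grad v}^2\,\d y$. The last integral together with the term $-\norm{Q_{\mu,\Sigma}^k\Delta v}[0]^2$ are both nonpositive (recall $Q_{\mu,\Sigma}\geq0$) and are discarded. For the surviving term I would use that $Q_{\mu,\Sigma}$ is quadratic, so that $\Delta(Q_{\mu,\Sigma}^{2k+1}) = (2k+1)\,Q_{\mu,\Sigma}^{2k}\,\trace{\Sigma^{-2}}/2 + 2k(2k+1)\,Q_{\mu,\Sigma}^{2k-1}\,\abs{\grad Q_{\mu,\Sigma}}^2$, combined with the pointwise comparison $\abs{\grad Q_{\mu,\Sigma}}^2 \leq \rho(\Sigma^{-2})\,Q_{\mu,\Sigma}$, valid since $\Sigma^{-4}\preceq\rho(\Sigma^{-2})\,\Sigma^{-2}$. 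This produces precisely $C_1(k,\Sigma) = (4k+2)\bigl(k\,\rho(\Sigma^{-2}) + \trace{\Sigma^{-2}}/4\bigr)$.

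With the one-step estimate established, I would iterate it from $k$ down to $0$, applying the triangle inequality at each stage, to bound $\norm{Q_{\mu,\Sigma}^k v}[0]^2$ by a finite sum $\sum_{i=0}^{k} c_i(k,\Sigma)\,\norm{\operator{A}^i v}[0]^2$; expanding the binomial $\operator{A}^i = \bigl(\operator{H}_{\mu,\Sigma} + \trace{\Sigma^{-1}}/2\bigr)^i$ and using self-adjointness and nonnegativity of $\operator{H}_{\mu,\Sigma}$ then bounds this sum by $C_2(k,\Sigma)\,\norm{v}[2k][\operator{H}_{\mu,\Sigma}]^2$ via the definition of the operator scale norm recalled in \cref{sec:weighted_sobolev_spaces}. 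Together with the reduction of the first paragraph this yields the claim with $C(k,\mu,\Sigma) = C_3 + C_4\,C_2(k,\Sigma)$. I expect the one-step estimate to be the main obstacle: the key decisions are to expand $\norm{Q_{\mu,\Sigma}^k\operator{A}v}[0]^2$ rather than to commute $Q_{\mu,\Sigma}^k$ through $\operator{A}$, to perform the two integrations by parts so that the troublesome gradient terms appear with a favorable sign and can be dropped, and to exploit the quadratic structure of $Q_{\mu,\Sigma}$ so that the constant comes out at the sharp size claimed; the subsequent recursion and the passage to the operator scale norm are routine.
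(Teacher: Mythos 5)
Your proposal is correct and takes essentially the same route as the paper's proof: the pointwise reduction $\abs{y}^{4k} \leq C_3 + C_4\, Q_{\mu,\Sigma}(y)^{2k}$, the one-step estimate trading a factor of $Q_{\mu,\Sigma}$ for an application of $\operator{H}_{\mu,\Sigma} + \trace{\Sigma^{-1}}/2$, and the recursion leading into the scale norm $\norm{\cdot}[2k][\operator{H}_{\mu,\Sigma}]$. The only difference is that where the paper defers the one-step estimate to the methodology of the cited reference, you derive it in full (double integration by parts, discarding the two sign-definite terms, and the comparison $\abs{\grad Q_{\mu,\Sigma}}^2 \leq \rho(\Sigma^{-2})\, Q_{\mu,\Sigma}$), recovering exactly the constant $C_1(k,\Sigma) = (4k+2)\bigl(k\,\rho(\Sigma^{-2}) + \trace{\Sigma^{-2}}/4\bigr)$ stated there.
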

A finer version of the previous inequality could be obtained by following the argumentation of  in \cite[Theorem 3.2]{gagelman2012spectral},
but this will not be necessary for our purposes.
\Cref{lemma:bound_by_gaussian_norm} can be used to show the following result.
\begin{lemma}
If $W(y)$ is bounded above by a polynomial of degree $4k$, there exists a constant $C$ depending on $k$, $\mu$, $\Sigma$, and $W$ such that any $v \in \schwartz{\real^n}$ satisfies
\[
    \norm{v}[1][\operator{H}] \leq C \, \norm{v}[2k][\operator{H}_{\mu,\Sigma}].
\]

\end{lemma}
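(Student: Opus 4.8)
The plan is to bound separately the three contributions to
\[
    \norm{v}[1][\operator H]^2 = \norm{v}[0]^2 + \int_{\real^n} \abs{\grad v}^2 \, dy + \int_{\real^n} W\, v^2 \, dy,
\]
each by a constant multiple of $\norm{v}[2k][\operator{H}_{\mu,\Sigma}]^2$. The zeroth-order term requires nothing, since the operator scale of Sobolev spaces satisfies $\norm{v}[0] \le \norm{v}[2k][\operator{H}_{\mu,\Sigma}]$ by definition.

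For the potential term I would use the hypothesis directly. Since $W$ is dominated by a polynomial of degree $4k$, we have $W(y) \le C(1 + \abs{y}^{4k})$, and as $v^2 \ge 0$ this gives
\[
    \int_{\real^n} W\, v^2 \, dy \le C \norm{v}[0]^2 + C \int_{\real^n} \abs{y}^{4k}\, v^2 \, dy.
\]
The second integral is exactly the quantity estimated in \cref{lemma:bound_by_gaussian_norm}, which bounds it by $C(k,\mu,\Sigma)\,\norm{v}[2k][\operator{H}_{\mu,\Sigma}]^2$; here the polynomial degree $4k$ is matched to the order $2k$ of the operator norm, and this is precisely the step for which the preceding lemma was tailored.

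The step I expect to take the most care is the gradient term, because $\int \abs{\grad v}^2$ is not itself a weighted $L^2$ quantity and must be related to $\operator{H}_{\mu,\Sigma}$. I would integrate by parts (legitimate since $v \in \schwartz{\real^n}$) and substitute $-\Delta = \operator{H}_{\mu,\Sigma} - W_{\mu,\Sigma}$, writing $W_{\mu,\Sigma} = Q_{\mu,\Sigma} - \trace{\Sigma^{-1}}/2$ with $Q_{\mu,\Sigma} \ge 0$, to obtain
\[
    \int_{\real^n} \abs{\grad v}^2 \, dy = \ip{\operator{H}_{\mu,\Sigma} v}{v}[0] - \int_{\real^n} W_{\mu,\Sigma}\, v^2 \, dy \le \norm{\operator{H}_{\mu,\Sigma} v}[0]\,\norm{v}[0] + \frac{\trace{\Sigma^{-1}}}{2}\,\norm{v}[0]^2.
\]
Here the nonnegative term $\int Q_{\mu,\Sigma}\, v^2 \, dy$ is discarded and Cauchy--Schwarz is applied; for $k \ge 1$ (which we may always arrange by enlarging $k$) the norm $\norm{v}[2k][\operator{H}_{\mu,\Sigma}]$ controls both $\norm{v}[0]$ and $\norm{\operator{H}_{\mu,\Sigma} v}[0]$, so the right-hand side is $\le C\,\norm{v}[2k][\operator{H}_{\mu,\Sigma}]^2$. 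Summing the three bounds and taking square roots yields the claim. The essential point---and the only nontrivial one---is that after integrating by parts it is the auxiliary operator $\operator{H}_{\mu,\Sigma}$ that appears, so that its positive part $Q_{\mu,\Sigma}$ can be dropped and its bounded-below shift absorbed into the $L^2$ term, whereas the genuine potential $W$ enters only through the polynomial comparison already handled by \cref{lemma:bound_by_gaussian_norm}.
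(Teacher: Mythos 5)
Your proof is correct, and its essential ingredient---taming the polynomially bounded potential $W$ via \cref{lemma:bound_by_gaussian_norm}---is exactly the one the paper uses. Where you differ is the decomposition. The paper never splits $\norm{v}[1][\operator{H}]^2$ into its three constituents; it compares the two $H^1$ norms directly, writing $\norm{v}[1][\operator{H}]^2 = \norm{v}[1][\operator{H}_{\mu,\Sigma}]^2 + \int_{\real^n}(W - W_{\mu,\Sigma})\,v^2\,dy$, so that the $L^2$ and gradient contributions cancel identically between the two sides. The first term is then $\leq \norm{v}[2k][\operator{H}_{\mu,\Sigma}]^2$ by monotonicity of the scale norms, and only the potential difference, bounded pointwise by $C_1 + C_2\abs{y}^{4k}$, requires the lemma. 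Your route instead controls the gradient term by hand: integration by parts, the substitution of $\operator{H}_{\mu,\Sigma}$, discarding $\int Q_{\mu,\Sigma}v^2 \geq 0$, Cauchy--Schwarz, and the observation that $\norm{\operator{H}_{\mu,\Sigma}v}[0] \leq \norm{v}[2k][\operator{H}_{\mu,\Sigma}]$ once $k \geq 1$ (a harmless assumption, as you note, since a degree-$4k$ bound may always be weakened to a higher degree). All of these steps are valid, but they are work that the paper's bookkeeping renders unnecessary: once one recognizes that $\norm{v}[0]^2 + \int_{\real^n}\abs{\grad v}^2\,dy + \int_{\real^n} W_{\mu,\Sigma}\,v^2\,dy$ is precisely $\norm{v}[1][\operator{H}_{\mu,\Sigma}]^2$, your zeroth-order and gradient terms are already absorbed into a quantity dominated by $\norm{v}[2k][\operator{H}_{\mu,\Sigma}]^2$, with no integration by parts or choice of $k\geq 1$ needed. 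What your version buys in exchange is self-containedness---it uses only the quadratic-form meaning of the operators and the definition of the scale norms, rather than the identification of the $H^1(\operator{H}_{\mu,\Sigma})$ norm recalled in \cref{sec:weighted_sobolev_spaces}---at the cost of a slightly longer argument and an extra constant ($\trace{\Sigma^{-1}}/2$) to absorb.
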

\begin{proof}
This follows from the considerations of \cref{sec:weighted_sobolev_spaces}.
First we note that
\[
    \norm{v}[1][\operator{H}]^2 = \norm{v}[1][\operator{H}_{\mu,\Sigma}]^2 + \int_{\real^n} (W - W_{\mu,\Sigma}) v^2\,dy.
\]
To bound the second term, we use~\cref{assumption:potential} on $W$, together with \cref{lemma:bound_by_gaussian_norm}:
\[
    \int_{\real^n} (W - W_{\mu,\Sigma}) v^2 \,dy\,\leq\, \int_{\real^n} (C_1 + C_2 \, \abs{y}^{4k}) v^2 \,dy \leq\, C_3 \norm{v}[2k][\operator{H}_{\mu,\Sigma}]^2,
\]
with $C_1$, $C_2$, $C_3$ depending on $k$, $\mu$, $\Sigma$.
\end{proof}

Upon combining the results presented so far in this section, we can complete the proof of \cref{theorem:spectral_convergence_of_the_hermite_galerkin_method}.

\begin{proof}[Proof of \cref{theorem:spectral_convergence_of_the_hermite_galerkin_method}]
    By \cref{lemma:modified_cea_flat,lemma:bound_by_gaussian_norm}, and the fact that the exact solution $\psi$ and its derivatives are smooth and decrease faster than exponentials, we have:
    \[
        \norm{\psi-\hat{\pi}(\psi_d)}[1][\operator{H}] \leq C \inf_{v_d \in \hat S_d} \norm{\psi-v_d}[1][\operator{H}] \leq C \inf_{v_d \in \hat S_d} \norm{\psi -v_d}[2k][\operator{H}_{\mu,\Sigma}].
    \]
    Using \cref{corollary:approximation_by_hermite_functions} on approximation by Hermite functions, we have for any $s > 2k$
    \begin{align*}
        \norm{\psi-\hat{\pi}(\psi_d)}[1][\operator{H}] & \leq C (d+1)^{-\frac{s-2k}{2}} \norm{\psi}[s][\operator{H}_{\mu,\Sigma}], \\
        & \leq \, C (d+1)^{-\frac{s-2k}{2}},
    \end{align*}
    where we used the first estimate of~\eqref{eq:bounds_on_weighted_norms}
    and the fact that $\norm{\psi}[s][\operator{H}_{\mu,\Sigma}]=\norm{\phi}[s][\operator{L}_{\mu,\Sigma}]$.
    The same reasoning can be applied to $\nabla_x \psi$.
    Since $s$ was arbitrary, this proves the statement.
\end{proof}

\subsection{Convergence of the Drift and Diffusion Coefficients}
\label{sub:convergence_of_the_coefficients}
In this section we prove the convergence of the drift and diffusion coefficients obtained from the approximate solution of the Poisson equation.
\begin{proof}[Proof of \cref{theorem:convergence_of_drift_and_diffusion_coefficients}]
From the expressions of $\vect F$ and $\vect F_d$ we have:
\begin{equation*}
    \vect F(x)-\vect F_d(x) = \int_{\real^n} \left[ \grad_x \vect \psi^x \cdot (\vect f^x \, e^{-V/2})-\grad_x \vect \psi_d^x \cdot (\vect f^x_d \, e^{-V/2}) \right] dy
\end{equation*}
where $\vect f_d^x \, e^{-V/2}$ is the $\lp{2}{\real^n}$-projection of $\vect f^x\,e^{-V/2}$ on the space spanned by Hermite functions with multi-index $\alpha$ such that $\abs{\alpha} \leq d$.
Clearly, $\int_{\real^n} \grad_x \vect \psi_d^x \cdot ( \vect f^x_d \, e^{-V/2} ) \, dy = \int_{\real^n} \grad_x \vect \psi_d^x \cdot ( \vect f^x \, e^{-V/2} )\,dy$,
and so using \cref{theorem:spectral_convergence_of_the_hermite_galerkin_method} together with the Cauchy-Schwarz inequality we deduce that there exists for any value of $s \in \nat$ a constant $C(s)$ such that
\begin{equation*}
    \begin{aligned}
        |\vect F_d(x)-\vect F(x)| &\leq \norm{\nabla_x \vect \psi^x - \nabla_x \vect \psi^x_d}[0]\,\norm{\vect f^x \, e^{-V/2}}[0] \\
        &\leq C(s) \, d^{-s} \norm{\vect f^x}[e^{-V}].
    \end{aligned}
\end{equation*}
The error on the diffusion term can be bounded similarly:
\begin{align*}
    \abs{\vect A_{0d}(x) - \vect A_0(x)} & = \int_{\real^n} (\vect \psi^x_d - \vect \psi^x) \otimes (\vect f^x \, e^{-V/2})\, dy \\
    & \leq C(s) \, d^{-s}\,\norm{\vect f^x}[e^{-V}].
\end{align*}
The proof can then be concluded using the last bound from~\eqref{eq:bounds_on_weighted_norms}.
\end{proof}

\subsection{Convergence of the Solution to the SDE}
\label{sub:strong_convergence_of_the_solution_of_the_sde}
As we have already mentioned, homogenization/diffusion approximation theorems are generally of the weak convergence type.
Furthermore, the effective diffusion coefficient of the simplified equation is not uniquely defined---see Equation~\eqref{eq:definition_of_D} and the fact that $\vect D(x) = \vect A(x)\vect A(x)^T $.
Consequently, it is not clear whether it is useful to prove the strong convergence of the solution to the approximate SDE~\eqref{eq:approximate_effective_equation} to the solution to the homogenized SDE~\eqref{eq:exact_effective_equation}.
However, by calculating $\vect A_d(x)$ by Cholesky factorization,
the difference $\abs{\vect A_d(x) - \vect A(x)}$ converges to $0$ faster than any negative power of $d$,
as is the case for $\abs{\vect A_d(x)\vect A_d(x)^T  - \vect A(x)\vect A(x)^T}$.
For this particular choice, it is possible to prove strong convergence of
solutions of~\eqref{eq:approximate_effective_equation} to the solution of~\eqref{eq:exact_effective_equation},
from which  weak convergence follows.
This is the approach taken in this section.

The argument we propose is based on the proof of the strong convergence for the Euler-Maruyama scheme in \cite[Theorem 2.2]{higham2002strong}.
Recall that by~\eqref{eq:bounds_on_weighted_norms}, there exists a Lipschitz constant $C_L$ such that
\begin{equation}
    \label{eq:lipschitz_result_for_proof_in_time}
    \left|\vect F(a)-\vect F(b)\right|^2\,\vee\,\left|\vect A(a)-\vect A(b)\right|^2\, \leq C_L|a-b|^2,
\end{equation}
for all $a,b \in \real^m$, and by \cref{theorem:convergence_of_drift_and_diffusion_coefficients}
there exists for every $s \in \nat$ a constant $D(s)$ independent of $d$ and $x$ such that
\begin{equation}
    \label{eq:convergence_coefficients_for_proof_in_time}
    \left|\vect F_d(x)-\vect F(x)\right|^2\,{\vee}\,\left|\vect A_d(x) - \vect A(x)\right|^2\,  \leq \, D(s) \,d^{-s},
\end{equation}
for any $x \in \real^m$.
Upon combining~\eqref{eq:lipschitz_result_for_proof_in_time} and~\eqref{eq:convergence_coefficients_for_proof_in_time}, \cref{theorem:approximation_convergence_sde} can be proved.
\begin{proof}[Proof of \cref{theorem:approximation_convergence_sde}]
    The error $e_d(t)\,=\,X(t)\,-\,X_d(t)$ satisfies
    \[
        e_d(t) \,=\, \int_0^{t} \vect F(X(\tau))\,-\,\vect F_d(X_d(\tau))\, d\tau \,+\, \int_0^{t} \vect A(X(\tau))\,-\,\vect A_d(X_d(\tau))\,\mathrm dW_\tau.
    \]
    Using the inequality $(a+b)^2\, {\leq} \,2a^2\,+\,2b^2$ and Cauchy-Schwarz, we have
    \begin{equation}
        \begin{aligned}
            \expect \left[ \sup_{0 \,\leq\, t \,\leq\, T} |e_d(t)|^2\right] \,& \leq\,  2\,T\,\expect\,\left[ \int_0^{T} |\vect F(X(\tau))\,-\,\vect F_d(X_d(\tau))|^2 \,\mathrm d\tau\right] \\
        &+\, 2\,\expect \left[\sup_{0 \leq t \leq T} \left| \int_0^{t} \vect A(X(\tau))\,-\,\vect A_d(X_d(\tau))\,\mathrm dW_\tau \right|^2\right].
        \end{aligned}
    \label{eq:main eq global}
    \end{equation}
    The first term in the right-hand side can be bounded by using the triangle inequality with the decomposition $\vect F(X(\tau))-\vect F_d(X_d(\tau)) = (\vect F(X(\tau)) - \vect F(X_d(\tau))) + (\vect F(X_d(\tau)) - \vect F_d(X_d(\tau)))$,
    the Lipschitz continuity of $\vect F(\cdot)$ and the convergence of $\vect F_d$ to $\vect F$:
    \begin{equation}
            \begin{aligned}
                & \expect \left[\int_0^{T} |\vect F(X(\tau))\,-\,\vect F_d(X_d(\tau))|^2\,\mathrm d\tau \right] \\
                & \qquad \qquad \leq \expect \left[2\,D(s)\,T\,d^{-s}\,+\,2\,C_L\,\int_0^{T}|X(\tau)\,-\,X_d(\tau)|^2\,d\tau\right] \\
                & \qquad \qquad \leq 2\,D(s)\,T\,d^{-s}\,+\,2\,C_L\,\int_0^{T} \expect\left[\sup_{0\, \leq \,t\, \leq \,\tau}|e_d(t)|^2\right]\,d\tau
            \end{aligned}
    \label{eq:1st term main eq}
    \end{equation}
    The second term can be bounded in a similar manner by using Burkholder–Davis–Gundy inequality, see for example~\cite[Theorem 3.28]{MR1121940}, and It\^o isometry :
    \begin{equation}
        \begin{aligned}
            &\expect \left[\sup_{0\, \leq \,t \leq \,T} \left|\int_0^{t} \vect A(X(\tau))\,-\,\vect A_d(X_d(\tau)) \,\mathrm dW_\tau \right|^2\right]  \\
            & \qquad \qquad \leq \left|\int_0^{T} \vect A\,(X(\tau))\,-\,\vect A_d(X_d(\tau))\, \mathrm dW_\tau\right|^2 \\
            & \qquad \qquad \leq \, 8\,D(s)\,T \,d^{-s}\,+\,8\,C_L\,\int_0^{T}\expect \left[\sup_{0\, \leq \,t\, \leq \,\tau}|e_d(t)|^2\right]\,d\tau.
        \end{aligned}
        \label{eq:2nd term main eq}
    \end{equation}
    Using \eqref{eq:1st term main eq} and \eqref{eq:2nd term main eq} in \eqref{eq:main eq global}, we obtain:
    \begin{equation*}
        \expect \left[ \sup_{0\, \leq \,t\, \leq \,T} |e_d(t)|^2\right]\, \leq \,
        4\,(T\,+\,4)\,\left(D(s)\,T \,d^{-s}\,+\,C_L\,\int_0^{T}\expect
            E\left[\sup_{0\, \leq \,t\, \leq \,\tau}|e_d(t)|^2\right]\,\mathrm d\tau\right).
    \end{equation*}
    By Gronwall's inequality, this implies:
    \begin{equation}
        \expect \left[ \sup_{0\, \leq \,t\, \leq \,T} |e_d(t)|^2\right]\, \leq \,4\,(T\,+\,4)\,D(s)\,T\,d^{-s}\,\exp \left(4\,(T\,+\,4)\,C_L\, T\right),
    \end{equation}
    which finishes the proof.
\end{proof}

\begin{remark}
Note that, as mentioned in~\cref{sec:main_results},
the convergence of the solution can still be proved if we only assume that the Lipschitz continuity and convergence of the coefficients hold locally,
provided there exists $p>2$ and a constant $K$ independent of $d$ such that the solutions of the equations
        $$
            \mathrm dX=\vect F(X)\,\mathrm dt\,+\,\vect A(X)\,\mathrm dW_t, \quad \quad X(0) \,=\,X_0,
        $$
        and
        $$
            \mathrm dX_d=\vect F_d(X_d)\,\mathrm dt\,+\,\vect A_d(X_d)\,\mathrm dW_t,  \quad \quad X_d(0) \,=\,X_0,
        $$
        satisfy the moment bounds
        $$
            \expect \left[\sup_{0 {\leq} t  {\leq} T} |X(t)|^p\right]\,{\vee}\,\expect
            \left[\sup_{0 {\leq} t  {\leq} T} |X_d(t)|^p\right] \, {\leq} \,K.
        $$
With these alternative assumptions, we can show that:
\begin{align*}
    \expect \left[\sup_{0\, \leq \,t\, \leq \,T} |X(t) \,-\,X_d(t)|^2 \right] \, &\leq  \,4\,(T\,+\,4)\,D_R(s)\,T\,d^{-s}\,\exp \left(4\,(T\,+\,4)\,C_R\, T\right) \\
    &+ 2\,K\, \left(\frac{2^p\,\delta}{p}\,+\,\frac{p-2}{R^p\,p\,\delta^{\frac{2}{p-2}}}\right).
\end{align*}
for any ${\delta}\,>\,0$ and $R\,>\,X_0$, and where $C_R$ and $D_R$ are the local constants for the assumptions.
The proof of this estimate is very similar to the one of the strong convergence of Euler-Maruyama scheme in \cite[Theorem 2.2]{higham2002strong}, and will thus not be repeated here.
From this estimate, we deduce that the solution of the approximate homogenized equation converges to the exact solution when $d\,\to\,{\infty}$.
\end{remark}

\section{Implementation of the Algorithm and Numerical Experiments}
\label{sec:numerics}
In this section,
we discuss the implementation of the algorithm and present some numerical experiments to
validate the method and illustrate our theoretical findings.
\subsection{Implementation details}
We discuss below the quadrature rules used and the approach taken for the
calculation of the matrix and right-hand side of the linear system of
equations~\eqref{eq:finite_dimensional_problem_in_matrix_form}.

The algorithm requires the calculation of a number of Gaussian integrals of the type:
\begin{equation}
    I = \int_{\real^n} f(y) \, \gaussian[\mu][\Sigma](y) \, dy.
\end{equation}
Several approaches,
either Monte Carlo-based or deterministic,
can be used for the calculation of such Gaussian integrals.
Probabilistic methods
offer an advantage when the dimension $n$ of the state space of the fast process is large,
but since the HMM is more efficient than our approach in that case,
in practice we don't use them.
Instead, we use a multi-dimensional quadrature rule obtained by tensorization of one-dimensional Gauss-Hermite quadrature rules.

For the calculation of the stiffness matrix,
we can take advantage of the diagonality of $A$ when the potential is equal to $V_{\mu,\Sigma} := \frac{1}{2} (y - \mu) \Sigma^{-1} (y-\mu) + \log(\sqrt{(2\pi)^n \det{\Sigma}})$.\footnote{
The constant $\log(\sqrt{(2\pi)^n \det{\Sigma}})$ in $V(\mu, \Sigma)$ is chosen so that $\int_{\real^n} e^{-V} \, dy = 1$.
}
Using the notation $\operator H_{\mu,\Sigma}$ to denote the same operator as in~\cref{lemma:bound_by_gaussian_norm},
and the shorthand notations $\hermite H_\alpha$ and $\hermitef h_\alpha$,
for $\alpha \in \nat^n$,
in place of $\hermite H_\alpha(y;\mu,\Sigma)$ and $\hermitef h_\alpha(y;\mu,\Sigma)$, respectively,
we have:
\begin{equation}
    A_{\alpha \beta} = -\int_{\real^n} \left( \operator H - \operator H_{\mu,\Sigma} \right) \hermitef h_\alpha \, \hermitef h_\beta \, dy
    - \int_{\real^n} \operator H_{\mu,\Sigma} \,\hermitef h_\alpha \, \hermitef h_\beta \, dy =: A^\delta_{\alpha \beta} + D_{\alpha \beta},
\end{equation}
where $D$ is a diagonal matrix whose entries can be computed explicitly and
\begin{equation}
    A^\delta_{\alpha \beta} = \int_{\real^n} \left( W - W_{\mu,\Sigma} \right) \hermite f_\alpha \hermite f_\beta \, dy = \int_{\real^n} (W - W_{\mu,\Sigma}) \, \gaussian[\mu][\Sigma] \, \hermite H_\alpha \hermite H_\beta \, dy,
\end{equation}
where $W_{\mu,\Sigma}$ is the potential obtained from $V_{\mu,\Sigma}$ according to \cref{eq:assumption_w}.
To simplify the calculation of these coefficients, we can expand the Hermite polynomials in terms of monomials:
\begin{equation}
    \hermite H_\alpha(y; \mu,\Sigma) = \sum_{|\beta| \leq d} c_{\alpha\beta} \, y^\beta.
\end{equation}
With this notation, we can write:
\begin{equation}
    A_{\alpha \beta}^\delta = \sum_{|\rho| \leq d} \, \sum_{|\sigma| \leq d} \, c_{\alpha \rho} \, c_{\beta \sigma} \int_{\real^n} (W - W_{\mu,\Sigma}) \, \gaussian[\mu][\Sigma] \,y^{\rho + \sigma} \, dy
                                =: \sum_{|\rho| \leq d} \, \sum_{|\sigma| \leq d} \, c_{\alpha \rho} \, c_{\beta \sigma} I_{\rho + \sigma},
\end{equation}
The integrals $I_{\alpha}$ are computed using a numerical quadrature.
Denoting by $w_i$ and $q_i$ the weights and nodes of the Gauss-Hermite quadrature, respectively,
$I_\alpha$ is approximated as
\begin{equation}
    I_{\alpha} \approx \sum_{i=1}^{N_q} w_i \left( W(q_i) - W_{\mu,\Sigma}(q_i) \right) \, \gaussian[\mu][\Sigma](q_i) \, q_i^{\alpha}, \quad \quad |\alpha| \leq 2d,
\end{equation}
where $N_q$ denotes the number of points in the quadrature.
Only the last factor of the previous expression depends on the multi-index $\alpha$,
so the numerical calculation of these integrals can be performed by evaluating for each grid point
    the value of $w_i \left( W(q_i) - W_{\mu,\Sigma}(q_i) \right) \, \gaussian[\mu][\Sigma](q_i)$ and
    the values of $q_i^\alpha$ for $|\alpha| \leq 2d$.

A similar method can be applied for the calculation of the right-hand side,
whose elements are expressed as:
\begin{equation}
    b_\alpha = \int_{\real^n} e^{-V/2} f \, e_\alpha \, dy.
\end{equation}
By expanding the Hermite functions in terms of Hermite polynomials multiplying $\gaussian[\mu][\Sigma]^{1/2}$,
the previous equation can be rewritten as
\begin{equation}
    b_\alpha = \sum_{|\beta| \leq d} c_{\alpha \beta} \int_{\real^n} \left(\frac{e^{-V}}{\gaussian[\mu][\Sigma]}\right)^{\frac{1}{2}} \, f(x,y) \, y^{\beta} \, \gaussian[\mu][\Sigma] \, dy,
\end{equation}
which is a Gaussian integral that can also be calculated using a multi-dimensional Gauss-Hermite quadrature.

\subsection{Numerical experiments}
Now we present the results of some numerical experiments.

The Euler-Maruyama scheme is used to approximate both $X(t)$ and $X_d(t)$ with a time step of $0.01$ for $T = 1$,
and $N_r = 50$ replicas of the driving Brownian motion are used for the numerical computation of expectations.
The $i$th replica of the discretized approximations of $X(t)$ and $X_d(t)$ are noted $X^{n,i}$ and $X_d^{n,i}$ respectively.
In most of the numerical experiments below, the error is measured by:
\begin{equation}
    \label{eq:error_measure_for_spectral_method}
    E(d) = \left(\frac{1}{N_r} \, \sum_{i=1}^{N_r}\max_{0 \leq n \, \Delta t \leq 1}\, |X^{n,i} \,-\,X_{d}^{n,i}|^2 \right)^{\frac{1}{2}},
\end{equation}
which is an approximation of the norm $||| \cdot |||$ used in \cref{theorem:approximation_convergence_sde}.

In the numerical experiments presented in this paper,
we have chosen the scaling parameter $\lambda$ in \cref{eq:choice_of_mu_and_sigma} by trial-and-error.
A natural extension of the work presented in this paper is to develop a systematic methodology for identifying the optimal scaling parameter,
see also the discussion in \cref{remark:scaling_parameter}.

\subsubsection{Test of the method for single well potentials}
\label{ssub:test_of_the_method_for_simple_problems}
For the two problems in this section,
the scaling parameter is chosen as \emph{$\lambda = 0.5$} for all degrees of approximation.
We start by considering the following problem.
\begin{equation}
    \left\{\begin{aligned}
            dx_{0t} &= -\frac{1}{\varepsilon}\,  \operator{L}\left[\cos{\left (x_{0t} + y_{0t} + y_{1t} \right )}\right]\,dt, \\
            dx_{1t} &= -\frac{1}{\varepsilon} \, \operator{L}\left[\sin{\left (x_{1t} \right )} \sin{\left (y_{0t} + y_{1t} \right )}\right]\,dt, \\
            dy_{0t} &= - \frac{1}{\varepsilon^2} \, \partial_{y_0} V(y)\, dt + \frac{1}{\varepsilon} \left[\cos{\left (x_{0t} \right )} \cos{\left (y_{0t} \right )} \cos{\left (y_{1t} \right )}\right] \, dt+ \frac{4}{\varepsilon} dW_{0t},\\
            dy_{1t} &= - \frac{1}{\varepsilon^2} \, \partial_{y_1} V(y) \, dt+ \frac{1}{\varepsilon} \left[\cos{\left (x_{0t} \right )} \cos{\left (y_{0t} + y_{1t} \right )}\right] \, dt+ \frac{4}{\varepsilon} dW_{1t},\\
        \end{aligned}\right.
    \label{eq:simple_2D_example}
\end{equation}
with
\begin{equation}
    \label{eq:potential_for_simple_2D_example}
    V(y) = y_{0}^{2} + y_{1}^{2} + 0.5 \left(y_{0}^{2} + y_{1}^{2}\right)^{2},
\end{equation}
and where $\operator{L} = -\grad V \cdot \nabla + \Delta$.
We have written the right-hand side of the equations for the slow processes $x_{0t}$ and $x_{1t}$ in this form
to ensure that the centering condition is satisfied.
The convergence of the approximate solution of the effective equation for this problem is illustrated in \cref{fig:simple_2D_example}.
Here the potential is very centered, so Hermite functions are well suited for the approximation of the solution, which is reflected in the very good convergence observed.
\begin{figure}
    \centering {%
        {\input{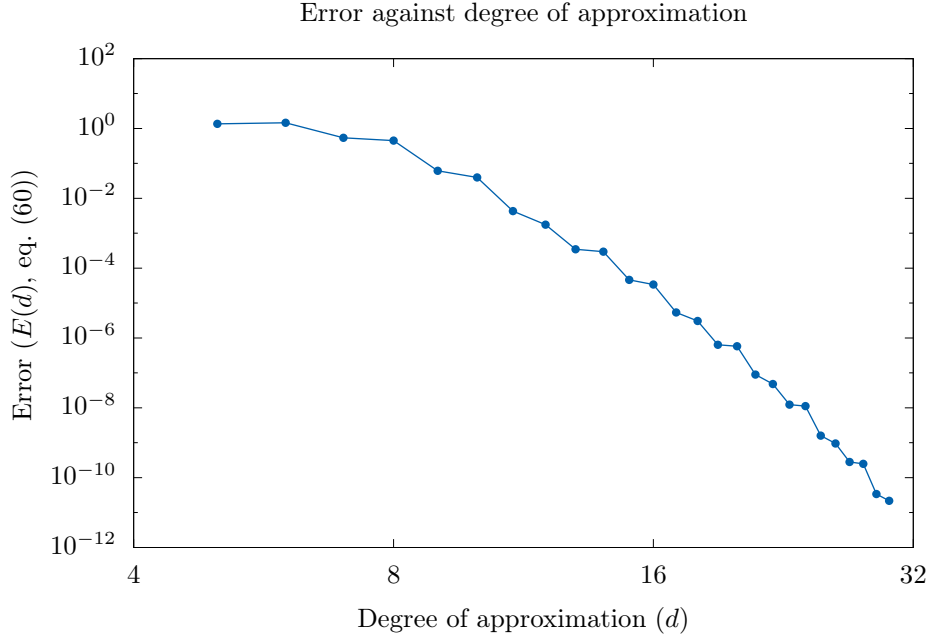}}
    }
    \caption{Error $E(d)$, see \cref{eq:error_measure_for_spectral_method}, for the fast-slow SDE \eqref{eq:simple_2D_example}.
    A super-algebraic convergence is observed.}
    \label{fig:simple_2D_example}
\end{figure}

In the next example, the state space of the fast process has dimension 3:
\begin{equation}
    \left\{\begin{aligned}
            dx_{0t} &=  -\frac{1}{\varepsilon}\,\operator{L}\left[\cos{\left (x_{0t} + y_{0t} + y_{1t} \right )}\right]\,dt,\\
            dx_{1t} &=  -\frac{1}{\varepsilon}\,\operator{L}\left[\sin{\left (x_{1t} \right )} \sin{\left (y_{0t} + y_{1t} + 2 y_{2t} \right )}\right]\,dt,\\
            dy_{0t} &=  -\frac{1}{\varepsilon^2}\,\partial_{y_{0}} V(y)\,dt + \frac{1}{\varepsilon}\left[\cos{\left (x_{0t} \right )} \cos{\left (y_{1t} \right )} \cos{\left (y_{0t} + y_{2t} \right )}\right]\, dt+\frac{\sqrt{2}}{\varepsilon}\,dW_{0t},\\
            dy_{1t} &=  -\frac{1}{\varepsilon^2}\,\partial_{y_{1}} V(y)\,dt + \frac{1}{\varepsilon}\left[\cos{\left (x_{0t} \right )} \cos{\left (y_{0t} + y_{1t} \right )}\right]\, dt+\frac{ \sqrt{2} }{\varepsilon}\,dW_{1t},\\
            dy_{2t} &=  -\frac{1}{\varepsilon^2}\,\partial_{y_{2}} V(y)\,dt +\frac{ \sqrt{2} }{\varepsilon}\,dW_{2t},\\
    \end{aligned}\right.
    \label{eq:simple_3D_example}
\end{equation}
with
\begin{equation}
    \label{eq:potential_for_simple_3D_example}
    V(y) = y_{0}^{4} + 2 y_{1}^{4} + 3 y_{2}^{4}.
\end{equation}
Because computing the effective coefficients is much more expensive computationally than in the previous case,
we measure the error for a given value of the slow variables, by
\begin{equation}
    \label{eq:error_measure_pointwise_for_spectral_method}
    e(d,x) = \frac{\abs{\vect F(x) - \vect F_d(x)}}{\abs{\vect F(x)}} + \frac{\abs{\vect A(x) - \vect A_d(x)}}{\abs{\vect A(x)}}.
\end{equation}
The value we chose for the comparison is $x = (0.2, 0.2)$, for which the denominators in the previous equation are non-zero.
The relative error on the homogenized coefficients is illustrated in \cref{fig:simple_3D_example}.
In this case, the method also performs very well,
although it is slightly less accurate than in the previous example.

\begin{figure}
    \centering {%
        {\input{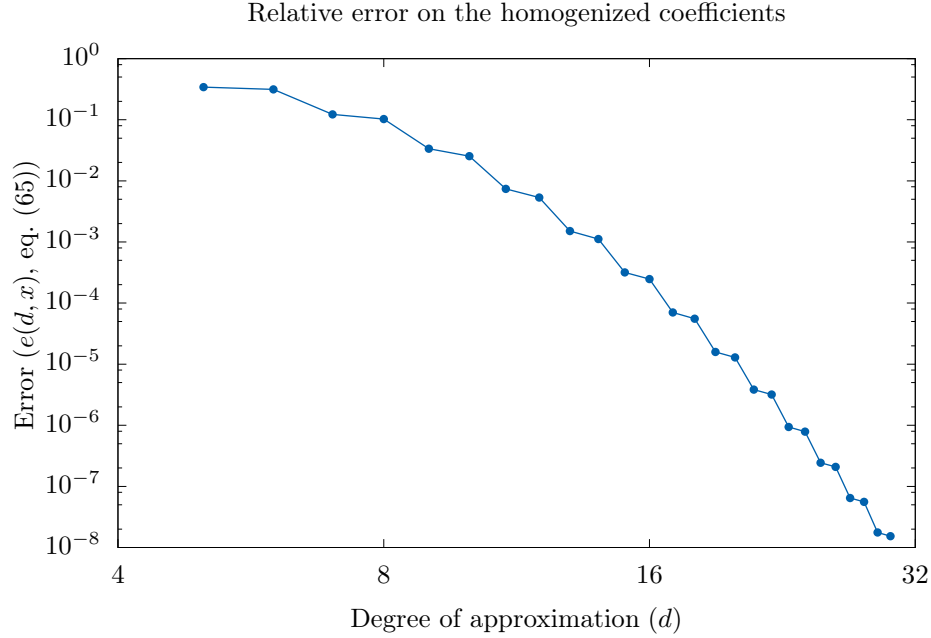}}
    }
    \caption{Relative error of the homogenized coefficients, $e(d,x)$, see \cref{eq:error_measure_pointwise_for_spectral_method},
        for the fast/slow SDE \eqref{eq:simple_3D_example} at $x = (0.2, 0.2)$.
    In this case, the convergence is also super-algebraic.}
    \label{fig:simple_3D_example}
\end{figure}

\subsubsection{Test of the method for potentials with multiple wells}
Now we consider multiple-well potentials that lead to multi-modal distributions.
The first potential that we analyze is the standard bistable potential,
\begin{equation}
    \label{eq:potential_for_1D_bistable_system}
    V(y)  = y^4/4 - y^2/2.
\end{equation}
We consider the fast/slow SDE system:
\begin{equation}
    \left\{
        \begin{aligned}
            dx_t &= -\frac{1}{\varepsilon}\operator{L}\left(x_t \, \sin(y_t)\right)\,dt, \\
            dy_t &= -\frac{1}{\varepsilon^2} \partial_y{V}(y_t)\,dt + \frac{\sqrt{2}}{\varepsilon}\, dW_t.
        \end{aligned}
    \right.
\label{eq:1D_bistable_system}
\end{equation}
We choose the parameter $\lambda$ in \cref{eq:choice_of_mu_and_sigma} to be $\lambda = 0.5$.
The convergence of the method is illustrated in \cref{fig:1D_bistable_system}.
Although the method is less accurate than in the previous cases, a super-algebraic convergence can still be observed,
and a very good accuracy can be reached by choosing a high enough value for the degree of approximation.
Note that the computational cost in this case is very low---the numerical solution can be calculated in a matter of seconds on a personal computer.
\begin{figure}[!ht]
    \centering {%
        {\input{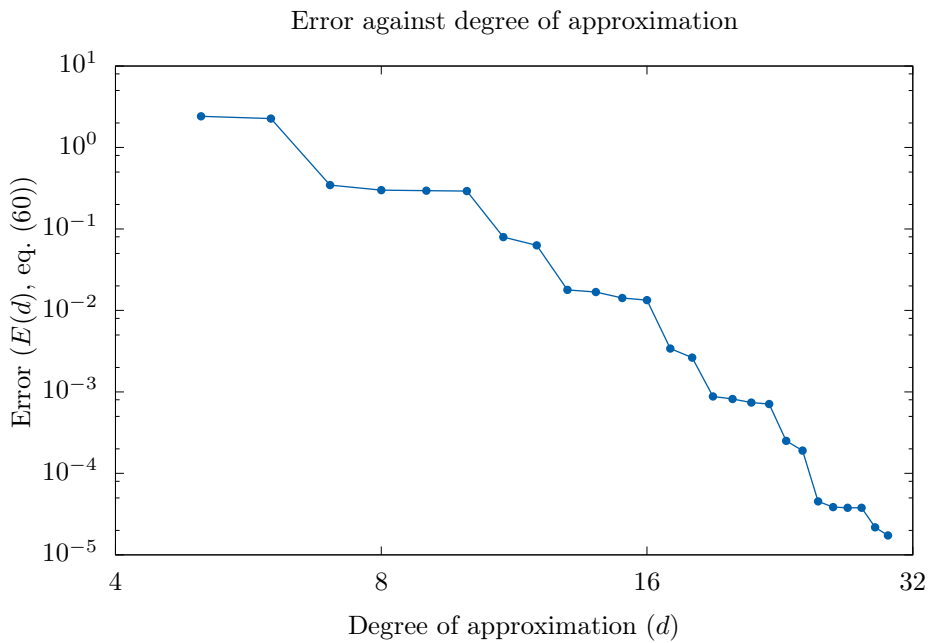}}
    }%
    \caption{Error $E(d)$, see \cref{eq:error_measure_for_spectral_method}, for the fast/slow SDE~\eqref{eq:1D_bistable_system}.}
    \label{fig:1D_bistable_system}
\end{figure}

Next we consider the tilted bistable potential
\begin{equation}
    \label{eq:potential_for_1D_swarming}
    V(y) = y^4/4 - y^2/2 + 10y,
\end{equation}
which corresponds to the case $\gamma = 1$, $\delta = 10$ in the examples considered in \cite{goudonefficient}, and the fast/slow SDE
\begin{equation}
    \left\{
        \begin{aligned}
            dx_t &= -\frac{1}{\varepsilon}\operator{L}\left(x_t \, \sin(y_t) + y_t^2 \right)\,dt, \\
            dy_t &= -\frac{1}{\varepsilon^2} \partial_y{V}(x_t,y_t)\,dt + \frac{\sqrt{2}}{\varepsilon}\, dW_t.
        \end{aligned}
    \right.
\label{eq:1D_swarming}
\end{equation}
The convergence of the solution in this case is presented in \cref{fig:1D_swarming},
for the scaling parameter $\lambda = 1$.
Due to the presence of a strong linear term,
the potential is actually very localized, see \cref{fig:tilted_potential},
which results in good convergence of the spectral method.
\begin{figure}[!ht]
    \centering
    \centering {%
        {\input{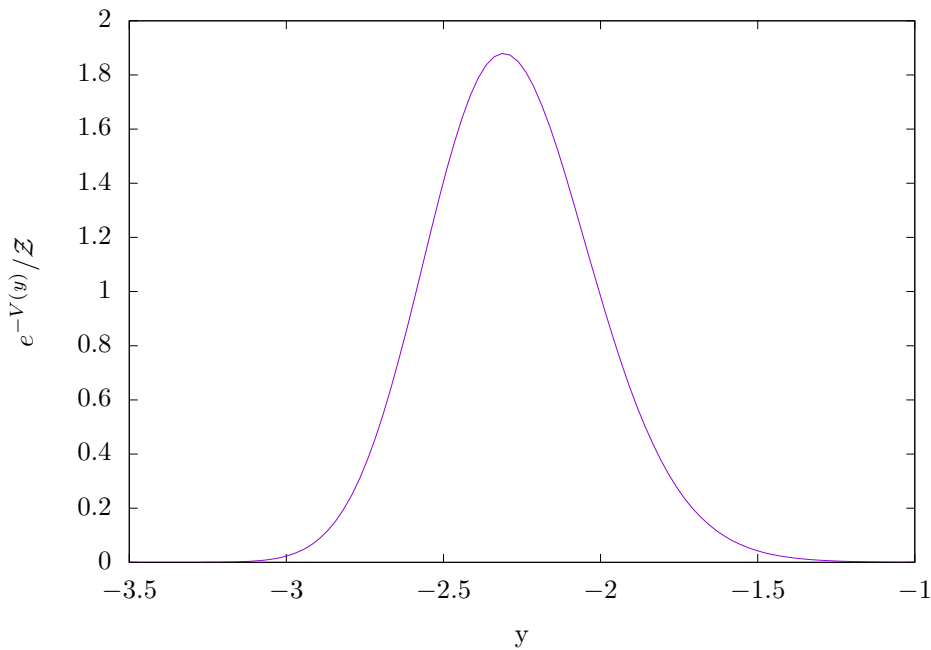}}
    }%
    \caption{Probability density $e^{-V(\cdot)}/\mathcal Z$ associated to the potential~\eqref{eq:potential_for_1D_swarming}.}
    \label{fig:tilted_potential}
\end{figure}
\begin{figure}[!ht]
    \centering {%
        {\input{picture_1D_swarming_time_error.tex}}
    }%
    \caption{Error $E(d)$, see \cref{eq:error_measure_for_spectral_method}, for the fast/slow system~\eqref{eq:1D_swarming}.}
    \label{fig:1D_swarming}
\end{figure}

Finally, we consider a three-well potential in $\real^2$,
\begin{equation}
    \label{eq:three_well_potential}
    {V(y) = \left(\left(y_{0} - 1\right)^{2} + y_{1}^{2}\right) \left(\left(y_{0} + \frac{1}{2}\right)^{2} + \left(y_{1} - \frac{\sqrt{3}}{2}\right)^{2}\right) \left(\left(y_{0} + \frac{1}{2}\right)^{2} + \left(y_{1} + \frac{\sqrt{3}}{2}\right)^{2}\right),}
\end{equation}
and the following fast/slow SDE:
\begin{equation}
    \left\{\begin{aligned}
            dx_{0t} &=  -\frac{1}{\varepsilon}\,\operator{L}\left[\cos{\left (x_{0t} + y_{0t} + y_{1t} \right )}\right]\,dt,\\
            dx_{1t} &=  -\frac{1}{\varepsilon}\,\operator{L}\left[\sin{\left (x_{1t} \right )} \sin{\left (y_{0t} + y_{1t} \right )}\right]\,dt,\\
            dy_{0t} &=  -\frac{1}{\varepsilon^2}\,\partial_{ y_{0} } V(y)\,dt + \frac{1}{\varepsilon}\left[\cos{\left (x_{0t} \right )} \cos{\left (y_{0t} \right )} \cos{\left (y_{1t} \right )}\right]\,dt+\frac{ \sqrt{2} }{\varepsilon}\,dW_{0t},\\
            dy_{1t} &=  -\frac{1}{\varepsilon^2}\,\partial_{ y_{1} } V(y)\,dt + \frac{1}{\varepsilon}\left[\cos{\left (x_{0t} \right )} \cos{\left (y_{0t} + y_{1t} \right )}\right]\,dt+\frac{ \sqrt{2} }{\varepsilon}\,dW_{1t}.\\
    \end{aligned}\right.
    \label{eq:potential_with_three_wells_system}
\end{equation}
For this fast/slow SDE, we choose $\lambda = 0.35$.
A contour plot of the potential is shown in \cref{fig:contour_potential_with_three_wells}, and the convergence graph is presented in \cref{fig:potential_with_three_wells_system}.
In this case the error is very large for degrees of approximation lower than 10,
beyond which the convergence is clear and super-algebraic.
The accuracy reached with a degree of approximation equal to 30 is of the order of $1 \times 10^{-4}$,
which is good in comparison with the accuracy that can be achieved using Monte Carlo-based methods.
\begin{figure}[!ht]
    \centering {%
        {\input{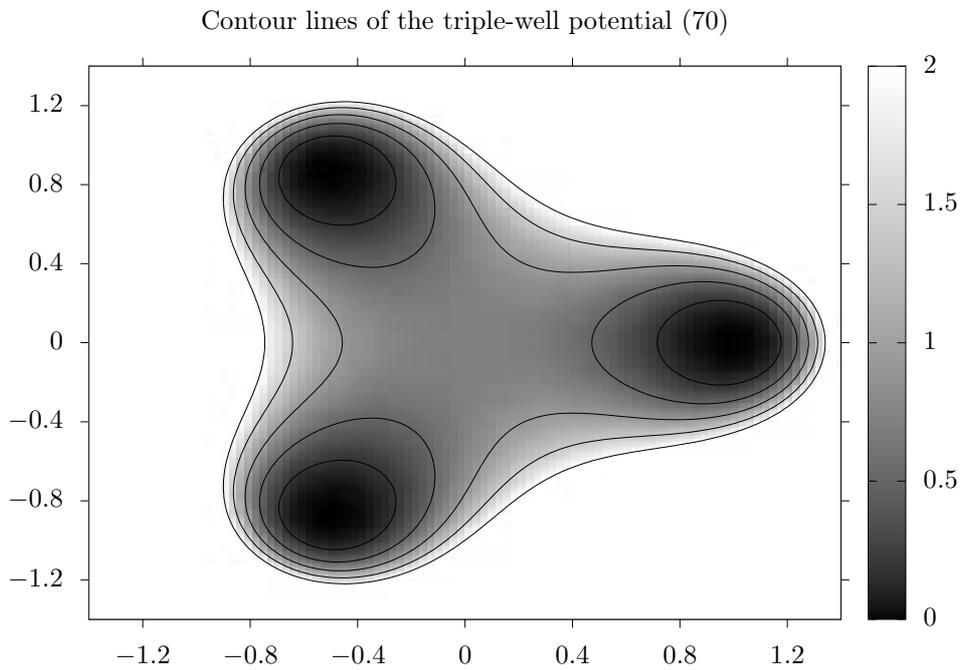}}
    }
    \caption{Potential~\eqref{eq:three_well_potential}, used in equation~\eqref{eq:potential_with_three_wells_system}.}
    \label{fig:contour_potential_with_three_wells}
\end{figure}
\begin{figure}[!ht]
    \centering {%
        {\input{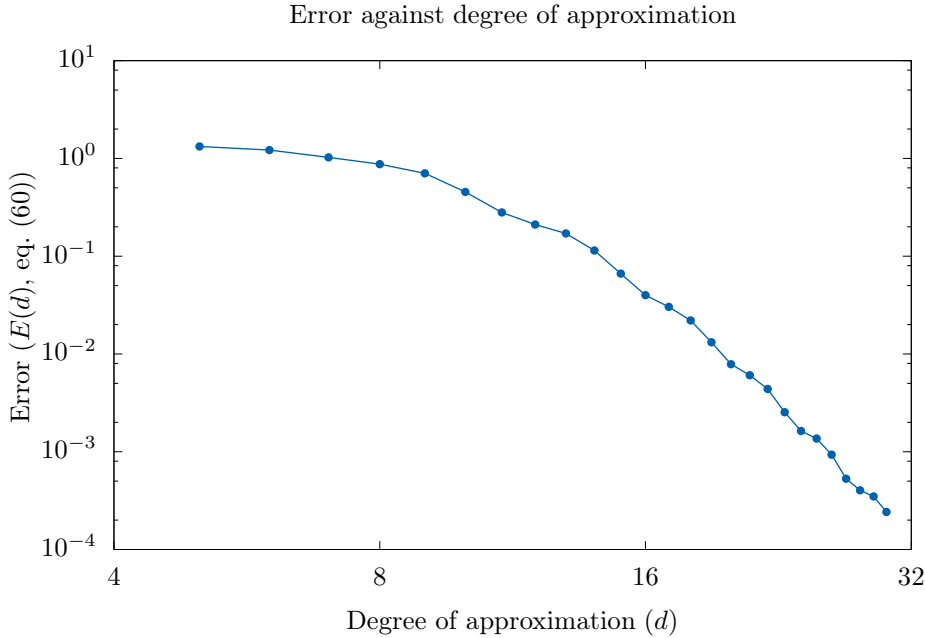}}
    }
    \caption{Error $E(d)$, see \cref{eq:error_measure_for_spectral_method},
    for the fast/slow system~\eqref{eq:potential_with_three_wells_system}.}
    \label{fig:potential_with_three_wells_system}
\end{figure}

\subsubsection{Discretization of a multiscale stochastic PDE}
\label{ssub:discretization_of_a_multiscale_spde}
As mentioned in the introduction, our numerical method is particularly well-suited for
the solution of singularly perturbed stochastic PDEs (SPDEs),
and constitutes a very good complement to the method proposed in \cite{abdulle2012numerical}.
Let us recall how the method introduced in~\cite{abdulle2012numerical} works for a singularly perturbed SPDE of the following form
\begin{equation}
    \frac{{\partial}u}{{\partial}t}\,=\, \frac 1{{\varepsilon}^2} \mathcal A u + \frac 1{\varepsilon} F(u) +\frac1{\varepsilon} Q \dot W,
    \label{eq:spde}
\end{equation}
posed in a bounded domain of $\real^m$ with suitable boundary conditions.
In~\cref{eq:spde}, $\mathcal A$ is a differential operator, assumed to be nonpositive and selfadjoint in a Hilbert space $\mathcal H$, and with compact resolvent.
It is furthermore assumed that $\mathcal A$ has a finite dimensional kernel, denoted by $\mathcal M$.
The term $W$ denotes a cylindrical Wiener process on $\mathcal H$ and
$Q$ denotes the covariance operator of the noise.
It is assumed that $Q$ and $\mathcal A$ commute, and that the noise acts only on the orthogonal complement of $\mathcal M$, denoted by $\mathcal M^{\perp}$.
The function $F({\cdot})$ is a polynomial function representing a nonlinearity that has to be such that the above scaling makes sense.\footnote{
    i.e., the centering condition is satisfied.
}

Since $\mathcal A$ is selfadjoint with compact resolvent, there exists an orthonormal basis of $\mathcal H$ consisting of eigenfunctions of $\mathcal A$.
We denote by $\{\lambda_k, e_k\}$ the eigenvalues and corresponding eigenfunctions of $\mathcal A$.
We arrange the eigenpairs by increasing absolute value of the eigenvalues,
so the $m$ first eigenfunctions are in the kernel of the differential operator, $\mathcal M = \Span\{e_1, \dots, e_m\}$.
Formally, the cylindrical Brownian motion can be expanded in the basis as $W(t) \,=\, \sum^{\infty}_{i=1} \, e_i \, w_i(t)$, where $\left\{w_i\right\}_{i=1}^{\infty}$ are independent Brownian motions.
The assumption that the covariance operator $Q$ commutes with the differential operator $\mathcal A$ means that this operator satisfies $Q\,e_i \,=\, q_i\, e_i$, while the assumption that the noise only acts on \,$\mathcal M^{\perp}$ implies that $q_i \,=\, 0$ for $i\,=\,1,\,2,\,{\dots}\, , \, m$.

We now summarize how the dynamics of the slow modes in~\eqref{eq:spde} can
be approximated by solving a multiscale system of SDEs using the methodology developed in~\cite{abdulle2012numerical}.

First, we write the solution of~\eqref{eq:spde} as
\begin{equation*}
    u = x+y, \,\,\,\text{with}\,\,\,x = \sum_{k=1}^{m} \,x_{k}\,e_{k}
    \,\,\,\text{and}\,\,\, y = \sum_{k=m+1}^{\infty} \,y_{k}\,e_{k}.
\end{equation*}
Note that $x = \mathcal Pu$, and $y = (I-\mathcal P)u$, where $\mathcal P$ is
the projection operator from $\mathcal H$ onto $\mathcal M$.
By assumption, the noise term can be expanded in the same way, as $\sum_{k=1}^{\infty}\,q_{k}\,e_{k}\,\dot w_{k}(t)$.
Substitution of these expansions in the SPDE gives:
\begin{equation*}
        \frac d{dt}  \left(\sum_{k=1}^{m}\,x_{k}\,e_{k}\,\,+\sum_{ k=m+1}^{\infty}
            y_{k}\,e_{k} \right) =  -\frac 1{{\varepsilon}^{2}}\sum_{ k=m+1}^{\infty}
        {\lambda}_{k}\,y_{k}\,e_{k} + \frac 1 {\varepsilon}F(u) + \frac 1 {\varepsilon}\,
        \sum_{k=m+1}^{\infty}q_{k}\,e_{k}\,{\dot w}_{k}(t).
\end{equation*}
The equations that govern the evolution of the coefficients $x_k$ and $y_k$
can be  obtained by taking the inner product (of $\mathcal H$) of both sides of the above equation by each
of the eigenfunctions of  the operator, and using orthonormality :
\begin{equation}
    \label{eq:infinite_system_of_sdes}
    \left\{\begin{aligned}
            \dot x_{i} &= \frac 1 {\varepsilon}\langle F(u), e_{i}\rangle & \quad i & = 1,{\dots},
            m;\\ \dot y_{i} &= -\frac 1 {{\varepsilon}^{2}} {\lambda}_{i}\,y_{i} + \frac 1{\varepsilon}\langle
            F(u), e_{i}\rangle +\frac 1{{\varepsilon}}q_{i}\,{\dot w}_{i} & \quad i &= m+1,m+2, {\dots}
        \end{aligned} \right.
\end{equation}
\Cref{eq:infinite_system_of_sdes} can be written in the form
\begin{equation}
    \left\{\begin{aligned}
            \dot x &= \frac 1 {\varepsilon}a(x,y),\\ \dot y &= \frac 1 {{\varepsilon}^{2}} \mathcal A\,y
            + \frac 1 {\varepsilon}b(x,y) +\frac 1{{\varepsilon}}Q\,{\dot W},
        \end{aligned} \right.
    \label{eq:spde_as_fast_slow_system_of_spde}
\end{equation}
where $a(x,y)$ and $b(x,y)$ are the projections
of $F(u)$ on $\mathcal M$ and $\mathcal M^{\perp}$, respectively:
\begin{equation*}
    a(x,y) = \sum_{i=1}^{m}a^{i}(x,y)\,e_{i}\,\,\,\,\text{with}\,\,\,\,
    a^{i}(x,y) = \langle F(x+y), e_{i}\rangle,
\end{equation*}
and
\begin{equation*}
    b(x,y) = \sum_{i=m+1}^{\infty}b^{i}(x,y)\,e_{i}\,\,\,\,\text{with}\,\,\,\,
    b^{i}(x,y) = \langle F(x+y), e_{i}\rangle.
\end{equation*}
The scale separation now appears clearly.
We now truncate the fast process in \cref{eq:spde_as_fast_slow_system_of_spde} as $y\,{\approx}\,\sum_{i=m+1}^{m+n}\,y_{i}\,e_{i}$ to
derive the following finite dimensional system is obtained:
\begin{equation}
    \left\{\begin{aligned}
            \dot x_{i}\,&=\,\frac 1 {\varepsilon}a^{i}(x,y) & \quad i = 1,{\dots}, m;  \\ \dot
            y_{i}\,&=-\frac 1 {{\varepsilon}^{2}} {\lambda}_{i} y_{i}\,+\,\frac
            1{\varepsilon}\,b^{i}(x,y)\,+\,\frac 1{{\varepsilon}}q_{i}\,{\dot w}_{i} & \quad i = m+1, {\dots}m+n,
        \end{aligned} \right.
    \label{eq:system_sdes_from_spde}
\end{equation}
In~\cite{abdulle2008effectiveness}, the authors investigate
the use of the heterogeneous multiscale method (HMM)
for solving the problem~\eqref{eq:system_sdes_from_spde},
and show that a good approximation can be obtained using this method.
However, when the nonlinearity is a polynomial function of $u$, the function $a$ in the system above,
which also appears on the right-hand side of the Poisson equation, is polynomial in $x$ and $y$.
In addition, the generator of this system of stochastic differential equations is of Ornstein-Uhlenbeck type to leading order, and so its eigenfunctions are Hermite polynomials.
This means that the right-hand side can be expanded exactly in Hermite polynomials, and so the exact effective coefficients can be computed.
Note that although equivalent, applying the unitary transformation is not necessary in this case,
as we can work directly with Hermite polynomials in the appropriate weighted $L^2$ space.

We consider the SPDE~\eqref{eq:spde},
with $\mathcal A = \frac{\partial^2}{\partial x^2} + 1$ and $F(i) = u^2 \frac{\partial u^2}{\partial x}$,
posed on $[-{\pi},\,{\pi}]$ with periodic boundary conditions:
\begin{equation}
    \label{eq:example_spde}
    \frac{\partial u}{\partial t} \,=\,\frac{1}{\varepsilon^2}\, \left( \frac{\partial^2}{ \partial x^2} \,+\,1\right)\,u \,+\, \frac{1}{\varepsilon} \, u^2\, \frac{\partial u^2}{\partial x} \,+\, \frac{1}{\varepsilon} \, Q \dot W.
\end{equation}
The eigenfunctions of $\operator{A}$ on $[-\pi, \pi]$ with periodic B.C. are
\begin{equation*}
    e_i \,=\, \left\{
        \begin{aligned}
            & \frac{1}{\sqrt{\pi}}\sin\left(\frac{i+1}{2}\,x\right) &\quad \text{ if $i$ is odd,} \\
            & \frac{1}{\sqrt {\pi}}\cos\left(\frac{i}{2}\,x\right) &\quad \text{ if $i$ is even,}
        \end{aligned} \right .
\end{equation*}
and the corresponding eigenvalues are
$\lambda_i = 1 - \frac{(i+1)^2}{4}$ if $i$ is odd and $\lambda_i = 1 - \frac{i^2}{4}$ if $i$ is even.
In this case the null space of $\operator{A}$ is two-dimensional.
We consider a noise process of the form:
\begin{equation}
    \label{eq:noise_process_for_equation}
    Q \dot W = \sum_{i=3}^{\infty} q_i \, \dot w_i.
\end{equation}
Following the methodology outlined above, we approximate the solution by a truncated Fourier series:
\begin{equation}
    \label{eq:fourier_expansion_example_spde}
    u \,=\, x_{1}\,e_1 \,+\, x_2\,e_2 \,+\, \sum_{ i\,=\,3}^{n\,+\,2} y_i \, e_i.
\end{equation}
Substituting in the nonlinearity and taking the inner product with each of the eigenfunctions, a system of equation of the type \eqref{eq:system_sdes_from_spde} is obtained.
The operator $\operator{A}$ and the nonlinearity were chosen so that
the centering condition is satisfied.
The homogenized equation for the slow variables $(x_1, x_2)$ reads
\begin{equation}
    \label{eq:spde_effective_equation}
    d X_t \,=\, \vect F(X_t) \,dt + \vect A(X_t) \, dW_t,
\end{equation}
where $\vect F(\cdot)$  and $\vect A(\cdot)$ are given by equations~\eqref{eq:effective_drift} and~\eqref{eq:effective_diffusion}, respectively,
and $W$ is a standard Wiener process in $\real^2$.
The Euler-Maruyama solver was used for both the macro and micro solvers, and the parameters of the HMM were chosen as
\begin{equation}
    \label{eq:parameters_hmm}
    ({\delta}t/{\varepsilon}^2, N_T, M,N, N') = ( 2^{-p}, 16, 1 , 10{\times}2^{3p}, 2^pp).
\end{equation}
Here $\delta t$ is the time step of the micro-solver,
$N_T$ is the number of steps that are omitted in the time-averaging process to reduce transient effects,
$M$ is the number of samples used for ensemble averages,
and $N$, $N'$ are the number of time steps employed for
the calculation of time averages and
the discretization of integrals originating from Feynman-Kac representation formula~\eqref{eq:feynman-kac_formula}, respectively.
See~\cite{weinan2005analysis, vanden2003fast} for a more detailed description of the method and
a detailed explanation of the parameters in~\eqref{eq:parameters_hmm}.
In \cref{fig: spde different ps x1,fig: spde different ps x2},
we compare the solutions obtained using the HMM method with the one obtained using our approach,
using the same macro-solver and the same replica of the driving Brownian motion for both,
and with the initial condition $x_{i0} = 1.2$ for $i = 1, \dots, m$.
The former is denoted by $\hat X^n$ and the latter by $X^n$.
Notice that when the value of the parameter $p$ increases,
the solution obtained using the HMM converges to the exact solution obtained using the Hermite spectral method.

We now investigate the dependence on the precision parameter $p$ of the error between the homogenized coefficients.
The same error measure as in~\cite{weinan2005analysis} is used to compare the two methods:
\begin{equation}
    \label{eq:error_measure_for_comparison_with_hmm}
    E_p \,=\, \frac{{\Delta}t}{T}~ \left(\sum_{n \,{\leq}\,T/{\Delta}t}| \vect F^p_{HMM}(X^n) - \vect F_{Sp}(X^n)| \,+\, |\vect A_{HMM}^p(X^n) \,-\, \vect A_{Sp}(X^n)| \right).
\end{equation}
Here $\vect F^p_{HMM}$ and $\vect A_{HMM}^p$ are the drift and diffusion coefficients obtained using
the HMM with the precision parameter equal to $p$,
while $\vect F_{Sp}$ and $\vect A_{Sp}$ are the coefficients given by the Hermite spectral method developed in this paper.
Given the choice of parameters~\eqref{eq:parameters_hmm},
the theory developed in~\cite{weinan2005analysis} predicts that
the error should decrease as $\mathcal O(2^{- p })$.
This error is presented in \cref{fig: spde error} as a function of the precision parameter $p$,
showing a good agreement with the theory developed in \cite{abdulle2012numerical,weinan2005analysis}.

For the SPDE described above
our method based on the solution of the Poisson equation associated with~\eqref{eq:system_sdes_from_spde} using Hermite polynomials
does recover exactly the corresponding effective parameters,
and the only source of error is the macroscopic discretization scheme.
This is in sharp contrast with the HMM-based method developed in \cite{abdulle2012heterogeneous},
for which the micro-averaging process to recover the effective coefficients represents a non-negligible computational cost.
\begin{figure}[!ht]
    \begin{center}
    {
        \resizebox{1.0\textwidth}{!}{\input{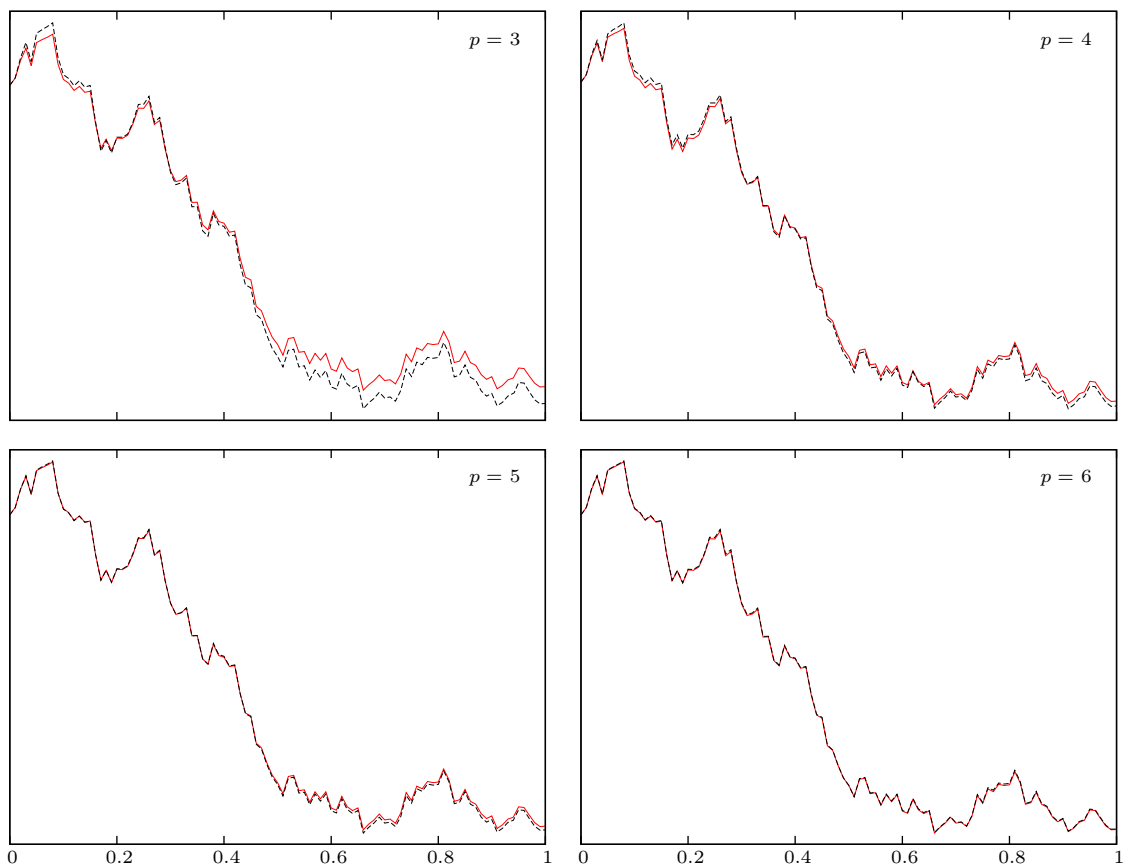}}
    }
    \end{center}
    \caption{Evolution of the coefficient $x_{1}$ of the first term in the Fourier expansion~\eqref{eq:fourier_expansion_example_spde}
        of the the solution to the SPDE~\eqref{eq:example_spde},
        obtained numerically by the HMM (black) and the Hermite spectral method (red),
        for one sample of the driving Brownian motion.
    }
    \label{fig: spde different ps x1}
\end{figure}
\begin{figure}[!ht]
    \begin{center}
    {
        \resizebox{1.0\textwidth}{!}{\input{picture_spde_comparison_x2.tex}}
    }
    \end{center}
    \caption{Evolution of the coefficient $x_2$ of the second term in the Fourier expansion~\eqref{eq:fourier_expansion_example_spde}
        of the the solution to the SPDE~\eqref{eq:example_spde},
        obtained numerically by the HMM (black) and the Hermite spectral method (red),
        for one sample of the driving Brownian motion.
    }
    \label{fig: spde different ps x2}
\end{figure}

\begin{figure}[!ht]
    \begin{center}
    {
        \resizebox{1.0\textwidth}{!}{\input{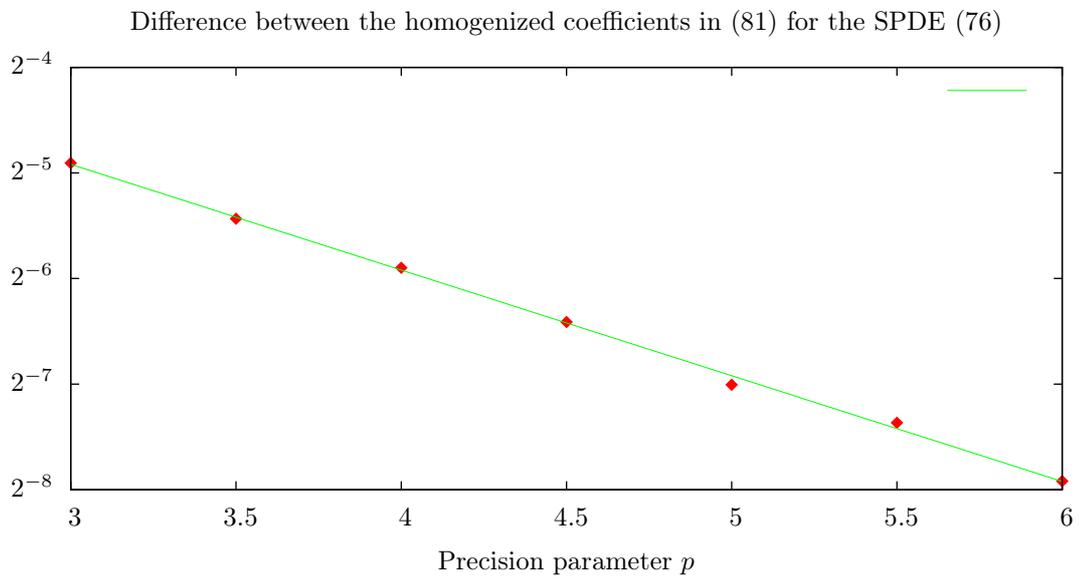}}
    }
    \end{center}
    \caption{Error between the homogenized coefficients (see \cref{eq:error_measure_for_comparison_with_hmm}) for the SPDE~\eqref{eq:example_spde},
        as a function of the precision parameter $p$.
        The green line, obtained by polynomial fitting, has slope $-1.01$ in the $p-\log_2(E_p)$ plane,
        which is close to the theoretical value of -1, showing a perfect agreement with the theory.
    }
    \label{fig: spde error}
\end{figure}

\clearpage
\section{Conclusion and Further Work}
\label{sec:conclusion_and_perspectives_for_future_work}

In this paper, we proposed a new approach for the numerical approximation of
the slow dynamics of fast/slow SDEs for which a homogenized equation exists.
Starting from the appropriate Poisson equation, the same unitary transformation as
in \cite{goudonefficient} was utilized to obtain formulas for
the drift and diffusion coefficients
in terms of the solution to a Schr\"odinger equation.
This equation is solved at each discrete time by means of a
spectral method using Hermite functions, from which approximations of the
homogenized drift and diffusion coefficients were calculated. A stochastic
integrator was then used to evolve the slow variables by one time step, and the
procedure is repeated.

Building on the work of \cite{gagelman2012spectral}, spectral convergence of
the homogenized coefficients was rigorously established, from which weak
convergence of the discrete approximation in time to the exact homogenized
solution was derived. In the final section, the accuracy and efficiency of the
proposed methodology were examined through numerical experiments.

The method presented, although not as general as the HMM, has proven more
precise and more efficient for a broad class of problems. It performs
particularly well for singularly perturbed SPDEs,
and constitutes in this case a good complement to the HMM-based method
presented in~\cite{abdulle2012numerical}. It also works comparatively very well
when the fast dynamics is of relatively low dimension---typically less than or
equal to 3---and especially so when the potential is localized, since fewer
Hermite functions are required to accurately resolve the Poisson equations in
this situation. Our method also has several advantages compared to the approach
taken in~\cite{goudonefficient}: it does not require truncation of the domain,
does not require the calculation of the eigenvalues and eigenfunctions of the Schr\"odinger operator,
and has better asymptotic convergence properties.

The limitations of the method are two-fold;
its generality is limited by the requirement of the gradient structure for fast dynamics,
and its efficiency is limited by the curse of dimensionality,
which causes the computation time to become prohibitive when the dimension of the state space of the fast process increases.

The extent to which some of these constraints can be lifted constitutes an
interesting topic for future work. We believe that it is possible to generalize
our method to a broader class of problems while retaining its efficiency and
accuracy. In addition, high-dimensional integrals could be computed more
efficiently. For example, an alternative to the
tensorized quadrature approach taken in this work is to use a sparse grid
method; such a method can in principle offer the same degree of polynomial exactness with a
significantly lower number of nodes, see e.g.~\cite{MR1669959,kaarnioja2013smolyak}.

\paragraph{Acknowledgments}
The authors thank Andrew Duncan, Gabriel Stoltz and Julien Roussel for useful discussions.
A. Abdulle is supported by the Swiss national foundation.
G.A. Pavliotis acknowledges financial support by the Engineering and Physical Sciences Research Council of the UK through Grants Nos. EP/L020564, EP/L024926 and EP/L025159.
U. Vaes is supported through a Roth PhD studentship by the Department of Mathematics, Imperial College London.

\appendix
\section{Weighted Sobolev Spaces}
\label{sec:weighted_sobolev_spaces}
In this section, we recall a few results about weighted Sobolev spaces
that are needed for the analysis presented in \cref{sec:analysis_of_the_method}.
For more details on this topic, see~\cite{gagelman2012spectral, MR2656512, MR3443169, lorenzi2006analytical}.
Throughout the appendix, $V$ denotes a smooth confining potential,
whose derivatives are all bounded above by a polynomial, and such that $\rho := e^{-V}$ is normalized.
\begin{definition}
    \label{definition:weighted_l2_space}
    The weighted $L^2$ space $\lp{2}{\real^n}[\rho]$ is defined as
    \[
        \lp{2}{\real^n}[\rho] = \left\{u \text{ measurable }: \int_{\real^n} u^2 \,\rho \, dy < \infty\right\}.
    \]
    It is a Hilbert space for the inner product given by:
    \[
        \ip{u}{v}[\rho] = \int_{\real^n} u \, v \, \rho \, dy.
    \]
\end{definition}
\begin{definition}
    \label{definition:weighted_sobolev_spaces_probability_density}
    The weighted Sobolev spaces $\sobolev{s}{\real^n}[\rho]$, with $s \in \nat$, is defined as
    \[
        \sobolev{s}{\real^n}[\rho] = \left\{u \in \lp{2}{\real^n}[\rho]: \partial^\alpha u \in \lp{2}{\real^n}[\rho] \, \forall \abs{\alpha} \leq s\right\}.
    \]
    It is a Hilbert space for the inner product given by:
    \[
        \ip{u}{v}[s][\rho] = \sum_{\abs{\alpha} \leq s} \ip{\partial^\alpha u}{\partial^\alpha v}_\rho
    \]
\end{definition}
We also define the following spaces.
\begin{definition}
    \label{definition:weighted_sobolev_spaces_operator}
    Given $s \in \nat$ and a nonnegative selfadjoint operator $-\operator{L}$ on a Hilbert space $H$ of functions on $\real^n$,
    we define $\sobolev{s}{\real^n}[\operator{L}]$ as the space obtained by completion of $\test{\real^n}$ for the inner product:
    \[
        \ip{u}{v}[s][\operator{L}] = \sum_{i=0}^{s} \ip{(-\operator{L})^iu}{v}_{H}.
    \]
    The associated norm will be denoted by $\norm{\cdot}[s][\operator{L}]$.
\end{definition}
It can be shown that $\test{\real^n}$ is dense in $\sobolev{1}{\real^n}[\rho]$, see~\cite{villani2006hypocoercivity}.
By integration by parts, this implies that $\sobolev{1}{\real^n}[\rho] = \sobolev{1}{\real^n}[\operator{L}]$,
where $-\operator{L}$ is the nonnegative selfadjoint operator on $\lp{2}{\real^n}[\rho]$ defined by $\operator{L} = \Delta - \grad{V}\cdot \grad$.
We now make the additional assumption that the potential $V$ satisfies
\begin{equation}
    \lim_{\abs{y} \to \infty} \left(\frac{1}{4} \abs{\grad V}^2 - \frac{1}{2}\Delta V\right) = \infty \quad \text{and} \quad \lim_{\abs{y} \to \infty}\abs{\grad V} = \infty.
    \label{eq:spectral_gap_assumption}
\end{equation}
With this, the following compactness result holds.
\begin{proposition}
    Assume that~\eqref{eq:spectral_gap_assumption} holds.
    Then the embedding $\sobolev{1}{\real^n}[\rho] \subset \lp{2}{\real^n}[\rho]$ is compact,
    and the measure $\rho$ satisfies Poincaré inequality:
    \[
        \int_{\real^n} {(u - \bar u)}^2\, \rho \, dy \leq C \int_{\real^n} \abs{\grad{u}}^2 \,\rho\, dy \quad \forall u \in \sobolev{1}{\real^n}[\rho],
    \]
    where $\bar u = \int_{\real^n} u\, \rho\, dy$.
\label{proposition:poincare_inequality_weighted_measure}
\end{proposition}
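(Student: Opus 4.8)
The plan is to prove the two assertions in sequence: first the compactness of the embedding $\sobolev{1}{\real^n}[\rho] \subset \lp{2}{\real^n}[\rho]$, and then to read off the Poincaré inequality as the statement that the first nonzero eigenvalue of $-\operator{L}$ is bounded away from zero. The bridge between the two is the elementary estimate
\[
    \int_{\real^n} W\, u^2 \, \rho \, dy \,\leq\, \int_{\real^n} \abs{\grad u}^2 \, \rho \, dy, \qquad u \in \test{\real^n}.
\]
I would obtain this by starting from $\int_{\real^n} \Delta V \, u^2 \rho \, dy$, integrating by parts to get $\int_{\real^n} \abs{\grad V}^2 u^2 \rho \, dy - 2 \int_{\real^n} u \, \grad V \cdot \grad u \, \rho \, dy$, and then absorbing the cross term with Young's inequality $2 u\, \grad V \cdot \grad u \leq \tfrac12 \abs{\grad V}^2 u^2 + 2 \abs{\grad u}^2$. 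Recalling that $W = \tfrac14 \abs{\grad V}^2 - \tfrac12 \Delta V$ collapses the left-hand side to $2 \int W u^2 \rho \, dy$ and yields the displayed bound; by density of $\test{\real^n}$ in $\sobolev{1}{\real^n}[\rho]$ it extends to the whole space.

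For the compactness I would take a sequence $\seq{u}{k}{1}{\infty}$ bounded in $\sobolev{1}{\real^n}[\rho]$ and extract an $\lp{2}{\real^n}[\rho]$-convergent subsequence by a two-scale argument. On each ball $B_R$ the weight $\rho = e^{-V}$ is bounded above and below, so the sequence is bounded in the unweighted $H^1(B_R)$; Rellich--Kondrachov together with a diagonal extraction then gives convergence on every compact set. The tail is controlled by the inequality above: since $W \to \infty$, for large $R$ one has $W \geq m_R$ on $\{\abs{y} > R\}$ with $m_R \to \infty$, and writing $m_R \int_{\abs{y}>R} u_k^2 \rho \, dy \leq \int_{\real^n} W u_k^2 \rho \, dy + M \norm{u_k}[\rho]^2$ (with $-M = \inf W$) shows that $\int_{\abs{y}>R} u_k^2 \rho \, dy$ is small uniformly in $k$. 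Combining uniformly small tails with local convergence produces a Cauchy subsequence in $\lp{2}{\real^n}[\rho]$.

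With compactness in hand the Poincaré inequality follows from spectral theory. The operator $-\operator{L}$ is nonnegative and selfadjoint on $\lp{2}{\real^n}[\rho]$ with quadratic form $\int_{\real^n} \abs{\grad u}^2 \rho \, dy$, and the compact embedding makes its resolvent compact, so its spectrum is a discrete sequence $0 = \lambda_0 \leq \lambda_1 \leq \cdots \to \infty$. The kernel consists of functions with $\int_{\real^n} \abs{\grad u}^2 \rho \, dy = 0$, hence of constants since $\real^n$ is connected; therefore $\lambda_0 = 0$ is simple and $\lambda_1 > 0$. Applying the variational characterization of $\lambda_1$ to $u - \bar u$, which is orthogonal to the constants and satisfies $\grad(u - \bar u) = \grad u$, gives $\norm{u - \bar u}[\rho]^2 \leq \lambda_1^{-1} \int_{\real^n} \abs{\grad u}^2 \rho \, dy$, the claimed inequality with $C = \lambda_1^{-1}$.

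The main obstacle is the tail estimate, namely ruling out that $\sobolev{1}{\real^n}[\rho]$-bounded sequences escape to infinity; this is precisely where the hypothesis $W \to \infty$ is used and where the integration-by-parts identity does the essential work. A secondary subtlety is justifying the passage from $\test{\real^n}$ to $\sobolev{1}{\real^n}[\rho]$ in the key inequality and in the identification $\sobolev{1}{\real^n}[\rho] = \sobolev{1}{\real^n}[\operator{L}]$, which rests on the density result quoted from~\cite{villani2006hypocoercivity}; the growth assumption $\abs{\grad V} \to \infty$ is what guarantees the selfadjointness and density needed to make the spectral-theoretic step rigorous.
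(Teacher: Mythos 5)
Your proof is correct, but it does not parallel the paper's argument for the simple reason that the paper offers none: its entire proof is the citation ``see \cite{lorenzi2006analytical}, sec.~8.5, p.~216.'' What you have reconstructed is, in substance, the standard argument underlying that reference: the integration-by-parts/Young estimate $\int_{\real^n} W u^2 \rho \, dy \leq \int_{\real^n} \abs{\grad u}^2 \rho \, dy$, local Rellich--Kondrachov plus a uniform tail bound driven by $W \to \infty$ to get compactness, and then discreteness of the spectrum of $-\operator{L}$ together with the identification of its kernel as the constants to get the Poincar\'e inequality with $C = \lambda_1^{-1}$. The payoff of writing it out is that it exposes which hypotheses actually do the work: only the first half of~\eqref{eq:spectral_gap_assumption}, namely $W \to \infty$, enters your argument, together with the density of $\test{\real^n}$ in $\sobolev{1}{\real^n}[\rho]$ quoted from~\cite{villani2006hypocoercivity}; contrary to your closing remark, that density result does not rest on $\abs{\grad V} \to \infty$ (the paper states it under smoothness and polynomial bounds on the derivatives of $V$ alone), so the second half of~\eqref{eq:spectral_gap_assumption} is simply not needed for your route. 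Two details you gloss over should be recorded to make the proof airtight: (i) extending the key inequality from $\test{\real^n}$ to all of $\sobolev{1}{\real^n}[\rho]$ is not a purely formal density passage, because $W$ is unbounded above; split $W = W^+ - W^-$ (with $W^-$ bounded, since $W$ is continuous and tends to $+\infty$), pass to the limit in the $W^-$ and gradient terms, and apply Fatou's lemma to the $W^+$ term along an a.e.-convergent subsequence; (ii) in the spectral step, take $-\operator{L}$ to be the self-adjoint operator generated by the closed nonnegative form $u \mapsto \int_{\real^n} \abs{\grad u}^2 \rho \, dy$ with form domain $\sobolev{1}{\real^n}[\rho]$, so that compactness of the embedding of the form domain into $\lp{2}{\real^n}[\rho]$ yields compactness of the resolvent; then no essential self-adjointness issue arises and the variational characterization of $\lambda_1$ you invoke is legitimate.
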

\begin{proof}
    See \cite{lorenzi2006analytical}, sec. 8.5, p. 216.
\end{proof}
\begin{remark}
Alternative conditions on the potential that ensure that the corresponding Gibbs measure satisfies a Poincar\'{e} inequality are presented in~\cite[Theorem 2.5]{LelievreStoltz2016}.
\end{remark}

Now we consider the unitary transformation $e^{-V/2}: \lp{2}{\real^n}[\rho] \to \lp{2}{\real^n}$, and characterize the spaces obtained by applying this mapping to the weighted Sobolev spaces.
\begin{proposition}
    The multiplication operator $e^{-V/2}$ is a unitary transformation from $\sobolev{s}{\real^n}[\operator{L}]$ to $\sobolev{s}{\real^n}[\operator{H}]$,
    where $-\operator{H}$ is the nonnegative selfadjoint operator on $\lp{2}{\real^n}$ defined by
    \[
        -\operator{H} = e^{-V/2} \, \operator{L} \, e^{V/2} = - \Delta + \left(\frac{\abs{\grad{V}}^2}{4} - \frac{\Delta V}{2}\right) =:  -\Delta + W.
    \]
    \begin{proof}
        Since $(-\operator{H})^i = e^{-V/2} \, (-\operator{L})^i \, e^{V/2}$, $\ip{u}{v}_{s,\operator{L}} = \ip{e^{-V/2}u}{e^{-V/2}v}_{s, \operator{H}}$ for any $u,v \in \test{\real^n}$ and any exponent $i \in \nat$,
        from which the result follows by density.
    \end{proof}
\end{proposition}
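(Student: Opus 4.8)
The plan is to prove the statement in three stages: first establish that multiplication by $e^{-V/2}$ is an isometric isomorphism at the level of the underlying $L^2$ spaces, then propagate this to the operator graph norms through a conjugation identity, and finally conclude by a density argument on $\test{\real^n}$.

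First I would record that $e^{-V/2}$ is a unitary map from $\lp{2}{\real^n}[\rho]$ onto $\lp{2}{\real^n}$: for any $u \in \lp{2}{\real^n}[\rho]$ one has $\norm{e^{-V/2}u}[0]^2 = \int_{\real^n} u^2 \, e^{-V}\,dy = \norm{u}[\rho]^2$, and the map is surjective with inverse $e^{V/2}$. Polarizing, this gives the key identity $\ip{e^{-V/2}w_1}{e^{-V/2}w_2}[0] = \ip{w_1}{w_2}[\rho]$ for all $w_1, w_2 \in \lp{2}{\real^n}[\rho]$, which will be used at every power in the graph norm.

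The central algebraic step is the conjugation identity for iterated powers. By the intertwining relation~\eqref{def:H} (a direct chain-rule computation), $-\operator{H} = e^{-V/2}(-\operator{L})e^{V/2}$ as operators on $\test{\real^n}$; here I would first check that both $\operator{L}$ and $\operator{H}$ map $\test{\real^n}$ into itself, which holds because $V$ is smooth so that $W$, $\grad V$, and $\Delta V$ are smooth and the compact support of a test function is preserved. Telescoping the conjugation then yields $(-\operator{H})^i = e^{-V/2}(-\operator{L})^i e^{V/2}$ for every $i \in \nat$. Writing $\tilde u = e^{-V/2}u$ and $\tilde v = e^{-V/2}v$ for $u,v \in \test{\real^n}$, I would compute term by term:
\[
    \ip{(-\operator{H})^i \tilde u}{\tilde v}[0] = \ip{e^{-V/2}(-\operator{L})^i u}{e^{-V/2}v}[0] = \ip{(-\operator{L})^i u}{v}[\rho],
\]
where the first equality uses $e^{V/2}\tilde u = u$ together with the conjugation identity, and the second uses the $L^2$-unitarity. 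Summing over $i = 0,\dots,s$ and invoking~\cref{definition:weighted_sobolev_spaces_operator} gives $\ip{\tilde u}{\tilde v}[s][\operator{H}] = \ip{u}{v}[s][\operator{L}]$ for all test functions.

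Finally, since both $\sobolev{s}{\real^n}[\operator{L}]$ and $\sobolev{s}{\real^n}[\operator{H}]$ are defined as the completions of $\test{\real^n}$ in their respective norms, and multiplication by $e^{-V/2}$ is a bijection of $\test{\real^n}$ onto itself that is isometric for these norms (with isometric inverse $e^{V/2}$), it extends uniquely to a surjective isometry between the two completions, i.e.\ a unitary transformation, which is exactly the claim. I do not expect a genuine obstacle here: the proof is essentially a density argument built on the conjugation identity. The only points deserving care are confirming that $\operator{L}$ and $\operator{H}$ preserve compact support (so the iterated identity is well posed on $\test{\real^n}$) and checking that the density argument transfers both the isometry \emph{and} the surjectivity to the completed spaces.
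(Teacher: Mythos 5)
Your proposal is correct and follows essentially the same route as the paper's own proof: the conjugation identity $(-\operator{H})^i = e^{-V/2}(-\operator{L})^i e^{V/2}$, the resulting equality of the $s$-inner products on $\test{\real^n}$, and a density argument on the completions. You simply make explicit the steps the paper compresses (the $L^2$ unitarity, preservation of $\test{\real^n}$ under $\operator{L}$, $\operator{H}$, and multiplication by $e^{\pm V/2}$, and the transfer of isometry and surjectivity to the completed spaces), all of which are sound.
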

The space $\sobolev{1}{\real^n}[\operator{H}]$, for $\operator{H}$ defined as above, is of particular relevance to this paper.
It is a simple exercise to show that this space can be equivalently defined by
\[
    \sobolev{1}{\real^n}[\operator{H}] = \left\{ u \in \sobolev{1}{\real^n} : \int_{\real^n} |W| u^2 \, dy < \infty\right\},
\]
and that for $u \in \sobolev{1}{\real^n}[\operator{H}]$,
\[
    \norm{u}[1][\operator{H}]^2 = \norm{u}[0]^2 + \int_{\real^n} \abs{\grad{u}}^2\,dy + \int_{\real^n} Wu^2\,dy.
\]

\section{Hermite Polynomials and Hermite Functions}
\label{sec:hermite}
In this appendix, we recall some results about Hermite polynomials and Hermite functions that are essential for the analysis presented in this paper.
\paragraph{Hermite polynomials}
In one dimension, it is well-known that the polynomials
\begin{equation}
    H_r(s) = \frac{{(-1)}^r}{\sqrt{r!}}
    \exp\left({\frac{s^2}{2}}\right)\frac{\mathrm d ^r}{\mathrm d
    s^r}\left(\exp{\left(\frac{-s^2}{2}\right)}\right) \qquad r = 0, 1, 2, \dots
    \label{Hermite_polynomials_in_1_dimension}
\end{equation}
form a complete orthonormal basis of $\lp{2}{\real}[\gaussian[0][1]]$,
where $\gaussian[0][1]$ is the Gaussian density of mean 0 and variance 1.
These polynomials can be naturally extended to the multidimensional case.
For $\mu \in \real^n$ and a symmetric positive definite matrix $\Sigma \in \real^{n\times n}$, consider the Gaussian density $\gaussian[\mu][\Sigma]$ of mean $\mu$ and covariance matrix $\Sigma$.
Let $D$ and $Q$ be diagonal and orthogonal matrices such that $\Sigma = Q D Q^T$, and note $S = Q D^{1/2}$, such that $\Sigma = SS^T$.
With these definitions, the polynomials defined by
\begin{equation}
    \hermite H_{\alpha}(y; \mu,\Sigma)\,=\, \hermite H^*_{\alpha} (S^{-1}(y-\mu)), \quad \text{ with }
    \alpha \in \nat^n \text{ and } \hermite H^*_\alpha(z)={\prod}_{k\,=\,1}^n H_{\alpha_k}(z_k),
    \label{eq:_Definition_of_modified_Hermite_polynomials}
\end{equation}
form a complete orthonormal basis of $L^2(\real^n, \gaussian[\mu][\Sigma])$.
Note that the Hermite polynomial corresponding to a multi-index $\alpha$ depends on the orthogonal matrix $Q$ chosen.
When $\mu$ and $\Sigma$ are clear from the context, we will sometimes omit them to simplify the notation.

In addition to forming a complete orthonormal basis, the Hermite polynomials defined above are the eigenfunctions of the Ornstein-Ulhenbeck operator
\[
    -\operator{L}_{\mu,\Sigma} = \Sigma^{-1}(y-\mu) \cdot \nabla - \Delta.
\]
The eigenvalue associated to $\hermite{H}_{\alpha}(y;\mu,\Sigma)$ is given by
\begin{equation}
    \label{eq:eigenvalues_of_hermite polynomials}
    \lambda_\alpha = \sum_{i=1}^{n} \alpha_i \lambda_i,
\end{equation}
where $\seq{\lambda}{i}{1}{n}$ are the diagonal elements of $D^{-1}$.
Naturally, the operator $-\operator{L}_{\mu,\Sigma}$ is nonnegative and selfadjoint on $\lp{2}{\real^n}[\gaussian[\mu][\Sigma]]$.

Hermite polynomials have very good approximation properties for smooth functions in $\lp{2}{\real^n}[\gaussian[\mu][\Sigma]]$.
In what follows, we note $\proj{\cdot}{\poly{d}}: L^2(\real^n, \gaussian[\mu][\Sigma]) \rightarrow \poly{d}$ the $L^2(\real^n, \gaussian[\mu][\Sigma])$ projection
operator on the space of polynomials of degree less than or equal to $d$.
\begin{proposition}
    For $u \in \sobolev{s}{\real^n}[\operator{L}_{\mu,\Sigma}]$,
    \[
        \norm{u}[s][\operator{L}_{\mu,\Sigma}]^2 = \sum_{\alpha}(1 + \lambda_\alpha + \lambda_\alpha^2 + \dots + \lambda_\alpha^s) c_\alpha^2, \quad \text{ where } \quad c_\alpha = \ip{u}{\hermite H_\alpha}[\gaussian[\mu][\Sigma]].
    \]
    In addition $u \in \sobolev{s}{\real^n}[\operator{L}_{\mu,\Sigma}]$ if and only if the sum in the right-hand side converges.
    \begin{proof}
        Let $-\operator{L} = \sum_{i=0}^{s} (-\operator{L}_{\mu,\Sigma})^i$ and $\mu_\alpha = 1 + \lambda_\alpha + \lambda_\alpha^2 + \dots + \lambda_\alpha^s$.
        Assume first that $u \in \test{\real^n}$, so $-\operator{L}u \in \test{\real^n}$ also.
        Using the selfadjoint property of $-\operator{L}$, it is clear that $-\operator{L}u = \sum_{\alpha} \mu_\alpha c_\alpha \, \hermite{H}_\alpha$.
        Taking the norm and expanding the functions
        \[
            \norm{u}[s][\operator{L}_{\mu,\Sigma}]^2 = \int_{\real^n} \left(\sum_{\alpha} \mu_\alpha c_\alpha \, \hermite{H}_\alpha\right) \, \left(\sum_{\alpha} c_\alpha \hermite H_\alpha \right) \, \gaussian[\mu][\Sigma] \,dy= \sum_{\alpha} \mu_\alpha c_\alpha^2.
        \]
        We consider now the general case $u \in \sobolev{s}{\real^n}[\operator{L}_{\mu,\Sigma}]$. By definition of $\sobolev{s}{\real^n}[\operator{L}_{\mu,\Sigma}]$ there exists $\seq{u}{n}{1}{\infty} \in \test{\real^n}$ such that $\norm{u-u_n}[s][\operator{L}_{\mu,\Sigma}] \to 0$.
        By the previous equation, this means that $\sum_{\alpha} \mu_\alpha (c_{\alpha,n} - c_{\alpha,m})^2 \to 0$ when $m,n \to \infty$, where $c_{\alpha,k} = \ip{u_k}{\hermite{H}_\alpha}$, which by a completeness argument implies that $\sum_{\alpha} \mu_\alpha c_{\alpha,n}^2 \to \sum_{\alpha} \mu_\alpha c_{\alpha}^2$.

        It remains to show that if the series is convergent, then $u \in \sobolev{s}{\real^n}[\operator{L}_{\mu,\Sigma}]$.
        The main idea, for this part, is to show that the sequence $u_N = \sum_{|\alpha|\leq N} \ip{u}{\hermite H_\alpha}[\gaussian[\mu][\Sigma]] \hermite{H}_\alpha$
        is Cauchy in $\sobolev{s}{\real^n}[\operator{L}_{\mu,\Sigma}]$, which is a routine check.
    \end{proof}
\end{proposition}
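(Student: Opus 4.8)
The plan is to use the fact that the Hermite polynomials $\hermite H_\alpha$ diagonalize $-\operator{L}_{\mu,\Sigma}$ on $H := \lp{2}{\real^n}[\gaussian[\mu][\Sigma]]$, which turns the abstract norm of \cref{definition:weighted_sobolev_spaces_operator} into a weighted $\ell^2$ norm on the Hermite coefficients. Throughout, write $c_\alpha = \ip{u}{\hermite H_\alpha}[\gaussian[\mu][\Sigma]]$ and $w_\alpha = 1 + \lambda_\alpha + \dots + \lambda_\alpha^s$, so that the claimed identity reads $\norm{u}[s][\operator{L}_{\mu,\Sigma}]^2 = \sum_\alpha w_\alpha c_\alpha^2$.

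First I would prove the identity for $u \in \test{\real^n}$, which is dense in $\sobolev{s}{\real^n}[\operator{L}_{\mu,\Sigma}]$ by definition. Since $-\operator{L}_{\mu,\Sigma} = \Sigma^{-1}(y-\mu)\cdot\nabla - \Delta$ has polynomial coefficients, and multiplication by a polynomial preserves compact support, each iterate $(-\operator{L}_{\mu,\Sigma})^i u$ remains in $\test{\real^n} \subset H$, so the inner products in \cref{definition:weighted_sobolev_spaces_operator} are well defined. Expanding $u = \sum_\alpha c_\alpha \hermite H_\alpha$ and using the eigenrelation $(-\operator{L}_{\mu,\Sigma})^i \hermite H_\alpha = \lambda_\alpha^i \hermite H_\alpha$ together with orthonormality of the basis gives $\ip{(-\operator{L}_{\mu,\Sigma})^i u}{u}[\gaussian[\mu][\Sigma]] = \sum_\alpha \lambda_\alpha^i c_\alpha^2$; summing over $i = 0, \dots, s$ yields the identity in this case.

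Next I would pass to a general $u \in \sobolev{s}{\real^n}[\operator{L}_{\mu,\Sigma}]$ by density: choose $u_n \in \test{\real^n}$ with $\norm{u - u_n}[s][\operator{L}_{\mu,\Sigma}] \to 0$ and apply the identity to the differences $u_n - u_m$. This shows that the coefficient vectors $(c_{\alpha,n})_\alpha$, with $c_{\alpha,n} = \ip{u_n}{\hermite H_\alpha}[\gaussian[\mu][\Sigma]]$, form a Cauchy sequence in the weighted sequence space of vectors $(a_\alpha)$ with $\sum_\alpha w_\alpha a_\alpha^2 < \infty$. Because the $i=0$ term dominates the $H$-norm, $u_n \to u$ in $H$ and hence $c_{\alpha,n} \to c_\alpha$ for each fixed $\alpha$; completeness of the weighted sequence space identifies the limit with $(c_\alpha)_\alpha$ and yields $\norm{u}[s][\operator{L}_{\mu,\Sigma}]^2 = \lim_n \norm{u_n}[s][\operator{L}_{\mu,\Sigma}]^2 = \sum_\alpha w_\alpha c_\alpha^2$. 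This establishes the identity and, in particular, the ``only if'' part of the final claim.

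For the converse, assume $u \in H$ with $\sum_\alpha w_\alpha c_\alpha^2 < \infty$ and put $u_N = \sum_{|\alpha| \le N} c_\alpha \hermite H_\alpha$. The genuinely delicate point, which I expect to be the main obstacle, is that each $\hermite H_\alpha$ must actually belong to $\sobolev{s}{\real^n}[\operator{L}_{\mu,\Sigma}]$, since this space is the completion of $\test{\real^n}$ whereas the Hermite polynomials are not compactly supported. I would settle this by a cutoff argument: approximate $\hermite H_\alpha$ by $\chi_R \hermite H_\alpha$ with smooth cutoffs $\chi_R$ equal to $1$ on the ball of radius $R$, and check that $\norm{\hermite H_\alpha - \chi_R \hermite H_\alpha}[s][\operator{L}_{\mu,\Sigma}] \to 0$ as $R \to \infty$, where the Gaussian decay of $\gaussian[\mu][\Sigma]$ against the polynomial growth of the Hermite polynomials and their derivatives forces the tail terms to vanish. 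Once membership is known, each $u_N$ lies in $\sobolev{s}{\real^n}[\operator{L}_{\mu,\Sigma}]$, and applying the identity already established to $u_N - u_M$ gives $\norm{u_N - u_M}[s][\operator{L}_{\mu,\Sigma}]^2 = \sum_{M < |\alpha| \le N} w_\alpha c_\alpha^2 \to 0$; so $(u_N)$ is Cauchy and, by completeness of the space, converges to a limit with Hermite coefficients $c_\alpha$, namely $u$. This gives $u \in \sobolev{s}{\real^n}[\operator{L}_{\mu,\Sigma}]$ and completes the proof; the remaining verifications are routine.
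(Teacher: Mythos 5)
Your proof is correct and follows essentially the same route as the paper's: the identity for $u \in \test{\real^n}$ via selfadjointness and the eigenrelation, a density/completeness argument in the weighted sequence space for general $u$, and the converse via Cauchyness of the partial sums $u_N$ in the completion. The only place you go beyond the paper is the explicit cutoff argument showing that each $\hermite H_\alpha$ (hence each $u_N$) actually belongs to the completion $\sobolev{s}{\real^n}[\operator{L}_{\mu,\Sigma}]$ --- the step the paper compresses into ``a routine check'' --- and your treatment of that point is sound.
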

Noting that $1 + r + r^2 + \dots + r^s \leq e^{\frac{1}{r}} r^s$ for $r > 0$, and that $\lambda_\alpha \to \infty$ when $|\alpha| \to \infty$, the previous result implies that
\begin{equation}
    \label{eq:weighted_sobolev_norm_in_terms_of_eigenvalues}
    \left(c_0^2  + \sum_{|\alpha| > 0} \lambda_\alpha^s c_\alpha^2\right)  \leq \norm{u}[s][\operator{L}_{\mu,\Sigma}]^2 \leq L\left(c_0^2  + \sum_{|\alpha| > 0} \lambda_\alpha^s c_\alpha^2\right),
\end{equation}
where $L = \max_{\abs{\alpha} > 0} e^{\frac{1}{\lambda_\alpha}}$.
\begin{corollary}[Approximation by polynomials in weighted spaces]
    Let $\Sigma$ be a symmetric positive definite matrix, and suppose that $f \in \sobolev{s}{\real^n}[\operator{L}_{\mu,\Sigma}]$. Then
    \[
        \norm{f- \proj{f}{\poly{d}}}[r][\operator{L}_{\mu,\Sigma}]\, \leq \,C(\Sigma, r, s) \, {d}^{-\frac{(s-r)}{2}}\, \norm{f}[s][\operator{L}_{\mu,\Sigma}],
    \]
    for $r \in \nat$ such that $0 \leq r \leq s$.
    \begin{proof}
        See \cite[Theorem 3.1]{gagelman2012spectral}.
        From~\eqref{eq:weighted_sobolev_norm_in_terms_of_eigenvalues}, we have that
        \[
            \norm{f- \proj{f}{\poly{d}}}[r][\operator{L}_{\mu,\Sigma}]^2 \leq L \sum_{|\alpha| > d} \lambda_\alpha^r c_\alpha^2 \leq L \, M^{r-s}\sum_{\alpha \in \nat^n} \lambda_{\alpha}^{s}c_\alpha^2 \leq L M^{r-s} \norm{f}[s][\operator{L}_{\mu,\Sigma}]^2,
        \]
        with $c_\alpha = \ip{f}{\hermite{H}_\alpha}[0][\operator{L}_{\mu,\Sigma}]$ and $M = \min_{\abs{\alpha}>d} \lambda_\alpha$.
        Since $\lambda_\alpha > C(\Sigma)\, |\alpha|$, the conclusion follows.
    \end{proof}
    \label{proposition:approximation_by_hermite_polynomials}
\end{corollary}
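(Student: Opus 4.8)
The plan is to reduce the estimate to a comparison of eigenvalues by means of the spectral characterization of the norm $\norm{\cdot}[s][\operator{L}_{\mu,\Sigma}]$ recorded in the preceding proposition. First I would note that, since the Hermite polynomials $\{\hermite H_\alpha\}$ are orthonormal in $\lp{2}{\real^n}[\gaussian[\mu][\Sigma]]$ and $\poly{d} = \Span\{\hermite H_\alpha : \abs{\alpha} \leq d\}$ (because $\hermite H_\alpha$ has total degree $\abs{\alpha}$), the projection $\proj{\cdot}{\poly{d}}$ is exactly the truncation of the Hermite expansion. Writing $c_\alpha = \ip{f}{\hermite H_\alpha}[\gaussian[\mu][\Sigma]]$, this gives
\[
    f - \proj{f}{\poly{d}} = \sum_{\abs{\alpha} > d} c_\alpha \, \hermite H_\alpha,
\]
so that only the tail coefficients with $\abs{\alpha} > d$ survive.

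Next I would apply the upper bound of \eqref{eq:weighted_sobolev_norm_in_terms_of_eigenvalues} to this tail; since every surviving index satisfies $\abs{\alpha} > d \geq 0$, the $c_0$ contribution drops out and
\[
    \norm{f - \proj{f}{\poly{d}}}[r][\operator{L}_{\mu,\Sigma}]^2 \leq L \sum_{\abs{\alpha} > d} \lambda_\alpha^r \, c_\alpha^2.
\]
The arithmetic heart of the argument is then to factor $\lambda_\alpha^r = \lambda_\alpha^{r-s}\,\lambda_\alpha^s$ and extract a uniform gain. Because $r \leq s$ the exponent $r-s$ is nonpositive, so $\lambda_\alpha^{r-s} \leq M^{r-s}$ for all $\abs{\alpha} > d$, where $M = \min_{\abs{\alpha} > d} \lambda_\alpha$. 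Combining this with the lower bound of \eqref{eq:weighted_sobolev_norm_in_terms_of_eigenvalues} applied to $f$ yields
\[
    \sum_{\abs{\alpha} > d} \lambda_\alpha^r \, c_\alpha^2 \leq M^{r-s} \sum_{\abs{\alpha} > d} \lambda_\alpha^s \, c_\alpha^2 \leq M^{r-s} \, \norm{f}[s][\operator{L}_{\mu,\Sigma}]^2.
\]

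Finally I would bound $M$ from below using the explicit eigenvalues $\lambda_\alpha = \sum_{i=1}^n \alpha_i \lambda_i$ from \eqref{eq:eigenvalues_of_hermite polynomials}: since the $\lambda_i$ are the positive diagonal entries of $D^{-1}$, one has $\lambda_\alpha \geq (\min_i \lambda_i)\,\abs{\alpha} =: C(\Sigma)\,\abs{\alpha}$, hence $M \geq C(\Sigma)\,(d+1) \geq C(\Sigma)\,d$. As $r - s \leq 0$ this gives $M^{r-s} \leq C(\Sigma)^{r-s}\,d^{r-s}$, and taking square roots after collecting constants produces the stated estimate with $C(\Sigma,r,s) = \sqrt{L}\,C(\Sigma)^{(r-s)/2}$. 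I do not expect a genuine obstacle here: the whole statement is a quantitative expression of the fact that the smallest eigenvalue outside $\poly{d}$ grows linearly in $d$, and the only points demanding care will be tracking the sign of $r-s$ so that the inequality $\lambda_\alpha^{r-s} \leq M^{r-s}$ points in the correct direction, and justifying the linear lower bound $\lambda_\alpha \geq C(\Sigma)\,\abs{\alpha}$, which is immediate from the diagonalization $\Sigma = QDQ^T$.
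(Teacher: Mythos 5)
Your proposal is correct and follows essentially the same route as the paper's own proof: truncation of the Hermite expansion, the two-sided norm equivalence \eqref{eq:weighted_sobolev_norm_in_terms_of_eigenvalues}, extraction of the factor $M^{r-s}$ with $M = \min_{\abs{\alpha}>d}\lambda_\alpha$, and the linear eigenvalue lower bound $\lambda_\alpha \geq C(\Sigma)\abs{\alpha}$. The only difference is that you spell out details the paper leaves implicit (the identification of $\proj{\cdot}{\poly{d}}$ with spectral truncation and the sign bookkeeping for $r-s$), which is fine.
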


\paragraph{Hermite Functions}
\label{sec:hermite_functions}
Hermite functions can be defined from Hermite polynomials as follows:
\begin{definition}
    \label{definition:hermite_function}
    Given $\mu \in \real^n$, and $\Sigma \in \real^{n\times n}$ positive definite, we define the Hermite functions $\hermitef{h}_{\alpha}(y;\mu,\Sigma)$ by:
    \[
        \hermitef{h}_{\alpha}(y; \mu, \Sigma) = \sqrt{\gaussian[\mu][\Sigma]} \, \hermite{H}_{\alpha}(y; \mu, \Sigma) \quad \text{ for } \alpha \in \nat^n.
    \]
\end{definition}
The Hermite functions form a complete orthonormal basis of $\lp{2}{\real^n}$.
Since they are obtained from the Hermite polynomials by a multiplication with $\sqrt{\gaussian[\mu][\Sigma]}$, we immediately obtain the following:
\begin{proposition}
    Given $\mu \in \real^n$ and $\Sigma \in \real^{n\times n}$ positive definite, the Hermite functions $\hermitef{h}_{\alpha}(y;\mu,\Sigma)$ are the eigenfunctions of the operator:
    \[
        -\operator{H}_{\mu,\Sigma} = {(\gaussian[\mu][\Sigma])}^{\frac{1}{2}} \, (-\operator{L}_{\mu,\Sigma}) \, {(\gaussian[\mu][\Sigma])}^{-\frac{1}{2}} = - \Delta + \left(\frac{(y-\mu)^T \Sigma^{-2} (y-\mu)}{4} - \frac{\trace \Sigma^{-1}}{2}\right),
    \]
    with the same eigenvalues as in~\eqref{eq:eigenvalues_of_hermite polynomials}.
\end{proposition}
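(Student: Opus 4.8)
The plan is to obtain both assertions — that the Hermite functions are eigenfunctions and that $-\operator{H}_{\mu,\Sigma}$ has the claimed Schr\"odinger form — as immediate consequences of the fact that this operator is the conjugation of the Ornstein--Uhlenbeck operator $-\operator{L}_{\mu,\Sigma}$ by the unitary multiplication $(\gaussian[\mu][\Sigma])^{1/2}$. The first step I would take is to identify $(\gaussian[\mu][\Sigma])^{1/2} = e^{-V_{\mu,\Sigma}/2}$, where $V_{\mu,\Sigma}(y) = \tfrac12 (y-\mu)^T \Sigma^{-1}(y-\mu) + \log\sqrt{(2\pi)^n \det\Sigma}$ is the quadratic confining potential associated with the Gaussian density $\gaussian[\mu][\Sigma]$. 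With this identification the conjugation in the statement is precisely the reversible-to-Schr\"odinger transformation of~\eqref{def:H}, specialized to $V = V_{\mu,\Sigma}$, so that $-\operator{L}_{\mu,\Sigma} = -\Delta + \grad V_{\mu,\Sigma}\cdot\grad$ is the generator appearing there.

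For the eigenfunction property I would argue directly from \cref{definition:hermite_function}. Since $\hermitef{h}_\alpha = (\gaussian[\mu][\Sigma])^{1/2}\, \hermite{H}_\alpha$ and $-\operator{L}_{\mu,\Sigma}\hermite{H}_\alpha = \lambda_\alpha \hermite{H}_\alpha$ with $\lambda_\alpha$ given by~\eqref{eq:eigenvalues_of_hermite polynomials}, the definition $-\operator{H}_{\mu,\Sigma} = (\gaussian[\mu][\Sigma])^{1/2}(-\operator{L}_{\mu,\Sigma})(\gaussian[\mu][\Sigma])^{-1/2}$ gives
\[
    -\operator{H}_{\mu,\Sigma}\, \hermitef{h}_\alpha = (\gaussian[\mu][\Sigma])^{1/2}(-\operator{L}_{\mu,\Sigma})\hermite{H}_\alpha = \lambda_\alpha\, (\gaussian[\mu][\Sigma])^{1/2}\hermite{H}_\alpha = \lambda_\alpha\, \hermitef{h}_\alpha .
\]
Thus every $\hermitef{h}_\alpha$ is an eigenfunction of $-\operator{H}_{\mu,\Sigma}$ with the same eigenvalue $\lambda_\alpha$. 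This step is a tautology once the conjugation is written out, and it carries over the completeness and orthonormality from the Hermite polynomials in $\lp{2}{\real^n}[\gaussian[\mu][\Sigma]]$ to the Hermite functions in $\lp{2}{\real^n}$, since $(\gaussian[\mu][\Sigma])^{1/2}$ is an isometry between these spaces.

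The only genuine computation is the verification of the explicit potential. Invoking the general identity from~\eqref{def:H} with $V = V_{\mu,\Sigma}$, one has $-\operator{H}_{\mu,\Sigma} = -\Delta + W_{\mu,\Sigma}$ with $W_{\mu,\Sigma} = \tfrac14 \abs{\grad V_{\mu,\Sigma}}^2 - \tfrac12 \Delta V_{\mu,\Sigma}$. I would then compute the two ingredients by elementary matrix calculus: $\grad V_{\mu,\Sigma} = \Sigma^{-1}(y-\mu)$, whence $\abs{\grad V_{\mu,\Sigma}}^2 = (y-\mu)^T\Sigma^{-2}(y-\mu)$ using the symmetry of $\Sigma^{-1}$, and $\Delta V_{\mu,\Sigma} = \trace(\Sigma^{-1})$, since the Hessian of the quadratic part is the constant matrix $\Sigma^{-1}$ and the additive constant has vanishing Laplacian. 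Substituting these yields $W_{\mu,\Sigma} = \tfrac14 (y-\mu)^T \Sigma^{-2}(y-\mu) - \tfrac12 \trace(\Sigma^{-1})$, which is exactly the potential in the statement. I do not expect a real obstacle here: the argument is a direct specialization of the reversible-to-Schr\"odinger correspondence to a Gaussian weight, and the matrix-calculus identities are routine.
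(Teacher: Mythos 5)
Your proof is correct and takes essentially the same approach as the paper: the paper states this proposition without a separate proof, presenting it as an immediate consequence of the definition $\hermitef{h}_\alpha = \sqrt{\gaussian[\mu][\Sigma]}\,\hermite{H}_\alpha$ together with the conjugation $-\operator{H}_{\mu,\Sigma} = {(\gaussian[\mu][\Sigma])}^{1/2}(-\operator{L}_{\mu,\Sigma}){(\gaussian[\mu][\Sigma])}^{-1/2}$, which is exactly the argument you spell out for the eigenvalue identity. Your remaining verification of the explicit potential, via $V_{\mu,\Sigma}$ and the identity $W = \tfrac14\abs{\grad V}^2 - \tfrac12\Delta V$ from~\eqref{def:H}, is computed correctly and matches the specialization the paper itself performs in its implementation section, where $V_{\mu,\Sigma}$ and $W_{\mu,\Sigma}$ are introduced.
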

Hermite functions inherit the good approximation properties of the Hermite polynomials expressed in \cref{proposition:approximation_by_hermite_polynomials}.
In the following result, $\pi$ refers to the $\lp{2}{\real^n}$ projection operator, so
\begin{equation}
    \proj{f}{\sqrt{\gaussian[\mu][\Sigma]}\poly{d}} = \sum_{\abs{\alpha} \leq d} \ip{f}{h_\alpha(\cdot; \mu, \Sigma)} \, h_\alpha(\cdot; \mu, \Sigma).
\end{equation}
\begin{corollary}[Approximation by Hermite functions in flat space]
    Let $\mu \in \real^n$ and $\Sigma \in \real^{n \times n}$ be a symmetric positive definite matrix, and suppose that $f \in H^s(\real^n,\,\operator{H}_{\mu,\Sigma})$. Then
    \[
        \norm{f- \proj{f}{\sqrt{\gaussian[\mu][\Sigma]}\poly{d}}}[r][\operator{H}_{\mu,\Sigma}]\, \leq \,C(\Sigma, r, s) \, {d}^{-\frac{(s-r)}{2}}\, \norm{f}[s][\operator{H}_{\mu,\Sigma}],
    \]
    for any $r \in \nat$ such that $0 \leq r \leq s$.
\label{corollary:approximation_by_hermite_functions}
\end{corollary}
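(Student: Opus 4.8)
The plan is to transfer the statement back to the polynomial setting of \cref{proposition:approximation_by_hermite_polynomials} by means of the unitary map relating Hermite functions to Hermite polynomials. Recall from the construction of the operator $\operator H_{\mu,\Sigma}$ that multiplication by $(\gaussian[\mu][\Sigma])^{1/2}$ conjugates the Ornstein--Uhlenbeck operator $\operator L_{\mu,\Sigma}$ to $\operator H_{\mu,\Sigma}$, exactly as $e^{-V/2}$ conjugates $\operator L$ to $\operator H$. By the same density and conjugation argument used to show that $e^{-V/2}$ is unitary from $\sobolev{s}{\real^n}[\operator L]$ onto $\sobolev{s}{\real^n}[\operator H]$, the multiplication operator $(\gaussian[\mu][\Sigma])^{1/2}$ is unitary from $\sobolev{s}{\real^n}[\operator L_{\mu,\Sigma}]$ onto $\sobolev{s}{\real^n}[\operator H_{\mu,\Sigma}]$ for every $s \in \nat$.

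First I would set $\tilde f = (\gaussian[\mu][\Sigma])^{-1/2} f$, so that by the unitarity just recalled $\tilde f \in \sobolev{s}{\real^n}[\operator L_{\mu,\Sigma}]$ with $\norm{\tilde f}[s][\operator L_{\mu,\Sigma}] = \norm{f}[s][\operator H_{\mu,\Sigma}]$. Next I would identify the two projection operators. Expanding $f$ in the Hermite function basis, the $\lp{2}{\real^n}$ coefficients $\ip{f}{\hermitef h_\alpha}[0]$ satisfy $\ip{f}{\hermitef h_\alpha}[0] = \ip{\tilde f}{\hermite H_\alpha}[\gaussian[\mu][\Sigma]]$, since $\hermitef h_\alpha = \sqrt{\gaussian[\mu][\Sigma]}\,\hermite H_\alpha$ and the weight $\gaussian[\mu][\Sigma]$ is absorbed when passing between the two inner products. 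Consequently the truncated expansions coincide after conjugation, giving
\[
    \proj{f}{\sqrt{\gaussian[\mu][\Sigma]}\poly{d}} = \sqrt{\gaussian[\mu][\Sigma]}\, \proj{\tilde f}{\poly{d}}.
\]

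It then follows that the approximation error is the image under the unitary map of the polynomial approximation error for $\tilde f$, namely $f - \proj{f}{\sqrt{\gaussian[\mu][\Sigma]}\poly{d}} = \sqrt{\gaussian[\mu][\Sigma]}\,(\tilde f - \proj{\tilde f}{\poly{d}})$, whence
\[
    \norm{f - \proj{f}{\sqrt{\gaussian[\mu][\Sigma]}\poly{d}}}[r][\operator H_{\mu,\Sigma}] = \norm{\tilde f - \proj{\tilde f}{\poly{d}}}[r][\operator L_{\mu,\Sigma}].
\]
Finally I would apply \cref{proposition:approximation_by_hermite_polynomials} to $\tilde f$ to bound the right-hand side by $C(\Sigma,r,s)\, d^{-(s-r)/2}\,\norm{\tilde f}[s][\operator L_{\mu,\Sigma}]$, and substitute $\norm{\tilde f}[s][\operator L_{\mu,\Sigma}] = \norm{f}[s][\operator H_{\mu,\Sigma}]$ to conclude. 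The only genuinely nontrivial point is the identification of the $\lp{2}{\real^n}$ projection onto $\sqrt{\gaussian[\mu][\Sigma]}\,\poly{d}$ with the image of the weighted projection onto $\poly{d}$; I expect this to be the main thing to verify carefully, although it reduces to the equality of Fourier coefficients noted above, after which the remainder is bookkeeping with the unitary equivalence.
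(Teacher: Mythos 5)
Your proposal is correct and follows essentially the same route the paper intends: the paper presents \cref{corollary:approximation_by_hermite_functions} as an immediate consequence of \cref{proposition:approximation_by_hermite_polynomials}, obtained by conjugating with the multiplication operator $(\gaussian[\mu][\Sigma])^{1/2}$, which is unitary from $\sobolev{s}{\real^n}[\operator{L}_{\mu,\Sigma}]$ onto $\sobolev{s}{\real^n}[\operator{H}_{\mu,\Sigma}]$ and carries the weighted projection onto $\poly{d}$ to the $\lp{2}{\real^n}$ projection onto $\sqrt{\gaussian[\mu][\Sigma]}\,\poly{d}$. Your write-up merely makes explicit the two points the paper leaves implicit, namely the equality of the Fourier coefficients $\ip{f}{\hermitef{h}_\alpha}[0] = \ip{\tilde f}{\hermite{H}_\alpha}[\gaussian[\mu][\Sigma]]$ and the preservation of the scale of norms under this unitary map.
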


\nocite*
\bibliographystyle{abbrv}
\bibliography{references}

\def\cprime{$'$} \def\cprime{$'$} \def\cprime{$'$}
\begin{thebibliography}{10}

\bibitem{abdulle2012heterogeneous}
A.~Abdulle, W.~E, B.~Engquist, and E.~Vanden-Eijnden.
\newblock The heterogeneous multiscale method.
\newblock {\em Acta Numer.}, 21:1--87, 2012.

\bibitem{abdulle2012numerical}
A.~Abdulle and G.~A. Pavliotis.
\newblock Numerical methods for stochastic partial differential equations with
  multiple scales.
\newblock {\em J. Comput. Phys.}, 231(6):2482--2497, Mar. 2012.

\bibitem{agmon2014lectures}
S.~Agmon.
\newblock {\em Lectures on exponential decay of solutions of second-order
  elliptic equations: bounds on eigenfunctions of {$N$}-body {S}chr\"odinger
  operators}, volume~29 of {\em Mathematical Notes}.
\newblock Princeton University Press, Princeton, NJ; University of Tokyo Press,
  Tokyo, 1982.

\bibitem{andrews2000special}
G.~E. Andrews, R.~Askey, and R.~Roy.
\newblock {\em Special functions}, volume~71 of {\em Encyclopedia of
  Mathematics and its Applications}.
\newblock Cambridge University Press, Cambridge, 1999.

\bibitem{arnold2007interpolation}
A.~Arnold, J.-P. Bartier, and J.~Dolbeault.
\newblock Interpolation between logarithmic {S}obolev and {P}oincar\'e
  inequalities.
\newblock {\em Commun. Math. Sci.}, 5(4):971--979, 2007.

\bibitem{beckner1989generalized}
W.~Beckner.
\newblock A generalized {P}oincar\'e inequality for {G}aussian measures.
\newblock {\em Proc. Amer. Math. Soc.}, 105(2):397--400, 1989.

\bibitem{blomker2007}
D.~Bl{\"o}mker, M.~Hairer, and G.~A. Pavliotis.
\newblock Multiscale analysis for stochastic partial differential equations
  with quadratic nonlinearities.
\newblock {\em Nonlinearity}, 20(7):1721--1744, 2007.

\bibitem{MR3443169}
V.~I. Bogachev, N.~V. Krylov, M.~R{\"o}ckner, and S.~V. Shaposhnikov.
\newblock {\em Fokker-{P}lanck-{K}olmogorov equations}, volume 207 of {\em
  Mathematical Surveys and Monographs}.
\newblock American Mathematical Society, Providence, RI, 2015.

\bibitem{goudonefficient}
V.~Bonnaillie-Noël, J.~A. Carrillo, T.~Goudon, and G.~A. Pavliotis.
\newblock Efficient numerical calculation of drift and diffusion coefficients
  in the diffusion approximation of kinetic equations.
\newblock {\em IMA Journal of Numerical Analysis}, 2016.

\bibitem{boyd1984asymptotic}
J.~P. Boyd.
\newblock Asymptotic coefficients of {H}ermite function series.
\newblock {\em J. Comput. Phys.}, 54(3):382--410, 1984.

\bibitem{brehier2013}
C.-E. Br{\'e}hier.
\newblock Analysis of an {HMM} time-discretization scheme for a system of
  stochastic {PDE}s.
\newblock {\em SIAM J. Numer. Anal.}, 51(2):1185--1210, 2013.

\bibitem{carillo2010}
J.~A. Carrillo, M.~Fornasier, G.~Toscani, and F.~Vecil.
\newblock Particle, kinetic, and hydrodynamic models of swarming.
\newblock In {\em Mathematical modeling of collective behavior in
  socio-economic and life sciences}, Model. Simul. Sci. Eng. Technol., pages
  297--336. Birkh\"auser Boston, Inc., Boston, MA, 2010.

\bibitem{MR2738765}
C.~Cuthbertson, G.~A. Pavliotis, A.~Rafailidis, and P.~Wiberg.
\newblock Asymptotic analysis for foreign exchange derivatives with stochastic
  volatility.
\newblock {\em Int. J. Theor. Appl. Finance}, 13(7):1131--1147, 2010.

\bibitem{da2008stochastic}
G.~Da~Prato and J.~Zabczyk.
\newblock {\em Stochastic equations in infinite dimensions}, volume 152 of {\em
  Encyclopedia of Mathematics and its Applications}.
\newblock Cambridge University Press, Cambridge, second edition, 2014.

\bibitem{DuncanKalliadasisPavliotisPradas2016}
A.~B. Duncan, S.~Kalliadasis, G.~A. Pavliotis, and M.~Pradas.
\newblock Noise-induced transitions in rugged energy landscapes.
\newblock {\em Phys. Rev. E}, 94:032107, Sep 2016.

\bibitem{weinan2011}
W.~E.
\newblock {\em Principles of multiscale modeling}.
\newblock Cambridge University Press, Cambridge, 2011.

\bibitem{weinan2003heterognous}
W.~E, B.~Engquist, X.~Li, W.~Ren, and E.~Vanden-Eijnden.
\newblock Heterogeneous multiscale methods: a review.
\newblock {\em Commun. Comput. Phys.}, 2(3):367--450, 2007.

\bibitem{evans2010partial}
L.~C. Evans.
\newblock {\em Partial differential equations}, volume~19 of {\em Graduate
  Studies in Mathematics}.
\newblock American Mathematical Society, Providence, RI, second edition, 2010.

\bibitem{gagelman2012spectral}
J.~Gagelman and H.~Yserentant.
\newblock A spectral method for {S}chr\"odinger equations with smooth
  confinement potentials.
\newblock {\em Numer. Math.}, 122(2):383--398, 2012.

\bibitem{MR1669959}
T.~Gerstner and M.~Griebel.
\newblock Numerical integration using sparse grids.
\newblock {\em Numer. Algorithms}, 18(3-4):209--232, 1998.

\bibitem{gol2009weighted}
V.~Gol{\cprime}dshtein and A.~Ukhlov.
\newblock Weighted {S}obolev spaces and embedding theorems.
\newblock {\em Trans. Amer. Math. Soc.}, 361(7):3829--3850, 2009.

\bibitem{gottlieb1977numerical}
D.~Gottlieb and S.~A. Orszag.
\newblock {\em Numerical analysis of spectral methods: theory and
  applications}.
\newblock Society for Industrial and Applied Mathematics, Philadelphia, Pa.,
  1977.
\newblock CBMS-NSF Regional Conference Series in Applied Mathematics, No. 26.

\bibitem{higham2002strong}
D.~J. Higham, X.~Mao, and A.~M. Stuart.
\newblock Strong convergence of {E}uler-type methods for nonlinear stochastic
  differential equations.
\newblock {\em SIAM J. Numer. Anal.}, 40(3):1041--1063 (electronic), 2002.

\bibitem{HislopSigal1996}
P.~D. Hislop and I.~M. Sigal.
\newblock {\em Introduction to spectral theory}, volume 113 of {\em Applied
  Mathematical Sciences}.
\newblock Springer-Verlag, New York, 1996.
\newblock With applications to Schr{\"o}dinger operators.

\bibitem{kaarnioja2013smolyak}
V.~Kaarnioja.
\newblock Smolyak quadrature.
\newblock 2013.

\bibitem{MR1121940}
I.~Karatzas and S.~E. Shreve.
\newblock {\em Brownian motion and stochastic calculus}, volume 113 of {\em
  Graduate Texts in Mathematics}.
\newblock Springer-Verlag, New York, second edition, 1991.

\bibitem{kevrekidis2003equation}
I.~G. Kevrekidis, C.~W. Gear, J.~M. Hyman, P.~Kevrekidis, O.~Runborg, and
  C.~Theodoropoulos.
\newblock Equation-free, coarse-grained multiscale computation: enabling
  microscopic simulators to perform system-level analysis.
\newblock {\em Commun. Math. Sci.}, 1(4):715--762, 2003.

\bibitem{kloeden1992numerical}
P.~E. Kloeden and E.~Platen.
\newblock {\em Numerical solution of stochastic differential equations},
  volume~23 of {\em Applications of Mathematics (New York)}.
\newblock Springer-Verlag, Berlin, 1992.

\bibitem{MR3463433}
B.~Leimkuhler, C.~Matthews, and G.~Stoltz.
\newblock The computation of averages from equilibrium and nonequilibrium
  {L}angevin molecular dynamics.
\newblock {\em IMA J. Numer. Anal.}, 36(1):13--79, 2016.

\bibitem{MR2681239}
T.~Leli{\`e}vre, M.~Rousset, and G.~Stoltz.
\newblock {\em Free energy computations}.
\newblock Imperial College Press, London, 2010.
\newblock A mathematical perspective.

\bibitem{MR3509213}
T.~Leli{\`e}vre and G.~Stoltz.
\newblock Partial differential equations and stochastic methods in molecular
  dynamics.
\newblock {\em Acta Numer.}, 25:681--880, 2016.

\bibitem{LelievreStoltz2016}
T.~Leli{\`e}vre and G.~Stoltz.
\newblock Partial differential equations and stochastic methods in molecular
  dynamics.
\newblock {\em Acta Numer.}, 25:681--880, 2016.

\bibitem{abdulle2008effectiveness}
T.~Li, A.~Abdulle, and W.~E.
\newblock Effectiveness of implicit methods for stiff stochastic differential
  equations.
\newblock {\em Commun. Comput. Phys.}, 3(2):295--307, 2008.

\bibitem{lorenzi2006analytical}
L.~Lorenzi and M.~Bertoldi.
\newblock {\em Analytical methods for {M}arkov semigroups}, volume 283 of {\em
  Pure and Applied Mathematics (Boca Raton)}.
\newblock Chapman \& Hall/CRC, Boca Raton, FL, 2007.

\bibitem{MTV01}
A.~Majda, I.~Timofeyev, and E.~V. Eijnden.
\newblock A mathematical framework for stochastic climate models.
\newblock {\em Comm. Pure Appl. Math.}, 54(8):891--974, 2001.

\bibitem{mao2007stochastic}
X.~Mao.
\newblock {\em Stochastic differential equations and applications}.
\newblock Horwood Publishing Limited, Chichester, second edition, 2008.

\bibitem{markowich2000}
P.~A. Markowich and C.~Villani.
\newblock On the trend to equilibrium for the {F}okker-{P}lanck equation: an
  interplay between physics and functional analysis.
\newblock {\em Mat. Contemp.}, 19:1--29, 2000.
\newblock VI Workshop on Partial Differential Equations, Part II (Rio de
  Janeiro, 1999).

\bibitem{metafune2002spectrum}
G.~Metafune, D.~Pallara, and E.~Priola.
\newblock Spectrum of {O}rnstein-{U}hlenbeck operators in {$L^p$} spaces with
  respect to invariant measures.
\newblock {\em J. Funct. Anal.}, 196(1):40--60, 2002.

\bibitem{MiT04}
G.~N. Milstein and M.~V. Tretyakov.
\newblock {\em Stochastic numerics for mathematical physics}.
\newblock Scientific Computation. Springer-Verlag, Berlin, 2004.

\bibitem{pardoux2001poisson}
E.~Pardoux and A.~Y. Veretennikov.
\newblock On the {P}oisson equation and diffusion approximation. {I}.
\newblock {\em Ann. Probab.}, 29(3):1061--1085, 2001.

\bibitem{pardoux2003poisson}
E.~Pardoux and A.~Y. Veretennikov.
\newblock On {P}oisson equation and diffusion approximation. {II}.
\newblock {\em Ann. Probab.}, 31(3):1166--1192, 2003.

\bibitem{pardoux2005poisson}
E.~Pardoux and A.~Y. Veretennikov.
\newblock On the {P}oisson equation and diffusion approximation. {III}.
\newblock {\em Ann. Probab.}, 33(3):1111--1133, 2005.

\bibitem{MR2740040}
G.~A. Pavliotis.
\newblock Asymptotic analysis of the {G}reen-{K}ubo formula.
\newblock {\em IMA J. Appl. Math.}, 75(6):951--967, 2010.

\bibitem{pavliotis2011applied}
G.~A. Pavliotis.
\newblock {\em Stochastic processes and applications}, volume~60 of {\em Texts
  in Applied Mathematics}.
\newblock Springer, New York, 2014.
\newblock Diffusion processes, the Fokker-Planck and Langevin equations.

\bibitem{pavliotis2008multiscale}
G.~A. Pavliotis and A.~M. Stuart.
\newblock {\em Multiscale methods}, volume~53 of {\em Texts in Applied
  Mathematics}.
\newblock Springer, New York, 2008.
\newblock Averaging and homogenization.

\bibitem{reed1978analysis}
M.~Reed and B.~Simon.
\newblock {\em Methods of modern mathematical physics. {IV}. {A}nalysis of
  operators}.
\newblock Academic Press [Harcourt Brace Jovanovich, Publishers], New
  York-London, 1978.

\bibitem{shen2009some}
J.~Shen and L.-L. Wang.
\newblock Some recent advances on spectral methods for unbounded domains.
\newblock {\em Commun. Comput. Phys.}, 5(2-4):195--241, 2009.

\bibitem{tang1993hermite}
T.~Tang.
\newblock The {H}ermite spectral method for {G}aussian-type functions.
\newblock {\em SIAM J. Sci. Comput.}, 14(3):594--606, May 1993.

\bibitem{mres_urbain}
U.~Vaes.
\newblock {Numerical solution of multiscale stochastic equations}.
\newblock Master's thesis, Department of Mathematics, Imperial College London,
  2014.

\bibitem{vanden2003fast}
E.~Vanden-Eijnden.
\newblock Numerical techniques for multi-scale dynamical systems with
  stochastic effects.
\newblock {\em Commun. Math. Sci.}, 1(2):385--391, 2003.

\bibitem{villani2006hypocoercivity}
C.~Villani.
\newblock Hypocoercivity.
\newblock {\em Mem. Amer. Math. Soc.}, 202(950):iv+141, 2009.

\bibitem{weinan2005analysis}
E.~Weinan, D.~Liu, and E.~Vanden-Eijnden.
\newblock Analysis of multiscale methods for stochastic differential equations.
\newblock {\em Comm. Pure Appl. Math.}, 58(11):1544--1585, 2005.

\bibitem{MR2656512}
H.~Yserentant.
\newblock {\em Regularity and approximability of electronic wave functions},
  volume 2000 of {\em Lecture Notes in Mathematics}.
\newblock Springer-Verlag, Berlin, 2010.

\end{thebibliography}

\end{document}